\newtheorem{theorem}{Theorem}[section]
\newtheorem{lemma}[theorem]{Lemma}
\newtheorem{proposition}[theorem]{Proposition}
\newtheorem{corollary}[theorem]{Corollary}
\newtheorem*{claim*}{\textsc{Claim}}
\theoremstyle{definition}
\newtheorem{definition}[theorem]{Definition}
\newtheorem{example}[theorem]{Example}
\newtheorem{remark}[theorem]{\textbf{Remark}}
\definecolor{blue-url}{RGB}{0,0,100}
\definecolor{yellow-highlight}{RGB}{255,249,193}
\definecolor{dark-green}{RGB}{48,104,68}
\DeclareMathSymbol{\widehatsym}{\mathord}{largesymbols}{"62}
\renewcommand{\bf}{\mathbf}
\renewcommand{\emptyset}{\varnothing}
\renewcommand{\setminus}{\smallsetminus}
\newcommand{\m}{\mathsf{m}}
\providecommand{\AAc}{\mathscr{A}}
\providecommand{\CCc}{\mathscr{C}}
\providecommand{\LLc}{\mathscr{L}}
\providecommand{\NNb}{{\mathbf{N}}}
\providecommand{\ord}{{\rm ord}}
\providecommand{\PPc}{\mathcal{P}}
\providecommand{\ZZb}{\mathbf{Z}}
\providecommand{\ZZc}{\mathcal{Z}}
\providecommand\llb{\llbracket}
\providecommand\rrb{\rrbracket}
\providecommand\fin{{\rm fin}}
\providecommand\funt{{{\rm fin},\times}}
\providecommand\fun{{{\rm fin},1}}
\newcommand{\gen}[1]{\langle #1 \rangle}
\newcommand\circledot{%
  \mathop{\ooalign{\hss$\cdot$\hss\cr%
  \kern0.03ex\raise0.15ex\hbox{\scalebox{0.75}{$\circ$}}}}}
\newcommand\circleplus{%
  \mathop{\ooalign{\hss$+$\hss\cr%
  \kern0.468ex\raise0.165ex\hbox{\scalebox{0.75}{$\circ$}}}}}
\newcommand{\fixed}[2][1]{%
  \begingroup
  \spaceskip=#1\fontdimen2\font minus \fontdimen4\font
  \xspaceskip=0pt\relax
  #2%
  \endgroup
}
\begin{document}
\title{On the Arithmetic of Power Monoids \\ and Sumsets in Cyclic Groups}

\author{Austin A.~Antoniou}
\address{Department of Mathematics, The Ohio State University | 231 W.~18th Avenue, Columbus, OH 43202, USA}
\email{antoniou.6@osu.edu}
\urladdr{https://people.math.osu.edu/antoniou.6/}
\author{Salvatore Tringali}
\address{Institute for Mathematics and Scientific Computing, University of Graz, NAWI Graz | Heinrichstr.~36, 8010 Graz, Austria}
\curraddr{School of Mathematical Sciences, Hebei Normal University | No.~20 Road East, 2nd Southern Ring, Yuhua District | Shijiazhuang, Hebei, 050024 China}
\email{salvo.tringali@gmail.com}
\urladdr{https://imsc.uni-graz.at/tringali}

\subjclass[2010]{Primary 11B30, 11P70, 20M13. Secondary 11B13.}

\keywords{BF-monoids, decompositions into atoms, irreducibles, minimal factorizations, non-unique factorization, power monoids, product sets, sumsets.}
\thanks{
	A.A. was supported by a Rhodus Graduate Fellowship from the Department of Mathematics at the Ohio State University, and S.T. by the Austrian Science Fund (FWF), Project No.~M 1900-N39.}
\begin{abstract}
	\noindent{}Let $H$ be a multiplicatively written monoid with identity $1_H$ (in particular, a group), and denote by $\mathcal P_{\rm fin,\times}(H)$ the monoid obtained by endowing the collection of all finite subsets of $H$ containing a unit with the operation of setwise multiplication $(X,Y) \mapsto \{xy: x \in X, y \in Y\}$. We study fundamental features of the arithmetic of this and related structures, with a focus on the submonoid, $\mathcal P_{\text{fin},1}(H)$, of $\mathcal P_{\text{fin},\times}(H)$ consisting of all finite subsets of $H$ containing the identity.
	
	Among others, we establish that $\mathcal{P}_{\text{fin},1}(H)$ is atomic (i.e., each non-unit is a product of atoms) if and only if $1_H \ne x^2 \ne x$ for every $x \in H \setminus \{1_H\}$.
Then we prove that $\mathcal{P}_{\text{fin},1}(H)$ is BF (i.e., it is atomic and every element has factorizations of bounded length) if and only if $H$ is torsion-free; and we show how to transfer these conclusions from $\mathcal P_{\text{fin},1}(H)$ to $\mathcal P_{\text{fin},\times}(H)$ through the machinery of equimorphisms.
	
Next, we introduce a suitable notion of ``minimal factorization'' (and investigate its behavior with respect to equimorphisms) to account for the fact that monoids may have non-trivial idempotents, in which case standard definitions from Factorization Theory degenerate. Accordingly, we obtain necessary and sufficient conditions for $\mathcal P_{\text{fin},\times}(H)$ to be BmF (meaning that each non-unit has at least one minimal factorization and all such factorizations are bounded in length); and for $\mathcal{P}_{\text{fin},1}(H)$ to be BmF, HmF (i.e., a BmF-monoid where all the minimal factorizations of a given element have the same length), or minimally factorial (i.e., a BmF-monoid where each non-unit element has an essentially unique minimal factorization). Finally, we prove how to realize certain intervals as sets of minimal lengths in $\mathcal P_{\text{fin},1}(H)$.
	
Many proofs come down to considering sumset decompositions in cyclic groups, so giving rise to an intriguing interplay with Arithmetic Combinatorics.
\end{abstract}

\maketitle
\thispagestyle{empty}

\section{Introduction}\label{sec:intro}
By and large, Factorization Theory is about generalizations of the Fundamental Theorem of Arithmetic from the integers to other settings: Slightly more precisely, it can be understood as the study of certain properties of rings and, more generally, monoids where factorization of elements into ``irreducibles'' need not be unique, see the proceedings \cite{And97,ChGl00,Ch05,ChFoGeOb16}, the surveys \cite{BaCh11,BaWi13,Ge16c}, or the volumes \cite{FoHoLu13,GeHK06}.
While the focus has been so far on integral domains and commutative cancellative monoids, the field has recently witnessed an increasing interest for settings where cancellativity or commutativity may not be satisfied, see \cite{BaGe14, FTr17, GeZh19, Sm19, Tr18} and references therein.
In the present work, we further contribute to this line of research, by inquiring into the arithmetic of a new class of ``highly non-cancellative'' monoids recently introduced in \cite{FTr17} to serve as a bridge between Factorization Theory and Arithmetic Combinatorics, with emphasis on the ``structural theory'' of sumsets and product sets in groups \cite{Na96,TaVu06,GeRu09,Gry13}.

More in detail, let $H$ be a monoid (in particular, a group) with identity $1_H$ (see \S{ }\ref{sec:fundamentals} for basic notation and terminology). The set of all non-empty finite subsets of $H$ is then also a monoid, denoted by $\mathcal P_\fin(H)$ and called the \emph{power monoid} of $H$, when endowed with the operation of setwise multiplication
\[
(X,Y) \mapsto XY := \{xy: x \in X,\, y \in Y\}.
\]
Moreover, the set $\mathcal P_\funt(H)$ of all finite subsets of $H$ containing a unit is a submonoid  of $\mathcal P_\fin(H)$, and so is the set $\mathcal P_\fun(H)$ of all finite subsets of $H$ containing $1_H$: We will refer to the former as the \emph{restricted power monoid} of $H$, and to the latter as the \emph{reduced power monoid} of $H$.

After recalling some central ideas and objects from Factorization Theory in \S{ }\ref{sec:fundamentals}, we begin considering power monoids in \S{ }\ref{sec:atomicity}.
We explore the interplay between the restricted and the reduced power monoids, and we conclude that, under mild assumptions on $H$, their arithmetic is essentially the same.
Specifically, we give necessary and sufficient conditions under which $\mathcal P_{\fin,1}(H)$ is atomic (Theorem \ref{th:atomicity}); and by a combinatorial argument simply based on the Pigeonhole Principle, we conclude that $\PPc_\fun(H)$ is BF if and only if $H$ is torsion-free (parts \ref{it:thm:BF-torsion(i)} and \ref{it:thm:BF-torsion(ii)} of Theorem \ref{thm:BF-torsion}). Then we discuss how to transfer these conclusions from $\mathcal P_{\text{fin},1}(H)$ to $\mathcal P_{\text{fin},\times}(H)$ through the machinery of equimorphisms (Definition \ref{def:equimorphism}), under the hypothesis that $H$ is Dedekind-finite (Proposition \ref{prop:funt&fun-have-the-same-system-of-lengths}, Example \ref{exa:no-dedekind-finiteness}, and Theorem \ref{thm:BF-torsion}\ref{it:thm:BF-torsion(iii)}).

In \S{ }\ref{sec:minimal-factorizations}, we address a well-known limitation of Factorization Theory as developed in the classical setting: Finer arithmetic properties (e.g., boundedness of factorization lengths) are completely precluded by the existence of non-trivial idempotent elements. In the setting of power monoids this issue is nearly unavoidable (Example \ref{unbounded-fzn}).
We are thus motivated to introduce \emph{minimal factorizations} (Definitions \ref{def:preorder} and \ref{def:min-fac}), that is, a refinement of the classical notion of factorization which, on the one hand, 
circumvents the problem and, on the other hand, is no different from the usual notion in the commutative, cancellative setting (Proposition \ref{prop:min-basics}\ref{it:prop:min-basics(iv)}).

In particular, we first investigate how minimal factorizations behave with respect to equimorphisms (Propositions \ref{prop:min-equi} and \ref{prop:comm-pm} and Example \ref{exa:strict-inclusion}). Then we obtain necessary and sufficient conditions for $\mathcal P_{\fin,1}(H)$ to be BmF (meaning that the monoid is atomic and the minimal factorizations of a fixed element are all bounded in length), HmF (i.e., a BmF-monoid where all the minimal factorizations of a given element have the same length), or minimally factorial (i.e., a BmF-monoid where each non-unit element has an essentially unique minimal factorization); and for $\mathcal{P}_{\text{fin},\times}(H)$ to be BmF (Theorems \ref{BmF-char} and \ref{prop:HF-exp-3} and Corollary \ref{cor:when-reduced-pm-is-minimally-factorial}).
Finally in \S{ }\ref{sec:cyclic-case}, we focus on $\PPc_\fun(H)$ for the special case when $H$ is a finite cyclic group or is isomorphic to the additive monoid of non-negative integers.
This allows us to show (Theorem \ref{th:interval-lengths}) that, for general $H$, certain intervals can be always realized as ``sets of minimal lengths'' in $\mathcal P_{\fin,1}(H)$.

\section{Preliminaries}
\label{sec:fundamentals}
In this short section, we fix some definitions that we will need as we inquire into the algebraic and arithmetic structure of power monoids.
\subsection{Generalities}\label{subsec:generalities}
Unless noted otherwise, we reserve the letters $\ell$, $m$, and $n$ (with or without subscripts) for positive integers; and the letters $i$, $j$, and $k$ for non-negative integers.

We use $\mathbf N$ for the natural numbers (in particular, $0 \in \mathbf N$), and
for $a, b \in \mathbf R \cup \{\infty\}$ we define $\llb a, b \rrb := \mathbf Z \cap [a,b]$.
If $X$ is a set and $\mathcal{E}$ an equivalence on $X$, we denote by $\mathcal P(X)$ the power set of $X$ and by $\llb x \rrb_{\mathcal{E}}$ the class of an element $x \in X$ in the quotient $X/\mathcal{E}$.

When $n$ is understood from context, $\overline{k}$ will denote the residue class of $k$ modulo $n$.
Occasionally, we will also need to lift residues modulo $n$ back to the integers. So, if $a\in \ZZb/n\ZZb$ and there is no risk of confusion, we set $\hat{a} := \min (a \cap \mathbf N) \in \llb 0, n-1 \rrb$, and for $A\subseteq \ZZb/n\ZZb$ we define $\hat{A} := \{\hat{a}:a\in A \}$.

Given a set $\mathscr U$, we denote by $\mathscr{F}(\mathscr U)$ the free monoid with basis $\mathscr U$, and by $\varepsilon_\mathscr{U}$ the identity of $\mathscr{F}(\mathscr U)$. We assume $\mathscr U \subseteq \mathscr{F}(\mathscr U)$, and adopt the symbol $\ast$ for the operation of $\mathscr{F}(\mathscr U)$. We call an element of $\mathscr{F}(\mathscr U)$ a $\mathscr U$-word, and $\varepsilon_\mathscr{U}$ the empty $\mathscr U$-word. 
	For all $\mathfrak z \in \mathscr{F}(\mathscr U)$ and $n \in \mathbf N^+$, we take $\mathfrak z^{\ast 0} := \varepsilon_\mathscr{U}$, $\mathfrak z^{\ast 1} := \mathfrak z$, and $\mathfrak z^{\ast n} := \mathfrak z^{\ast(n-1)} \ast \mathfrak z$; and we define the \emph{word length}, $\|\mathfrak z\|_\mathscr{U}$, of $\mathfrak z$ (relative to the basis $\mathscr U$) as follows: If $\mathfrak z = \varepsilon_\mathscr{U}$, then $\|\mathfrak z\|_\mathscr{U} := 0$; otherwise, there are (uniquely) de\-ter\-mined $z_1, \ldots, z_n \in \mathscr U$ such that $\mathfrak z = z_1 \ast \cdots \ast z_n$, and we let $\|\mathfrak z\|_\mathscr{U} := n$.
	Lastly, we say a $\mathscr U$-word $\mathfrak y$ is a \emph{subword} of a $\mathscr U$-word $\mathfrak z$ if $\mathfrak y = \varepsilon_\mathscr{U}$, or $\mathfrak z = z_1 \ast \cdots \ast z_n$ and $\mathfrak y = z_{i_1} \ast \cdots \ast z_{i_m}$ for some $i_1, \ldots, i_m \in \llb 1, n \rrb$ with $i_j < i_{j+1}$ for each $j \in \llb 1, m-1 \rrb$.

Further terminology and notations, if not explained, are standard or should be clear from the context.

\subsection{Basic definitions for monoids}
\label{subsec:basics-on-monoids}
Here and later, monoids will be usually written multiplicatively and, unless differently specified, need not have any special property (e.g., commutativity).

Let $H$ be a monoid with identity $1_H$. We denote by $H^\times$ and $\mathscr{A}(H)$, respectively, the \emph{set of units} (or \emph{invertible elements}) and the \emph{set of atoms} of $H$, where $a \in H$ is an atom if $a \notin H^\times$ and there do not exist $x, y\in H \setminus H^\times$ such that $a = xy$.
We say that $H$ is \emph{reduced} if $H^\times = \{1_H\}$; \emph{cancellative} if $xz = yz$ or $zx = zy$, for some $x, y, z \in H$, implies $x = y$; \emph{Dedekind-finite} if there do not exist $x, y \in H$ with $xy = 1_H \ne yx$; and \emph{unit-cancellative}
provided that $xy \ne x \ne yx$ for all $x, y \in H$ with $y \notin H^\times$. 

Given $x, y \in H$, we
write $x \mid_H y$, read ``$x$ divides $y$ (in $H$)'', if $y \in HxH$; and $x \simeq_H y$, read ``$x$ is \emph{associate} to $y$ (in $H$)'', if $y \in H^\times x H^\times$.
We set $\langle x \rangle_H := \{x^n: n \in \mathbf N^+\}$ and let $\ord_H(x)$ be the \emph{order} of $x$ (relative to $H$), that is, the cardinality of $\langle x \rangle_H$ (when $x$ is a unit, this coincides with the common group-theoretic sense of ``order'').
Lastly, we call $x$ an \emph{idempotent} element (of $H$) if $x^2=x$, and we take a submonoid $M$ of $H$ to be \emph{divisor-closed} if $x \in M$ whenever $x \mid_H y$ and $y \in M$.

It is perhaps worth mentioning that there are, in principle, several options for a definition of ``$x$ divides $y$'' (and hence of ``divisor-closedness'') in a non-commutative setting, cf. \cite[\S{ }5]{BaSm15}.
The definition we are choosing here is consistent with \cite{FTr17} and fits very well with our approach to the study of factorization in a setting that is not only non-commutative, but also non-cancellative.
\subsection{Factorizations and lengths}\label{subsec:factorizations}
We let the \emph{factorization homomorphism} of $H$ be the unique (monoid) homomorphism $\pi_H: \mathscr{F}(H) \to H$ such that $\pi_H(x) = x$ for all $x \in H$, and we write $\mathscr{C}_H$ for the smallest monoid con\-gru\-ence on $\mathscr{F}(\mathscr{A}(H))$ for which the following holds:
\begin{enumerate}[label={$\bullet$}]
	\item If $\mathfrak a = a_1 \ast \cdots \ast a_m$ and $\mathfrak b = b_1 \ast \cdots \ast b_n$ are, re\-spec\-tive\-ly, non-empty $\mathscr{A}(H)$-$\fixed[0.2]{\text{ }}$words of length $m$ and $n$, then $(\mathfrak a, \mathfrak b) \in \mathscr{C}_H$ if and only if $\pi_H(\mathfrak a) = \pi_H(\mathfrak b)$, $m = n$, and $a_1 \simeq_H b_{\sigma(1)}, \ldots, a_n \simeq_H b_{\sigma(n)}$ for some permutation $\sigma$ of the discrete interval $\llb 1, n \rrb$.
\end{enumerate}
So, if $\mathfrak a = a_1 \ast \cdots \ast a_n$ is a non-empty $\mathscr{A}(H)$-word of length $n$ and $H$ is reduced and commutative, then
\[
\llb \mathfrak a \rrb_{\mathscr{C}_H} = \bigl\{a_{\sigma(1)} \ast \cdots \ast a_{\sigma(n)}: \sigma \text{ is a permutation of } \llb 1, n \rrb\bigr\}.
\]
In addition, we define, for every $x \in H$, the \emph{set of factorizations of $x$} by
\[
\mathcal{Z}_H(x) := \pi_H^{-1}(x) \cap \mathscr{F}(\mathscr{A}(H));
\]
consequently, we take
\[
\mathsf{Z}_H(x) := \mathcal Z_H(x)/\mathscr C_H = \bigl\{ \llb \mathfrak{a} \rrb_{\mathscr{C}_H} : \mathfrak{a}\in \mathcal{Z}_H(x) \bigr\}
\quad\text{and}\quad
{\sf L}_H(x) := \bigl\{\|\mathfrak a\|_H: \mathfrak a \in \mathcal{Z}_H(x)\bigr\} \subseteq \mathbf N
\]
to be the sets of \emph{factorization classes} and \emph{factorization lengths} of $x$, respectively.
Then we say that $H$ is
\begin{itemize}
\item \emph{atomic} if $\mathcal{Z}_H(x) \ne \emptyset$ for every $x \in H \setminus H^\times$; 
\item FF (respectively, BF) if $H$ is atomic and $\mathsf{Z}_H(x)$ (respectively, $\mathsf{L}_H(x)$) is finite for each $x \in H$; 
\item \textup{HF} (respectively, \emph{factorial}) if $|\mathsf{L}_H(x)| = 1$ (respectively, $|\mathsf{Z}_H(x)| = 1$) for all $x \in H \setminus H^\times$.
\end{itemize} 
Of course, we have that
\[
\begin{tikzcd}[arrows=Rightarrow, row sep=0.5cm]
	&
	\!\text{HF}\arrow[shorten <= 2pt,shorten >= 2pt, shift right=-1ex]{dr}
	&
	\\
	\text{factorial}
	\arrow[shorten <= 4pt, shorten >= 4pt, shift right=-1ex]{ur}
	\arrow[shorten <= 7pt, shorten >= 4pt, shift left=-1ex]{dr}
	&  &
	\text{BF} \arrow[shorten <= 1pt, shorten >= 1pt]{r}
	&
	\text{atomic}  \\
	&
	\!\text{FF} \arrow[shorten <= 3pt,shorten >= 3pt, shift left=-1ex]{ur}
	&
\end{tikzcd}
\] 
Finally, we let the \emph{system of sets of lengths} of $H$ be the family 
\[
\mathscr{L}(H) := \{\mathsf{L}_H(x): x\in H \} \setminus \{\emptyset\} \subseteq \mathcal P(\mathbf N).
\]
\subsection{Literature}
\label{sub:other-factorizations}
Our approach to factorization in possibly non-cancellative or non-commutative monoids is borrowed from \cite{FTr17}, where one can read thoroughly about differences and similarities with 
the classical approach to factorization in commutative and cancellative monoids (and hence in integral domains) pursued by A.~Geroldinger and F.~Halter-Koch in \cite{GeHK06}, and with the more recent approach to factorization in cancellative but possibly non-commutative monoids set forth by N.\,R.~Baeth and D.~Smertnig in \cite{BaSm15}; in particular, see \cite[Remarks 2.6 and 2.7]{FTr17}.

This said, there are many previous entries in the literature that have treated aspects of factorization theory in commutative (unital) rings with non-trivial zero divisors. Most notably, D.\,D.~Anderson and collaborators have extensively studied factorizations in commutative rings corresponding to notions of ``associate'' and ``irreducible'' other than the ones adopted in the present paper, see e.g. \cite{AndMar85a,AndMar85b,AnVL96,AnVL97,AgAnVL01,ChunAnd11,ChAnVLe11}. Below we review these alternative definitions and contrast them with our approach. 

To start with, let $R$ be a commutative ring and denote by $R^\times$ the set of units of the multiplicative monoid of $R$. Given $x, y \in R$, we say in the parlance of \cite[Definition 2.1]{AnVL96} that 
\begin{itemize}
	\item $x$ is \emph{associate} to $y$ (in $R$), written $x \sim_R y$, if $xR = yR$;
	\item $x$ is \emph{strongly associate} to $y$, written $x \approx_R y$, if $x \in yR^\times$ (by Proposition \ref{prop:unit-adjust}\ref{it:prop:unit-adjust(0)}, this is equivalent to $x$ being associate (as per \S{ }\ref{subsec:basics-on-monoids}) to $y$ in the multiplicative monoid of $R$);
	\item $x$ is \emph{very strongly associate} to $y$, written $x \cong_R y$, if $x \sim_R y$ and one of the following holds:
	\begin{enumerate}[label = {\rm (\roman{*})}] 
		\item $x = y = 0_R$ (where $0_R$ is the zero of $R$);
		\item $x \neq 0_R$ and if $x = yz$ for some $z \in R$ then $z \in R^\times$.
	\end{enumerate}
\end{itemize}
Accordingly, one has three notions of ``irreducible'', see \cite[Definition 2.4]{AnVL96}. To wit, an element $a \in R$ is 
\begin{itemize}
	\item \emph{irreducible} if $a \notin R^\times$ and $a=xy$ for some $x, y \in R$ implies that $a \sim_R x$ or $a \sim_R y$;
	\item \emph{strongly irreducible} if $a \notin R^\times$ and $a=xy$ for some $x, y \in R$ implies that $a \approx_R x$ or $a \approx_R y$;
	\item \emph{very strongly irreducible} if $a \notin R^\times$ and $a=xy$ for some $x, y \in R$ implies that $a \cong_R x$ or $a \cong_R y$.
\end{itemize}
It is obvious that very strongly irreducible elements of $R$ are strongly irreducible, and strongly irreducible elements are irreducible. In general, none of these implications can be reversed, see the paragraph after the proof of Theorem 2.12 in \cite{AnVL96}. However, we get by \cite[Theorem 2.2(3)]{AnVL96} that the three notions coincide when $R$ is \emph{pr\'esimplifiable} in the sense of \cite{Bou74a}, meaning that if $xy = x$ for some $x, y \in R$ then $x = 0_R$ or $y \in R^\times$ (e.g., this is the case when $R$ is an integral domain). Moreover, \cite[Theorem 2.5]{AnVL96} yields that a \emph{non-zero} element of $R$ is strongly irreducible if and only if it is an atom of the multiplicative monoid of $R$.

Putting it all together, we thus see that the ring $R$ is \emph{very strongly atomic} in the sense of \cite[Definition 3.1]{AnVL96} if and only if one of the following holds:
\begin{enumerate}[label={(\small{A}\arabic{*})}]
\item\label{it:condition(A1)} $R$ has non-trivial zero divisors and the multiplicative monoid of $R$ is atomic (as per \S{ }\ref{subsec:factorizations});
\item\label{it:condition(A2)} $R$ is an integral domain and $R \setminus \{0_R\}$ is an atomic monoid under multiplication.
\end{enumerate}
Similarly, $R$ is a \emph{bounded factorization ring} in the sense of \cite[Definition 3.8]{AnVL96}, a \emph{half-factorial ring} in the sense of \cite[p. 87]{AxFoRoSt03}, a \emph{finite factorization ring} in the sense of \cite[Definition 6.5]{AnVL96}, or a unique factorization ring in the sense of \cite[Definition 4.3]{AnVL96} if and only if one of conditions \ref{it:condition(A1)} and \ref{it:condition(A2)} in the above is satisfied with ``atomic'' replaced, respectively, by ``BF'', ``HF'', ``FF'', or ``factorial''.

As long as the scope is restricted to commutative rings, it is therefore possible to compare our approach to factorization with others based on irreducibles, strong irreducibles, or even alternative ``elementary factors'' (including the ones considered by C.\,R.~Fletcher \cite{Fl69}, A.~Bouvier \cite{Bou74a, Bou74b}, and S.~Galovich \cite{Ga78}) by referring to \cite{AnVL96,ChunAnd11}, where these comparisons are worked out in great detail. (Incidentally, it appears that ``irreducibles'' in the sense of \cite{Ga78} are non-units, although this is not explicitly stated by Galovich in his paper.) See also \cite[Theorem 3.4 and Corollary 3.5]{BaBuMi17} for a couple of results of a more arithmetic flavor concerning lengths of factorizations into irreducibles in commutative rings of the form $D/xD$, where $D$ is a principal ideal domain and $x$ is a non-zero, non-unit element of $D$. 
\section{Atomicity and bounded factorization in power monoids}
\label{sec:atomicity}
Here we embark on the study of the (arithmetic and algebraic) structure of power monoids.
We begin with some elementary but helpful observations we will often use without comment.
\begin{proposition}\label{prop:unit-adjust}
Let $H$ be a monoid.
The following hold:
\begin{enumerate}[label={\rm (\roman{*})}]
	\item\label{it:prop:unit-adjust(0)} If $u, v \in H^\times$ then $uv \in H^\times$, and the converse holds whenever $H$ is Dedekind-finite.
	\item\label{it:prop:unit-adjust(0b)} If $\mathscr A(H)$ is non-empty or $H$ is commutative or unit-cancellative, then $H$ is Dedekind-finite.
\item\label{it:prop:unit-adjust(i)} If $a\in \mathscr{A}(H)$ and $u, v \in H^\times$, then $uav\in \mathscr{A}(H)$.
\item\label{it:prop:unit-adjust(ii)} If $x\in H\setminus H^\times$ and $u, v \in H^\times$, then $\mathsf{L}_H(uxv) = \mathsf{L}_H(x)$.
\end{enumerate}
\end{proposition}
\begin{proof}
See \cite[parts (i), (ii), and (iv) of Lemma 2.2, and Proposition 2.30]{FTr17}.
\end{proof}

\begin{proposition}\label{prop:pm-arith}
Let $H$ be a monoid. The following hold:
	\begin{enumerate}[label={\rm (\roman{*})}]
	\item\label{it:prop:pm-arith(i)} If $X_1,\ldots,X_n\in \PPc_{\fin,1}(H)$, then $X_1 \cup \cdots \cup X_n \subseteq X_1 \cdots X_n$.
	\item\label{it:prop:pm-arith(ii)} If $u,v \in H^\times$ and $X_1,\ldots,X_n\in \PPc_{\fin,\times}(H)$, then $|uX_1 \cdots X_nv| = |X_1 \cdots X_n| \ge \max_{1 \le i \le n} |X_i|$.
	\item\label{it:prop:pm-arith(iii)} If $K$ is a submonoid of $H$, then $\PPc_\fun(K)$ is a divisor-closed submonoid of $\PPc_\fun(H)$. \textup{(}Note that the conclusion is valid regardless of whether $K$ itself is divisor-closed.\textup{)}
	\item \label{it:prop:pm-arith(iv)} $\PPc_\fun(H)$ is a reduced monoid and $\PPc_\fin(H)^\times=\PPc_\funt(H)^\times = \bigl\{ \{u\}: u\in H^\times\bigr\}$.
	\item \label{it:prop:pm-arith(v)} $\mathscr{A}(\mathcal P_{\funt}(H)) \subseteq H^\times \mathscr{A}(\mathcal P_{\fun}(H)) H^\times$.
	\end{enumerate}
\end{proposition}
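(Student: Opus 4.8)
The plan is to settle (i)--(iv) by direct manipulations with the identity $1_H$ and with unit translations, and to concentrate the work on (v). For (i), since $1_H \in X_i$ for every $i$, each element of $X_j$ lies in $X_1 \cdots X_n$ (insert $1_H$ in all factors but the $j$-th), so $X_j \subseteq X_1 \cdots X_n$ for all $j$; take the union. For (ii), the map $h \mapsto uhv$ is a bijection of $H$ whenever $u, v \in H^\times$, hence $|uYv| = |Y|$ for finite $Y \subseteq H$, which gives the stated equality; for the inequality, fix $i$, pick a unit $w_j \in X_j \cap H^\times$ for each $j \ne i$ (possible since $X_j \in \PPc_{\fin,\times}(H)$), and note that $w_1 \cdots w_{i-1}\, X_i\, w_{i+1} \cdots w_n$ is a subset of $X_1 \cdots X_n$ of cardinality $|X_i|$ because products of units are units. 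For (iii), $\PPc_\fun(K)$ is clearly a submonoid of $\PPc_\fun(H)$, and it is divisor-closed: if $UXV = Y$ with $U, V \in \PPc_\fun(H)$ and $Y \in \PPc_\fun(K)$, then $X = 1_H\, X\, 1_H \subseteq UXV = Y \subseteq K$, so the finite set $X \ni 1_H$ belongs to $\PPc_\fun(K)$.

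For (iv), the key observation is that $XY = YX = \{1_H\}$ in $\PPc_\fin(H)$ forces $X$ and $Y$ to be singletons: for $x \in X$ and $y_1, y_2 \in Y$ one has $y_1 = y_1(xy_2) = (y_1 x)y_2 = y_2$, collapsing $Y$, and symmetrically $X$; writing $X = \{x\}$ and $Y = \{y\}$ one reads off $x \in H^\times$. The reverse inclusion being obvious and $\PPc_\funt(H)$ being a submonoid of $\PPc_\fin(H)$ with identity $\{1_H\}$, this yields $\PPc_\fin(H)^\times = \PPc_\funt(H)^\times = \{\{u\} : u \in H^\times\}$; a unit of $\PPc_\fun(H)$ is then a singleton containing $1_H$, hence $\{1_H\}$, so $\PPc_\fun(H)$ is reduced.

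The substance of the proposition is (v). Given $A \in \mathscr{A}(\PPc_\funt(H))$, fix $w \in A \cap H^\times$ and set $B := w^{-1}A$; then $B$ is finite and $1_H \in B$, so $B \in \PPc_\fun(H)$ and $A = \{w\}\,B$ with $\{w\}$ a unit of $\PPc_\funt(H)$ by (iv). It is enough to show that $B \in \mathscr{A}(\PPc_\fun(H))$, since then $A = \{w\}\,B\,\{1_H\} \in H^\times\,\mathscr{A}(\PPc_\fun(H))\,H^\times$. If $B = \{1_H\}$, then $A = \{w\}$ would be a unit of $\PPc_\funt(H)$, contradicting $A \in \mathscr{A}(\PPc_\funt(H))$; so $B$ is a non-unit of $\PPc_\fun(H)$ by (iv). If $B = B_1 B_2$ with $B_1, B_2 \in \PPc_\fun(H) \setminus \{\{1_H\}\}$, then $A = (\{w\}B_1)\,B_2$ with $\{w\}B_1, B_2 \in \PPc_\funt(H)$ (each contains a unit) and, again by (iv), neither a unit of $\PPc_\funt(H)$, because a unit there is a singleton while $|\{w\}B_1| = |B_1| > 1$ and $|B_2| > 1$; this contradicts $A \in \mathscr{A}(\PPc_\funt(H))$. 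Hence $B$ is an atom of $\PPc_\fun(H)$, as needed.

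I expect no genuine obstacle beyond (v), and there the whole argument rests on the explicit unit groups supplied by (iv): it is exactly this that lets the hypothetical factorization $B = B_1 B_2$ in the reduced power monoid be transported to a bona fide (non-unit $\times$ non-unit) factorization $A = (\{w\}B_1)B_2$ in the restricted power monoid, with no cancellation of factors into units. Everything else amounts to bookkeeping with identities and unit translations.
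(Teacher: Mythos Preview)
Your proof is correct and follows essentially the same approach as the paper's. The paper is extremely terse for (i)--(iv), merely declaring them immediate from one another, while you spell out the details (in particular your direct computation that $XY=YX=\{1_H\}$ forces singletons is a nice self-contained argument for (iv)); for (v) the paper first invokes that unit-translates of atoms are atoms in $\PPc_\funt(H)$ and then descends to $\PPc_\fun(H)$, whereas you argue the contrapositive by lifting a hypothetical factorization $B=B_1B_2$ in $\PPc_\fun(H)$ to a non-unit factorization $A=(\{w\}B_1)B_2$ in $\PPc_\funt(H)$---these are two sides of the same coin.
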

\begin{proof}
\ref{it:prop:pm-arith(i)} is trivial, upon considering that $(X \cdot 1_H) \cup (1_H \cdot Y) \subseteq XY$ for all $X, Y \in \mathcal P_\fun(H)$; \ref{it:prop:pm-arith(ii)} is a direct consequence of \ref{it:prop:pm-arith(i)} and the fact that the function $X \to H: x \mapsto uxv$ is injective for all $u, v \in H^\times$ and $X \subseteq H$; and \ref{it:prop:pm-arith(iii)} and \ref{it:prop:pm-arith(iv)} are immediate from \ref{it:prop:pm-arith(i)} and \ref{it:prop:pm-arith(ii)}.

As for \ref{it:prop:pm-arith(v)}, let $A\in \mathscr{A}(\PPc_\funt(H))$.
	Because $A$ contains a unit of $H$, there is $u\in H^\times$ such that $1_H \in uA$.
	Then $uA$ is an element of $\PPc_\fun(H)$, and by Proposition \ref{prop:unit-adjust}\ref{it:prop:unit-adjust(i)} it is also an atom of $\PPc_\funt(H)$.
	Thus, if $X,Y\in \PPc_\fun(H) \subseteq \PPc_\funt(H)$ and $uA = XY$, then $X$ or $Y$ is the identity of $\PPc_\fun(H)$.
	This means that $uA$ is an atom of $\PPc_\fun(H)$, and hence $A=u^{-1}(uA)\in H^\times \mathscr{A}(\PPc_\fun(H))$, as wished.
\end{proof}
Our ultimate goal is, for an arbitrary monoid $H$, to investigate factorizations in $\PPc_\fin(H)$. However, this is a difficult task in general, due to a variety of ``pathological situations'' that might be hard to classify in a satisfactory way, see e.g. \cite[Remark 3.3(ii)]{FTr17}.

In practice, it is more convenient to start with $\PPc_\fun(H)$ and then lift arithmetic results from $\PPc_\fun(H)$ to $\PPc_\funt(H)$, a point of view which is corroborated by the simple consideration that $\PPc_\fin(H) = \PPc_\funt(H)$ whenever $H$ is a group (i.e., in the case of greatest interest in Arithmetic Combinatorics).

In turn, we will see that studying the arithmetic of $\PPc_\funt(H)$ is tantamount to studying that of $\PPc_\fun(H)$, in a sense to be made precise presently.
To do so in as all-encompassing a way as possible, we recall from \cite[Definition 3.2]{Tr18} a notion which formally packages the idea that, under suitable conditions, arithmetic may be transferred from one monoid to another.
\begin{definition}\label{def:equimorphism}
Let $H$ and $K$ be monoids, and let $\varphi$ be a function from $H$ to $K$. We denote by $\varphi^*$ the unique monoid homomorphism  $\mathscr{F}(H) \to \mathscr{F}(K)$ such that $\varphi^\ast(x) = \varphi(x)$ for every $x \in H$, and we call $\varphi$ an \emph{equimorphism} (from $H$ to $K$) if $\varphi$ is a monoid homomorphism $H \to K$ and the following hold:
\begin{enumerate}[label={({\small{E}}\arabic{*})}]
\item\label{def:equimorphism(E1)} $\varphi^{-1}(K^\times)\subseteq H^\times$;
\item\label{def:equimorphism(E2)} $\varphi$ is \emph{atom-preserving}, meaning that $\varphi(\mathscr{A}(H)) \subseteq \mathscr{A}(K)$;
\item\label{def:equimorphism(E3)} If $x\in H$ and $\mathfrak{b}\in \mathcal{Z}_K(\varphi(x))$ is a non-empty $\mathscr A(K)$-word, then $\varphi^*(\mathfrak{a}) \in \llb \mathfrak{b} \rrb_{\mathscr{C}_K}$ for some $\mathfrak{a}\in \mathcal{Z}_H(x)$.
\end{enumerate}
Moreover, we say that $\varphi$ is \emph{essentially surjective} if $K = K^\times \varphi(H)K^\times$.
\end{definition}
\begin{proposition}\label{prop:equimorphism}
Let $H$ and $K$ be monoids and $\varphi:H\to K$ an equimorphism. The following hold:
\begin{enumerate}[label={\rm (\roman{*})}]
\item\label{it:prop:equimorphism(i)} $\mathsf{L}_H(x) = \mathsf{L}_K(\varphi(x))$ for all $x\in H\setminus H^\times$.
\item\label{it:prop:equimorphism(ii)} If $\varphi$ is essentially surjective, then for all $y\in K\setminus K^\times$ there is $x\in H\setminus H^\times$ with $\mathsf{L}_K(y) = \mathsf{L}_H(x)$.
\end{enumerate}
\end{proposition}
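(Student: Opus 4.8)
The plan is to establish \ref{it:prop:equimorphism(i)} by proving the two inclusions $\mathsf{L}_H(x) \subseteq \mathsf{L}_K(\varphi(x))$ and $\mathsf{L}_K(\varphi(x)) \subseteq \mathsf{L}_H(x)$ separately, using $\varphi^\ast$ to push factorizations forward for the first and condition \ref{def:equimorphism(E3)} to pull them back for the second; and then to derive \ref{it:prop:equimorphism(ii)} from \ref{it:prop:equimorphism(i)}, Proposition \ref{prop:unit-adjust}\ref{it:prop:unit-adjust(ii)}, and the hypothesis of essential surjectivity. Throughout, the only standard facts I will need are that $\pi_K \circ \varphi^\ast = \varphi \circ \pi_H$ (both sides being monoid homomorphisms $\mathscr{F}^\ast(H) \to K$ that restrict to $\varphi$ on $H$) and that $\mathscr{C}_K$-congruent $\mathscr{A}(K)$-words have the same word length (immediate from the defining biconditional of $\mathscr{C}_K$, since the empty word is alone in its $\mathscr{C}_K$-class).

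For the forward inclusion in \ref{it:prop:equimorphism(i)}, I would take $n \in \mathsf{L}_H(x)$ and a factorization $\mathfrak{a} = a_1 \ast \cdots \ast a_n \in \mathcal{Z}_H(x)$; here $n \ge 1$ because $x \notin H^\times$ forces $x \ne 1_H$. Condition \ref{def:equimorphism(E2)} gives $\varphi(a_i) \in \mathscr{A}(K)$, so $\varphi^\ast(\mathfrak{a})$ is an $\mathscr{A}(K)$-word of length $n$ with $\pi_K(\varphi^\ast(\mathfrak{a})) = \varphi(\pi_H(\mathfrak{a})) = \varphi(x)$; hence $n \in \mathsf{L}_K(\varphi(x))$. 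For the reverse inclusion, I would take $n \in \mathsf{L}_K(\varphi(x))$ and $\mathfrak{b} \in \mathcal{Z}_K(\varphi(x))$ of length $n$. Condition \ref{def:equimorphism(E1)} and $x \notin H^\times$ give $\varphi(x) \notin K^\times$, so $\varphi(x) \ne 1_K$ and $\mathfrak{b}$ is non-empty; then \ref{def:equimorphism(E3)} produces $\mathfrak{a} \in \mathcal{Z}_H(x)$ with $\varphi^\ast(\mathfrak{a}) \in \llb \mathfrak{b} \rrb_{\mathscr{C}_K}$, and since $\mathfrak{b}$ is non-empty so is $\mathfrak{a}$ (again $x \ne 1_H$). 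Comparing lengths, $\|\mathfrak{a}\|_H = \|\varphi^\ast(\mathfrak{a})\|_K = \|\mathfrak{b}\|_K = n$, so $n \in \mathsf{L}_H(x)$.

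For \ref{it:prop:equimorphism(ii)}, given $y \in K \setminus K^\times$, essential surjectivity yields $u, v \in K^\times$ and $h \in H$ with $y = u\varphi(h)v$. If $\varphi(h)$ were a unit of $K$, so would $y$ be; hence $\varphi(h) \notin K^\times$, and therefore $h \notin H^\times$ because a monoid homomorphism sends units to units. Now Proposition \ref{prop:unit-adjust}\ref{it:prop:unit-adjust(ii)}, applied inside $K$ to $\varphi(h)$, gives $\mathsf{L}_K(y) = \mathsf{L}_K(\varphi(h))$, and part \ref{it:prop:equimorphism(i)}, applied to $h$, gives $\mathsf{L}_K(\varphi(h)) = \mathsf{L}_H(h)$; so $x := h$ has the required property.

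The computations here are all routine; the points needing genuine care are (a) checking that every $\mathscr{A}$-word occurring in the argument is non-empty, which is precisely where $x \notin H^\times$ and condition \ref{def:equimorphism(E1)} enter, and (b) the observation that $\mathscr{C}_K$ preserves word length, without which \ref{def:equimorphism(E3)} would not let us transfer the length of $\mathfrak{b}$. A third, easily overlooked point is that in \ref{it:prop:equimorphism(ii)} the implication ``$\varphi(h) \notin K^\times \Rightarrow h \notin H^\times$'' is not condition \ref{def:equimorphism(E1)} (which runs the other way) but merely the elementary fact that homomorphisms preserve invertibility. I expect (b) to be the only spot where a reader might hesitate, and it is in fact trivial, so I anticipate no real obstacle.
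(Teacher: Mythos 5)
Your proof is correct. Note, though, that the paper does not actually argue this proposition at all: its ``proof'' is a citation to \cite[Theorem 2.22(i)]{FTr17} and \cite[Theorem 3.3(i)]{Tr18}, so there is no in-text argument to compare against. Your self-contained proof supplies exactly what those citations encapsulate, and it does so by the natural route: pushing factorizations forward via $\varphi^\ast$ using \ref{def:equimorphism(E2)} and the identity $\pi_K\circ\varphi^\ast=\varphi\circ\pi_H$, pulling them back via \ref{def:equimorphism(E3)}, and using \ref{def:equimorphism(E1)} to see that $\varphi(x)\notin K^\times$ so that the relevant words are non-empty; the observation that $\mathscr C_K$-congruent words have equal length is indeed the only point requiring a moment's thought, and your justification (the generating relation only identifies equal-length words, and length is additive under concatenation, so the empty word sits alone in its class) is sound. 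Part \ref{it:prop:equimorphism(ii)} is also handled correctly, including the easily confused direction of the unit implication: you rightly use that homomorphisms send units to units (not \ref{def:equimorphism(E1)}) to get $h\notin H^\times$ from $\varphi(h)\notin K^\times$, and then Proposition \ref{prop:unit-adjust}\ref{it:prop:unit-adjust(ii)} in $K$ together with part \ref{it:prop:equimorphism(i)} finishes the argument. The only trade-off is stylistic: the paper delegates to the general transfer results of \cite{FTr17} and \cite{Tr18} (which are stated for arbitrary equimorphisms and yield further consequences), whereas your version keeps the paper self-contained at the cost of reproving a known lemma.
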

\begin{proof}
	See \cite[Theorem 2.22(i)]{FTr17} and \cite[Theorem 3.3(i)]{Tr18}.
\end{proof}
\begin{proposition}\label{prop:funt&fun-have-the-same-system-of-lengths}
	Let $H$ be a Dedekind-finite monoid. The following hold:
	\begin{enumerate}[label={\rm (\roman{*})}]
	\item\label{it:prop:funt&fun-have-the-same-system-of-lengths(i)} The natural embedding $\jmath: \PPc_\fun(H)\hookrightarrow \PPc_\funt(H)$ is an essentially surjective equimorphism.
	\item\label{it:prop:funt&fun-have-the-same-system-of-lengths(ii)} $\mathscr{A}(\mathcal P_{\funt}(H)) = H^\times \mathscr{A}(\mathcal P_{\fun}(H)) H^\times$.
	\item\label{it:prop:funt&fun-have-the-same-system-of-lengths(iii)} $\mathsf{L}_{\mathcal P_{\fun}(H)}(X) = \mathsf{L}_{\mathcal P_{\funt}(H)}(X)$ for every $X \in \mathcal P_{\fun}(H)$.
	\item\label{it:prop:funt&fun-have-the-same-system-of-lengths(iv)} $\mathscr L(\mathcal P_{\funt}(H)) = \mathscr L(\mathcal P_{\fun}(H))$.
	\end{enumerate}
\end{proposition}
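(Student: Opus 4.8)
My plan is to prove part~(i) in full and then read off parts~(ii)--(iv) almost mechanically. The workhorse, and the one place where Dedekind-finiteness really bites, is the elementary observation that \emph{in a Dedekind-finite monoid a finite product $b_1 \cdots b_n$ of elements of $H$ is a unit only if every $b_i \in H^\times$}: from $b_1(b_2 \cdots b_n) \in H^\times$ one sees that $b_1$ has a right inverse, hence $b_1 \in H^\times$ by Dedekind-finiteness, hence $b_2 \cdots b_n = b_1^{-1}(b_1 \cdots b_n) \in H^\times$, and one finishes by induction on $n$. I would record this at the outset, since it recurs below.

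Next I would verify that the inclusion $\jmath$ is an essentially surjective equimorphism. Condition~\ref{def:equimorphism(E1)} is immediate from Proposition~\ref{prop:pm-arith}\ref{it:prop:pm-arith(iv)}: $\jmath^{-1}(\PPc_\funt(H)^\times) = \PPc_\funt(H)^\times \cap \PPc_\fun(H) = \{\{1_H\}\} = \PPc_\fun(H)^\times$. For~\ref{def:equimorphism(E2)}, suppose $A \in \mathscr{A}(\PPc_\fun(H))$ splits as $A = XY$ with $X, Y \in \PPc_\funt(H)$ non-units; since $1_H \in A$ there are $x \in X$ and $y \in Y$ with $xy = 1_H$, so $x, y \in H^\times$ by the observation above, and then the setwise identity $A = (Xx^{-1})(xY)$ exhibits $A$ as a product of $Xx^{-1}, xY \in \PPc_\fun(H)$ (both contain $1_H$), neither of which is $\{1_H\}$ because $X, Y \notin \PPc_\funt(H)^\times$ --- contradicting $A \in \mathscr{A}(\PPc_\fun(H))$. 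Hence $\mathscr{A}(\PPc_\fun(H)) \subseteq \mathscr{A}(\PPc_\funt(H))$, and combining this with Proposition~\ref{prop:pm-arith}\ref{it:prop:pm-arith(v)} and Proposition~\ref{prop:unit-adjust}\ref{it:prop:unit-adjust(i)} (the latter applied inside $\PPc_\funt(H)$, whose units are the singletons $\{u\}$, $u \in H^\times$) already proves part~(ii). Essential surjectivity is equally quick: if $Y \in \PPc_\funt(H)$ and $u \in Y \cap H^\times$, then $u^{-1}Y \in \PPc_\fun(H)$ and $Y = \{u\}\,(u^{-1}Y)$ with $\{u\} \in \PPc_\funt(H)^\times$.

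The substance lies in~\ref{def:equimorphism(E3)}. Given $X \in \PPc_\fun(H)$ and a factorization $\mathfrak{b} = B_1 \ast \cdots \ast B_n$ of $X$ into atoms of $\PPc_\funt(H)$, I would use $1_H \in X = B_1 \cdots B_n$ to choose $b_i \in B_i$ with $b_1 \cdots b_n = 1_H$, put $w_0 := 1_H$ and $w_i := b_1 \cdots b_i$ (so, by the observation, every $w_i \in H^\times$, and $w_n = 1_H$), and set $A_i := \{w_{i-1}\}\,B_i\,\{w_i^{-1}\}$. A direct check gives $1_H = w_{i-1}b_iw_i^{-1} \in A_i$, so $A_i \in \PPc_\fun(H)$; the partial products telescope, so $A_1 \cdots A_n = \{w_0\}\,(B_1 \cdots B_n)\,\{w_n^{-1}\} = X$; and each $A_i$, being associate to $B_i$ in $\PPc_\funt(H)$, lies in $\mathscr{A}(\PPc_\funt(H))$ by Proposition~\ref{prop:unit-adjust}\ref{it:prop:unit-adjust(i)}, hence in $\mathscr{A}(\PPc_\fun(H))$ --- indeed $A_i \in \PPc_\fun(H)$, $A_i \ne \{1_H\}$, and any splitting of $A_i$ into two non-units of $\PPc_\fun(H)$ would be a splitting into two non-units of $\PPc_\funt(H)$. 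Thus $\mathfrak{a} := A_1 \ast \cdots \ast A_n \in \mathcal{Z}_{\PPc_\fun(H)}(X)$, while $\jmath^\ast(\mathfrak{a}) = A_1 \ast \cdots \ast A_n$ has the same $\pi_{\PPc_\funt(H)}$-image as $\mathfrak{b}$, the same length, and term-by-term associate atoms, so $\jmath^\ast(\mathfrak{a}) \in \llb \mathfrak{b} \rrb_{\mathscr{C}_{\PPc_\funt(H)}}$. This is exactly~\ref{def:equimorphism(E3)}, and part~(i) follows.

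Part~(iii) then follows from part~(i) and Proposition~\ref{prop:equimorphism}\ref{it:prop:equimorphism(i)} for every non-unit $X$ of $\PPc_\fun(H)$; for the unique unit $X = \{1_H\}$ one notes, using Proposition~\ref{prop:pm-arith}\ref{it:prop:pm-arith(ii)} (which forces any factor of $\{1_H\}$ to be a singleton) together with the observation above, that in either monoid the empty word is its only factorization into atoms, whence both length sets equal $\{0\}$. Part~(iv) is now formal: one inclusion is part~(iii), and the reverse follows from Proposition~\ref{prop:equimorphism}\ref{it:prop:equimorphism(ii)} applied to non-units of $\PPc_\funt(H)$, plus the computation just made for $\{1_H\}$. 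The one genuinely delicate step is~\ref{def:equimorphism(E3)}: one must re-center each $B_i$ by the \emph{running} partial product $w_{i-1}$ (rather than by a single fixed unit), and it is Dedekind-finiteness that makes the $w_i$ invertible --- which is precisely the point at which the hypothesis on $H$ is used.
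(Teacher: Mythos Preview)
Your proof is correct and follows essentially the same route as the paper: you verify \ref{def:equimorphism(E1)}--\ref{def:equimorphism(E3)} and essential surjectivity for $\jmath$ by the same arguments (in particular, your ``running partial product'' construction $A_i = w_{i-1}B_i w_i^{-1}$ for \ref{def:equimorphism(E3)} is exactly the paper's $A_i := u_0\cdots u_{i-1}\,B_i\,u_i^{-1}\cdots u_1^{-1}$), and then deduce (ii)--(iv) from (i) together with Propositions~\ref{prop:unit-adjust}, \ref{prop:pm-arith}, and~\ref{prop:equimorphism} just as the paper does. The only cosmetic differences are that you prove the ``product of elements is a unit $\Rightarrow$ each factor is a unit'' lemma directly rather than citing \cite[Lemma~2.30]{FTr17}, and you handle the unit $X=\{1_H\}$ in (iii) explicitly where the paper leaves it implicit.
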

\begin{proof}
	In view of Proposition \ref{prop:equimorphism}, parts \ref{it:prop:funt&fun-have-the-same-system-of-lengths(iii)} and \ref{it:prop:funt&fun-have-the-same-system-of-lengths(iv)} are immediate from \ref{it:prop:funt&fun-have-the-same-system-of-lengths(i)}.
	Moreover, the inclusion from left to right in \ref{it:prop:funt&fun-have-the-same-system-of-lengths(ii)} is precisely the content of Proposition \ref{prop:pm-arith}\ref{it:prop:pm-arith(v)}, and the other inclusion will follow from \ref{it:prop:funt&fun-have-the-same-system-of-lengths(i)} and Propositions \ref{prop:unit-adjust}\ref{it:prop:unit-adjust(i)} and \ref{prop:pm-arith}\ref{it:prop:pm-arith(iv)}.
	Therefore, we focus on \ref{it:prop:funt&fun-have-the-same-system-of-lengths(i)} for the remainder of the proof.
	
	\ref{it:prop:funt&fun-have-the-same-system-of-lengths(i)} By Proposition \ref{prop:pm-arith}\ref{it:prop:pm-arith(iv)}, $\jmath$ satisfies \ref{def:equimorphism(E1)}.
	Moreover, $\jmath$ is essentially surjective, as any $X\in \mathcal P_\funt(H)$ contains a unit $u \in H^\times$, so $u^{-1}X\in \mathcal P_\fun(H)$ and $X = u(u^{-1}X)$ is associate to an element of $\mathcal P_\fun(H)$. 

	To prove \ref{def:equimorphism(E2)}, let $A\in\mathscr{A}(\mathcal P_\fun(H))$.
	We aim to show that $A$ is an atom of $\mathcal P_\funt(H)$. Suppose that $A = XY$ for some $X,Y\in \mathcal P_\funt(H)$. Then there are $x \in X$ and $y \in Y$ with
	$xy=1_H$; and using that $H$ is Dedekind-finite, we get from Proposition \ref{prop:unit-adjust}\ref{it:prop:unit-adjust(0)} that $x,y\in H^\times$.
	It follows that 
	\[
	A = XY = (Xx^{-1})(xY)
	\quad\text{and}\quad
	Xx^{-1}, xY\in \mathcal P_\fun(H). 
	\]
	But then
	$Xx^{-1} = \{1_H\}$ or $xY = \{1_H\}$, since $\mathcal P_\fun(H)$ is a reduced monoid and $A$ is an atom of $\mathcal P_\fun(H)$.
	So, $X$ or $Y$ is a $1$-element subset of $H^\times$, and hence $A \in \mathscr A(\mathcal P_\funt(H))$.

	It remains to show that $\jmath$ satisfies \ref{def:equimorphism(E3)}.  Pick $X \in \mathcal P_\fun(H)$. If $X = \{1_H\}$, the conclusion holds vacuously. Otherwise, let $\mathfrak{b} := B_1*\cdots*B_n \in \mathcal Z_{\mathcal P_\fun(H)}(X)$. Then there are $u_1\in B_1,\ldots, u_n\in B_n$ such that $1_H = u_1\cdots u_n$; and as in the proof of \ref{def:equimorphism(E2)}, it must be that $u_1,\ldots, u_n\in H^\times$.
	Accordingly, we take, for every $i \in \llb 1, n \rrb$, $A_i := u_0 \cdots u_{i-1} B_i u_i^{-1} \cdots u_1^{-1}$, where $u_0 := 1_H$.
	Then 
	\[
	A_1 \cdots A_n = X
	\quad\text{and}\quad
	1_H \in A_1 \cap \cdots \cap A_n;
	\]
	and by Propositions \ref{prop:unit-adjust}\ref{it:prop:unit-adjust(i)} and \ref{prop:pm-arith}\ref{it:prop:pm-arith(v)}, $A_1, \ldots, A_n$ are atoms of $\mathcal P_{\fun}(H)$. This shows that $\mathfrak{a}:= A_1*\cdots * A_n \in \mathcal{Z}_{\mathcal P_\fun(H)}(X)$. Since $A_i \simeq_{\mathcal P_\funt(H)} B_i$ for each $i \in \llb 1, n \rrb$ (by construction), we thus conclude that $\mathfrak{a}$ is $\mathscr{C}_{\mathcal P_{\funt}(H)}$-congruent to $\mathfrak{b}$, as wished.
\end{proof} 
The next example proves that Dedekind-finiteness is, to some extent, necessary for Proposition \ref{prop:funt&fun-have-the-same-system-of-lengths}\ref{it:prop:funt&fun-have-the-same-system-of-lengths(ii)}, and hence for the subsequent conclusions. 
\begin{example}\label{exa:no-dedekind-finiteness}
Let $\mathcal{B}$ be the set of all binary sequences $\mathfrak{s}: \mathbf N^+ \to \{0,1\}$, and let $H$ denote the monoid of all functions $\mathcal{B} \to \mathcal{B}$ under composition.  We will write $H$ multiplicatively; so, if $f, g \in H$ then $fg$ is the map $\mathcal{B} \to \mathcal{B}: \mathfrak{s} \mapsto f(g(\mathfrak s))$.
Further, let $n\ge 5$ and consider the functions 
\begin{align*}
L: \mathcal{B} \to\mathcal{B}: (a_1,a_2,\dots) &\mapsto (a_2,a_3,\dots)  &\,\textrm{(left shift);}\\
R: \mathcal{B} \to\mathcal{B}: (a_1,a_2,\dots) &\mapsto (0,a_1,a_2,\dots) &\,\textrm{(right shift);}\\
P: \mathcal{B} \to\mathcal{B}: (a_1,a_2,\dots) &\mapsto (a_{n},a_1,\dots, a_{n-1}, {a_{n+1}, a_{n+2}},\ldots) &\,\textrm{(cycle the first $n$ terms).}
\end{align*}
In particular, $P \in H^\times$. Also, $LR = \operatorname{id}_\mathcal{B}$ but $RL \neq \operatorname{id}_\mathcal{B}$; whence $H$ is not Dedekind-finite, and neither $R$ nor $L$ is invertible. With this in mind, we will prove that $A:= \{L,P\} \cdot \{R,P\} = \{\operatorname{id}_\mathcal{B}, LP, PR, P^2 \}$ is an atom of $\PPc_\fun(H)$, although it is not, by construction, an atom of $\PPc_\funt(H)$.

Indeed,
assume $A = XY$ for some $X,Y\in \PPc_\fun(H)$.
Then $X,Y\subseteq A$, and it is clear that $P^2 \ne PRLP$, or else $RL = {\rm id}_\mathcal{B}$ (a contradiction). Similarly, $PRPR \ne P^2 \ne LPLP$; otherwise, $P = RPR$ and hence $R$ is invertible, or $P = LPL$ and $L$ is invertible (again a contradiction). Lastly, we see that $P^2 \ne LP^2 R$, by applying both $P^2$ and $LP^2 R$ to the constant sequence $(1, 1, \ldots)$.

It follows that $P^2$ must belong to $X$ or $Y$, but not to both (which is the reason for choosing $n\ge 5$).
Accordingly, let $P^2 \in X \setminus Y$ (the other case is analogous).
Then $Y = \{\operatorname{id}_\mathcal{B}\}$, since one can easily check that $P^2LP, P^3R \notin A$, by noting that the action of $P^2LP$ and $P^3R$ differ from that of $A$ on the sequences $(1,1,\ldots)$ and $(1,0,1,1,\ldots)$. This makes $A$ an atom of $\PPc_\fun(H)$.
\end{example}
We get from Proposition \ref{prop:funt&fun-have-the-same-system-of-lengths} that studying factorization properties of $\PPc_\fun(H)$ is sufficient for studying corresponding properties of $\PPc_\funt(H)$, at least in the case when $H$ is Dedekind-finite.
Thus, as a starting point in the investigation of the arithmetic of $\PPc_\fun(H)$, one might wish to give a comprehensive description of
the atoms of $\PPc_\fun(H)$.
This is however an overwhelming task even in specific cases (e.g., when $H$ is the additive group of the integers), let alone the general case. Nevertheless, we can obtain basic information about $\mathscr A(\PPc_\fun(H))$ in full generality.
\begin{lemma}\label{lem:2-elt-atoms}
Let $H$ be a monoid and $x \in H \setminus \{1_H\}$.
The following hold:
\begin{enumerate}[label={\rm (\roman{*})}]
\item\label{it:lem:2-elt-atoms(i)} The set $\{1_H, x\}$ is an atom of $\PPc_\fun(H)$ if and only if $1_H \ne x^2 \ne x$.
\item\label{it:lem:2-elt-atoms(ii)} If $x^2=1_H$ or $x^2=x$, then $\{1_H,x\}$ factors into a product of atoms neither in $\PPc_{\fin,1}(H)$ nor in $\PPc_\funt(H)$.
\end{enumerate}
\end{lemma}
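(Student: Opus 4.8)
The plan is to prove both parts by a careful case analysis on the relation satisfied by $x$, exploiting the containment $X \cup Y \subseteq XY$ from Proposition \ref{prop:pm-arith}\ref{it:prop:pm-arith(i)}, which forces any factor of a set $Z \in \PPc_\fun(H)$ to be a subset of $Z$ containing $1_H$.

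\textbf{Part \ref{it:lem:2-elt-atoms(i)}.}
First I would dispose of the ``only if'' direction. If $x^2 = x$ then $x = 1_H$ (a unit times $x$), contradicting $x \ne 1_H$ is not quite the point --- rather, $x^2 = x$ means $\{1_H,x\}$ is idempotent: $\{1_H,x\}\cdot\{1_H,x\} = \{1_H,x,x^2\} = \{1_H,x\}$, so it is a product of two non-units equal to itself and hence not an atom (in fact it has no atom factorization, anticipating \ref{it:lem:2-elt-atoms(ii)}). If $x^2 = 1_H$, then $\{1_H,x\}\cdot\{1_H,x\} = \{1_H,x,x^2\} = \{1_H,x\}$ as well, so again $\{1_H,x\}$ is a non-trivial idempotent and not an atom. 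Conversely, suppose $1_H \ne x^2 \ne x$ and $\{1_H,x\} = XY$ with $X,Y \in \PPc_\fun(H)$ both non-units. By Proposition \ref{prop:pm-arith}\ref{it:prop:pm-arith(i)}, $X,Y \subseteq \{1_H,x\}$, and since both contain $1_H$ and neither is $\{1_H\}$ (that being the identity of the reduced monoid $\PPc_\fun(H)$ by Proposition \ref{prop:pm-arith}\ref{it:prop:pm-arith(iv)}), we must have $X = Y = \{1_H,x\}$. But then $XY = \{1_H,x,x^2\}$, which has three distinct elements because $x^2 \notin \{1_H,x\}$, contradicting $XY = \{1_H,x\}$. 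Hence $\{1_H,x\}$ is an atom.

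\textbf{Part \ref{it:lem:2-elt-atoms(ii)}.}
Now suppose $x^2 = 1_H$ or $x^2 = x$. I want to show $\{1_H,x\}$ is not a product of atoms in either monoid; by Proposition \ref{prop:pm-arith}\ref{it:prop:pm-arith(iii)}, $\PPc_\fun(H)$ is divisor-closed in $\PPc_\fun(H)$, and by Proposition \ref{prop:funt&fun-have-the-same-system-of-lengths}-type reasoning it suffices to handle $\PPc_\funt(H)$ and intersect, but more directly: suppose $\{1_H,x\} = A_1 \cdots A_n$ with each $A_i$ an atom (of either monoid). After multiplying by units from $H^\times$ we may (using Proposition \ref{prop:unit-adjust}\ref{it:prop:unit-adjust(ii)}, or rather working in $\PPc_\fun(H)$ directly) assume each $A_i \in \PPc_\fun(H)$. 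By Proposition \ref{prop:pm-arith}\ref{it:prop:pm-arith(i)} every $A_i \subseteq \{1_H,x\}$, so each $A_i$ is either $\{1_H\}$ (impossible, as atoms are non-units) or $\{1_H,x\}$; thus $\{1_H,x\} = \{1_H,x\}^n$ for some $n \ge 1$. If $n = 1$ this would say $\{1_H,x\}$ is an atom, but part \ref{it:lem:2-elt-atoms(i)} shows it is not (since $x^2 = 1_H$ or $x^2 = x$). If $n \ge 2$, then $\{1_H,x\}^2 = \{1_H,x,x^2\} = \{1_H,x\}$ (using the hypothesis on $x^2$), so $\{1_H,x\}^n = \{1_H,x\}$ for all $n$, but this displays $\{1_H,x\}$ as a product of $n \ge 2$ non-units, so each $A_i$ would fail to be an atom only if... here the point is simply that $\{1_H,x\}$ being idempotent means it is divisible by itself, and an element equal to a product of $\ge 2$ copies of a non-unit cannot have all factors atomic unless one collapses --- more cleanly: if $\{1_H,x\} = A_1 \cdots A_n$ with $n \ge 2$ and all $A_i = \{1_H,x\}$, pick the factorization and note $A_1 = \{1_H,x\} = A_1(A_2\cdots A_n)$ with $A_2 \cdots A_n$ a non-unit (it is a product of non-units in a reduced monoid, hence a non-unit by Proposition \ref{prop:pm-arith}\ref{it:prop:pm-arith(iv)} since it properly... ) exhibits $A_1$ as non-atomic, a contradiction; and $n=1$ is excluded by \ref{it:lem:2-elt-atoms(i)}. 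The case $A_i$ being atoms of $\PPc_\funt(H)$ rather than $\PPc_\fun(H)$ is reduced to the above by Proposition \ref{prop:pm-arith}\ref{it:prop:pm-arith(v)}, which lets us replace each $A_i$ by an associate lying in $\PPc_\fun(H)$ while preserving the product up to units, and then Proposition \ref{prop:pm-arith}\ref{it:prop:pm-arith(iii)} (divisor-closedness) keeps us inside $\PPc_\fun(H)$.

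\textbf{Main obstacle.}
The genuine subtlety is the bookkeeping in part \ref{it:lem:2-elt-atoms(ii)} when passing between $\PPc_\funt(H)$ and $\PPc_\fun(H)$: atoms of $\PPc_\funt(H)$ need not contain $1_H$, so one must left- and right-translate by suitable units to land in $\PPc_\fun(H)$, and then argue that a hypothetical atom factorization of $\{1_H,x\}$ in $\PPc_\funt(H)$ yields one in $\PPc_\fun(H)$ (which Proposition \ref{prop:pm-arith}\ref{it:prop:pm-arith(v)} and divisor-closedness handle) --- after which the ``factor must be a subset'' observation is immediate. The purely combinatorial heart, namely that $\{1_H,x\}$ with $x^2 \in \{1_H,x\}$ is a non-trivial idempotent and therefore not a product of atoms, is short; it is the interface with the restricted power monoid that needs the most care, and I would state it as a clean sub-claim (``a non-unit $Z \in \PPc_\fun(H)$ that is idempotent has $\ZZc_{\PPc_\fun(H)}(Z) = \PPc_\funt(H)$-analogue empty'') before invoking it.
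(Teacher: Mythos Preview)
Your argument for part \ref{it:lem:2-elt-atoms(i)} and for the $\PPc_\fun(H)$-half of part \ref{it:lem:2-elt-atoms(ii)} is essentially the paper's: both observe that $\{1_H,x\}$ is an idempotent non-unit when $x^2 \in \{1_H,x\}$, and that any non-unit factor in $\PPc_\fun(H)$ must equal $\{1_H,x\}$ itself.

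For the $\PPc_\funt(H)$-half of \ref{it:lem:2-elt-atoms(ii)} the two proofs diverge. The paper argues \emph{directly} inside $\PPc_\funt(H)$: writing $\{1_H,x\} = YZ$ with $Y$ an atom and $Z$ a non-unit (both necessarily $2$-element sets), it performs a case analysis on which pair of elements from $Y$ and $Z$ multiplies to $1_H$, and in each case exhibits $Y$ as a product of two non-units of $\PPc_\funt(H)$. Your route---translate each atom $A_i$ into $\PPc_\fun(H)$ via Proposition \ref{prop:pm-arith}\ref{it:prop:pm-arith(v)} and re-run the reduced-monoid argument---is more conceptual and can be made to work, but as written it has a gap. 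After telescoping you obtain $C_1 \cdots C_n = u^{-1}\{1_H,x\}v^{-1} = \{1_H,y\}$ for some units $u,v$ and some $y \ne 1_H$; you cannot simply say this ``is reduced to the above,'' because your $\PPc_\fun$-argument used the hypothesis $x^2 \in \{1_H,x\}$, and you have not shown $y^2 \in \{1_H,y\}$. The fix is short: the containment argument forces each $C_i = \{1_H,y\}$, so $\{1_H,y\} = \{1_H,y\}^n$; combined with $\{1_H,y\} \subseteq \{1_H,y\}^2 \subseteq \{1_H,y\}^n$ this gives $\{1_H,y\}^2 = \{1_H,y\}$, whence $C_1$ is a non-trivial idempotent and not an atom (and $n=1$ is excluded because $\{1_H,x\}$, being an idempotent non-unit of $\PPc_\funt(H)$, cannot itself be an atom there). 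Finally, your appeal to Proposition \ref{prop:pm-arith}\ref{it:prop:pm-arith(iii)} is misplaced: that result concerns $\PPc_\fun(K)$ inside $\PPc_\fun(H)$ for a submonoid $K$ of $H$, not the passage between $\PPc_\fun(H)$ and $\PPc_\funt(H)$; all you actually need at that step is that $\PPc_\fun(H)$ is closed under products.
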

\begin{proof}
\ref{it:lem:2-elt-atoms(i)} If $x^2 = 1_H$ or $x^2 = x$, then it is clear that $\{1_H,x\} = \{1_H,x\}^2$, and therefore $\{1_H,x\}$ is not an atom of $\mathcal P_\fun(H)$. As for the converse, assume that $\{1_H,x\} = YZ$ for some non-units $Y,Z \in \mathcal P_\fun(H)$. Then we get from Proposition \ref{prop:pm-arith} that $Y$ and $Z$ are $2$-element sets, namely, $Y = \{1_H, y\}$ and $Z = \{1_H, z\}$ with $y,z \in H \setminus \{1_H\}$. Hence $\{1_H,x\} = YZ = \{1_H,y,z,yz\}$, and immediately this implies $x=y=z$. Therefore, $\{1_H,x\} = \{1_H,x,x^2\}$, which is only possible if $x^2 = 1_H$ or $x^2 = x$.

\ref{it:lem:2-elt-atoms(ii)} Suppose that $x^2 = 1_H$ or $x^2 = x$. Then the calculation above shows that $\{1_H,x\} = \{1_H,x\}^2$ and there is no other decomposition of $\{1_H,x\}$ into a product of non-unit elements of $\mathcal P_{\fin,1}(H)$. So, $\{1_H,x\}$ is a non-trivial idempotent (hence, a non-unit) and has no factorization into atoms of $\mathcal P_{\fin,1}(H)$.

It remains to prove the analogous statement for $\mathcal P_{\fin,\times}(H)$.
{Assume to the} contrary that $\{1_H,x\}$ factors into a product of $n$ atoms of $\mathcal P_{\fin,\times}(H)$ for some $n \in \mathbf N^+$. Then $n \ge 2$, since $\{1_H, x\}$ is a non-trivial idempotent (and hence not an atom itself). Consequently, we can write $\{1_H,x\} = YZ$, where $Y$ is an atom and $Z$ a non-unit of $\mathcal P_{\fin,\times}(H)$. In particular, we get from parts \ref{it:prop:pm-arith(i)}, \ref{it:prop:pm-arith(ii)}, and \ref{it:prop:pm-arith(iv)} of Proposition \ref{prop:pm-arith} that both $Y$ and $Z$ are $2$-element sets, say, $Y = \{u, y\}$ and $Z = \{v, z\}$. It is then immediate that there are only two possibilities: $1_H$ is the product of two units from $Y$ and $Z$, or the product of two non-units from $Y$ and $Z$.
Without loss of generality, we are thus reduced to considering the following cases.
\vskip 0.1cm
\textsc{Case 1:} $uv=1_H$. Then $uz \ne 1_H$ (or else $z = u^{-1} = v$, contradicting the fact that $Z$ is a $2$-element set). So $uz=x$, and similarly $yv=x$. Then $y = xu = uzu$ and $z = xv = vyv$, and therefore
\[
\{u,y\} = \{u,uzu\} = \{1_H,uz\}\, \{u\} = \{1_H,x\}\,\{u\} = \{u,y\}\, \{vu,zu\}.
\]
However, this shows that $\{u,y\}$ is not an atom of $\PPc_\funt(H)$, in contrast with our assumptions.
\vskip 0.1cm
\textsc{Case 2:}
$yz = 1_H$ and $y,z\in H\setminus H^\times$. Then $u,v\in H^\times$, by the fact that $\{u, y\}, \{v, z\} \in \mathcal P_{\fin,\times}(H)$; and we must have $uz=x$, for $uz=1_H$ would yield $z = u^{-1}\in H^\times$.
In particular, $x = uz$ is not a unit in $H$, so $uv=1_H$ and we are back to the previous case.
\end{proof}
We have just seen that, to even {\it hope} for $\PPc_\fun(H)$ to be atomic, we need that the ``bottom layer'' of $2$-element subsets of $H$ consists only of atoms; perhaps surprisingly, it will turn out that such a condition is also sufficient.
Before proving this, we point out some structural implications of the fact that every non-identity element of $H$ is neither an idempotent nor a square root of $1_H$.
\begin{lemma}\label{lem:no-non-id-elts-of-small-order-implies-structure}
	Let $H$ be a monoid such that $1_H \ne x^2 \ne x$ for all $x\in H\setminus\{1_H\}$. The following hold:
	\begin{enumerate}[label={\rm (\roman{*})}]
		\item\label{it:lem:no-non-id-elts-of-small-order-implies-structure(i)}
		$H$ is Dedekind-finite.
		\item\label{it:lem:no-non-id-elts-of-small-order-implies-structure(ii)}
		If $x \in H \setminus \{1_H\}$ and $\gen{x}_H$ is finite, then $x \in H^\times$ and $\gen{x}_H$ is a cyclic group of order $\ge 3$.
	\end{enumerate}
\end{lemma}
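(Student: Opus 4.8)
The plan is to derive both parts from a single reformulation of the hypothesis: the condition that $1_H \ne x^2 \ne x$ for every $x \in H \setminus \{1_H\}$ says exactly that $1_H$ is the only idempotent of $H$ and, simultaneously, the only solution in $H$ of $y^2 = 1_H$. Granting this, part~(i) is immediate: if $xy = 1_H$ for some $x, y \in H$, then $(yx)(yx) = y(xy)x = yx$, so $yx$ is an idempotent of $H$ and hence equals $1_H$; thus $H$ is Dedekind-finite.

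For part~(ii), assume $\gen{x}_H$ is finite. The first (and only slightly delicate) step is to produce an idempotent among the positive powers of $x$. Since the set $\{x^n : n \in \mathbf{N},\ n \ge 1\}$ is finite, there are integers $1 \le i < j$ with $x^i = x^j$; put $d := j - i \ge 1$. From $x^i = x^{i+d}$ one obtains inductively that $x^a = x^b$ whenever $a, b \ge i$ and $a \equiv b \pmod d$; choosing $k \ge 1$ with $kd \ge i$ and observing that $kd$ and $2kd$ are both $\ge i$ and congruent modulo $d$, we get $(x^{kd})^2 = x^{2kd} = x^{kd}$. So $x^{kd}$ is an idempotent of $H$, whence $x^{kd} = 1_H$ by hypothesis. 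Setting $m := kd \ge 1$ and writing $x^0 := 1_H$, we conclude $x \cdot x^{m-1} = x^{m-1} \cdot x = x^m = 1_H$, so that $x \in H^\times$; and then $\gen{x}_H = \{1_H, x, \ldots, x^{m-1}\}$ is precisely the finite cyclic subgroup of $H^\times$ generated by $x$.

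It remains to bound the order $\ell := \ord_H(x) = |\gen{x}_H|$ from below. Assuming $x \ne 1_H$, we have $\gen{x}_H \supseteq \{1_H, x\}$, so $\ell \ge 2$; and $\ell \ne 2$, for if $\gen{x}_H = \{1_H, x\}$ were a group of order two then $x^2 = 1_H$ with $x \ne 1_H$, contradicting the hypothesis. Hence $\ell \ge 3$, which yields the claim. The one point that calls for any care is the extraction of an idempotent positive power of $x$ at the beginning of part~(ii); after that, every assertion follows by short, direct computations, using repeatedly that $1_H$ is the unique idempotent — and the unique square root of $1_H$ — in $H$.
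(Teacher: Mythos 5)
Your proof is correct and follows essentially the same route as the paper: part (i) via the observation that $zy$ is an idempotent whenever $yz=1_H$, and part (ii) by extracting an idempotent positive power of $x$ (the standard finite-semigroup argument), forcing it to equal $1_H$, and then ruling out order $2$ by the hypothesis. The only difference is cosmetic — you spell out the order-$\ge 3$ bound, which the paper merely asserts.
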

\begin{proof}
	\ref{it:lem:no-non-id-elts-of-small-order-implies-structure(i)}
	Let $y,z\in H$ such that $yz = 1_H$. Then $(zy)^2 = z(yz)y = zy$, and since $H$ has no non-trivial i\-dem\-po\-tents, we conclude that $zy=1_H$. Consequently, $H$ is Dedekind-finite.
	
	\ref{it:lem:no-non-id-elts-of-small-order-implies-structure(ii)}
	This is an obvious consequence of \cite[Ch. V, Exercise 4, p. 68]{Whitelaw}, according to which every finite semigroup has an idempotent.
	The proof is short, so we give it here for the sake of self-containedness.
	
	Because $\gen{x}_H$ is finite, there exist $n, k \in \mathbf N^+$ such that $x^n = x^{n+k}$, and by induction this implies that $x^n = x^{n+hk}$ for all $h \in \mathbf N$. Therefore, we find that
	\[
	(x^{nk})^2 = x^{2nk} = x^{(k+1)n}x^{(k-1)n} = x^n x^{(k-1)n} = x^{nk}.
	\]
	But $H$ has no non-trivial idempotents, thus it must be the case that $x^{nk}=1_H$. That is, $x$ is a unit of $H$, and we have $x^{-1} = x^{nk-1} \in \gen{x}_H$. So, $\gen{x}_H$ is a (finite) cyclic group of order $\ge 3$.
\end{proof}

\begin{theorem}\label{th:atomicity}
	Let $H$ be a monoid. Then $\mathcal P_\fun(H)$ is atomic if and only if $1_H \ne x^2 \ne x$ for every $x \in H \setminus \{1_H\}$.
\end{theorem}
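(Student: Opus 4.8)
The statement splits into two implications, and only the ``if'' direction needs real work. For the ``only if'' direction: if the displayed condition fails, choose $x \in H \setminus \{1_H\}$ with $x^2 = 1_H$ or $x^2 = x$; then Lemma \ref{lem:2-elt-atoms}\ref{it:lem:2-elt-atoms(ii)} presents $\{1_H, x\}$ as a non-unit of $\mathcal{P}_{\fun}(H)$ admitting no factorization into atoms, so $\mathcal{P}_{\fun}(H)$ is not atomic. Assume henceforth that $1_H \ne x^2 \ne x$ for all $x \in H \setminus \{1_H\}$. Since $\mathcal{P}_{\fun}(H)$ is reduced (Proposition \ref{prop:pm-arith}\ref{it:prop:pm-arith(iv)}), its non-units are precisely the sets of cardinality at least $2$, and the plan is to prove each such $X$ to be a product of atoms by strong induction on $|X|$. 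Everything rests on the following \textbf{Claim}: \emph{if $X \in \mathcal{P}_{\fun}(H)$ is not an atom and $|X| \ge 2$, then $X = YZ$ for some $Y, Z \in \mathcal{P}_{\fun}(H)$ with $2 \le |Y| < |X|$ and $2 \le |Z| < |X|$.} Granting the Claim, the induction closes immediately: when $|X| = 2$, Lemma \ref{lem:2-elt-atoms}\ref{it:lem:2-elt-atoms(i)} makes $X$ an atom (this is exactly where the hypothesis enters at the base step), while for $|X| \ge 3$ either $X$ is an atom or the Claim splits it into two strictly smaller non-units, each a product of atoms by the inductive hypothesis, the concatenation of which factors $X$.

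To prove the Claim, start from a factorization $X = YZ$ into non-units, which exists because $X$ is not an atom. As $1_H$ lies in both $Y$ and $Z$, Proposition \ref{prop:pm-arith}\ref{it:prop:pm-arith(i)} gives $Y \cup Z \subseteq YZ = X$, so $|Y|, |Z| \le |X|$; if both inequalities are strict we are done, since a non-unit of $\mathcal{P}_{\fun}(H)$ has at least two elements. Otherwise $Y = X$ or $Z = X$, that is $X = XZ$ or $X = YX$ with a non-unit second (resp.\ first) factor; by the evident left--right symmetry, assume $X = XZ$ with $Z$ a non-unit. Iterating gives $X = XZ^k$ for all $k \ge 1$, whence $Z^k = \{1_H\}Z^k \subseteq XZ^k = X$, so that $M := \bigcup_{k \ge 1} Z^k$ is a finite submonoid of $H$ contained in $X$. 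Applying Lemma \ref{lem:no-non-id-elts-of-small-order-implies-structure}\ref{it:lem:no-non-id-elts-of-small-order-implies-structure(ii)} to each $m \in M$ (legitimate because $\gen{m}_H \subseteq M$ is finite) shows that $M \subseteq H^\times$ and that $M$ is closed under inversion, so $M$ is a finite subgroup of $H$; moreover $\{1_H\} \ne Z \subseteq M$ forces $|M| \ge 2$, and $|M| \ne 2$ because $H$ has no element of order $2$, hence $|M| \ge 3$. From $X = XM$ we see that $X$ is a union of the cosets $xM$, $x \in X$, of the subgroup $M$, and these are pairwise equal or disjoint; picking one representative per distinct coset, with $1_H$ representing the coset $M \subseteq X$, yields $X = TM$ for some $T \in \mathcal{P}_{\fun}(H)$. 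If $|T| \ge 2$, then $X$ is a disjoint union of $|T|$ cosets, one of which is $M$ with $|M| \ge 3$, so $|X| \ge |M| + (|T| - 1) > \max\{|M|, |T|\}$, and $X = TM$ is the required splitting. If $|T| = 1$, then $X = M$ is a finite group of order $\ge 3$, and it remains to write such a group as a product of two subsets, each containing $1_H$ and of cardinality strictly between $1$ and $|M|$: if $M$ has a proper nontrivial subgroup $K$, take a coset transversal $T \ni 1_H$ and use $M = TK$; otherwise $M \cong \mathbf{Z}/p\mathbf{Z}$ for a prime $p \ge 3$, and $\{0,1\} + \{0,1,\dots,p-2\} = \mathbf{Z}/p\mathbf{Z}$ works. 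This proves the Claim, and with it the theorem.

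The heart of the matter --- and the only place beyond the two-element base case where the hypothesis on $H$ genuinely intervenes --- is the degenerate branch $X = XZ$: a crude induction on $|X|$ stalls there, because the cardinalities of the factors need not drop. The way out is to notice that such an absorption identity traps $Z$ inside a finite submonoid $M$ of $H$, which the hypothesis promotes to a cyclic group of order $\ge 3$ satisfying $XM = X$, so that $X$ decomposes as a union of cosets of $M$; reading off this decomposition restores a factorization with both factors strictly smaller than $X$. The lone genuinely combinatorial ingredient is the elementary fact that every finite group of order $\ge 3$ is a product of two proper subsets each containing the identity, whose only not-quite-obvious case (prime order) reduces to a one-line sumset computation in $\mathbf{Z}/p\mathbf{Z}$ --- fittingly, given the paper's theme.
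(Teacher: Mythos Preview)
Your proof is correct, but it diverges from the paper's in the handling of the degenerate case. Both arguments proceed by strong induction on $|X|$, share the same base case, and are only interesting when $X = YZ$ with $|Y| = |X|$ or $|Z| = |X|$. The paper treats this by a purely elementary element-removal trick: from $X = AB$ with $A = X$ and $B \subseteq X$, it deletes a single element $b \in B \setminus \{1_H\}$ from $A$ to get $A_b$, checks that $A_b B = X$ still holds, and (if $B = X$ too) deletes a different element $a$ from $B$ to obtain $X = A_b B_a$ with $|A_b|, |B_a| = |X| - 1$. No group theory enters beyond the base case.

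You instead exploit the absorption $X = XZ$ structurally: iterating gives $Z^k \subseteq X$ for all $k$, so $M := \bigcup_{k \ge 1} Z^k$ is a finite submonoid, which the standing hypothesis (via Lemma~\ref{lem:no-non-id-elts-of-small-order-implies-structure}\ref{it:lem:no-non-id-elts-of-small-order-implies-structure(ii)}) upgrades to a finite subgroup of order $\ge 3$ with $XM = X$; a coset decomposition then yields the desired splitting, with the residual case $X = M$ handled by a direct argument for finite groups. This route is heavier in that it invokes the group structure and a small side claim about factoring finite groups, but it buys a clearer picture of what actually forces the cardinality to drop: the hypothesis guarantees that any ``absorbed'' factor generates a genuine subgroup, and subgroups of order $\ge 3$ can always be peeled off. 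The paper's argument, by contrast, is shorter and entirely self-contained, showing that the degenerate branch can be resolved by a two-line set manipulation without ever identifying $M$. Your closing remark that ``a crude induction stalls'' at $X = XZ$ is therefore a little pessimistic: the paper shows it does not, provided one is willing to tamper with the factors by hand rather than look for structure.
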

\begin{proof}
	The ``only if'' part is a consequence of Lemma \ref{lem:2-elt-atoms}\ref{it:lem:no-non-id-elts-of-small-order-implies-structure(ii)}.
	As for the other direction, assume that $1_H \ne x^2 \ne x$ for each $x\in H\setminus\{1_H\}$, and fix $X \in \mathcal P_{\fun}(H)$ with $|X| \ge 2$.
	We wish to show that 
	\[
	X = A_1 \cdots A_n, \quad\text{for some }A_1, \ldots, A_n \in \mathscr{A}(\mathcal P_{\fin,1}(H)).
	\]
	If $X$ is a $2$-element set, the claim is true by Lemma \ref{lem:2-elt-atoms}\ref{it:lem:no-non-id-elts-of-small-order-implies-structure(i)}. So let $|X| \ge 3$, and suppose inductively that every $Y \in \PPc_\fun(H)$ with $2 \le |Y| < |X|$ is a product of atoms. If $X$ is an atom, we are done.
	Otherwise, $X = A B$ for some non-units $A, B \in \PPc_\fun(H)$, and by symmetry we can assume $|X| \ge |A| \ge |B| \ge 2$. 
	
	If $|A| < |X|$, then both $A$ and $B$ factor into a product of atoms (by the inductive hypothesis), and so too does $X=A B$. Consequently, we are only left to consider the case when $|X| = |A|$. 
	
	For, we notice that 
	$
	A \cup B \subseteq A B = X 
	$
	(because $1_H \in A \cap B$), and this is only possible if $A = X$ (since $|A| = |X|$ and $A \subseteq X$). So, to summarize, we have that
	\begin{equation}\label{equ:containments}
		|X| \ge 3, \quad |B| \ge 2, \quad\text{and}\quad B \subseteq AB = X = A.
	\end{equation}
	In particular, since $B$ is not a unit of $\mathcal P_{\fin,1}(H)$, we can choose an element $b\in B\setminus\{1_H\} \subseteq A$. Hence, taking $A_b := A \setminus \{b\}$, we have $|A_b| < |A|$, and it is easy to check that $A_b B = A = X$ (in fact, $1_H$ is in $A_b \cap B$, and therefore we derive from \eqref{equ:containments} that $
	A_b B \subseteq A = A_b \cup \{b\} \subseteq A_b B \cup \{b\} \subseteq A_b B \cup B = A_b B$).
	
	If $|B|<|A|$, then we are done, because $A_b$ and $B$ are both products of atoms (by the inductive hypothesis), and thus so is $X = AB = A_bB$.
	Otherwise, it follows from \eqref{equ:containments} and the above that 
	\begin{equation}\label{equ:further-identities}
		X = A = B = A_b B \quad\text{and}\quad |A| \ge 3, 
	\end{equation}
	so we can choose an element $a \in A \setminus\{1_H,b\}$. Accordingly, set $B_a := B \setminus \{a\}$. Then $|B_a| < |B|$ (because $A = B$ and $a \in A$), and both $A_b$ and $B_a$ decompose into a product of atoms (again by induction). But this finishes the proof, since it is straightforward from \eqref{equ:further-identities} that $X = A = A_b B_a$ (indeed, $1_H \in A_b \cap B_a$ and $b \in B_a$, so we find that $
	A_b B_b \subseteq A = A_b \cup \{b\} \subseteq A_b B_a \cup \{b\} \subseteq A_b B_a \cup B_a = A_b B_a$).
\end{proof}

Now with Proposition \ref{prop:equimorphism} and Theorem \ref{th:atomicity} in hand, we can engage in a finer study of the arithmetic of power monoids; in particular, we may wish to study their (systems of) sets of lengths. However, we are immediately met with a ``problem'' (i.e., some sets of lengths are infinite in a rather trivial way):
\begin{example}\label{unbounded-fzn}
	Let $H$ be a monoid with an element $x$ of finite odd order $m \ge 3$, and set $X := \{x^k : k \in \mathbf N \}$. Then it is clear that $X$ is the setwise product of $n$ copies of $\{1_H, x\}$ for every $n \ge m$. This shows that the set of lengths of $X$ relative to $\PPc_\fun(H)$ contains $\llb m, \infty \rrb$ (and hence is infinite), since we know from Lemma \ref{lem:2-elt-atoms} that $\{1_H, x\}$ is an atom of $\PPc_\fun(H)$.
\end{example}
The nature of this problem is better clarified by our next result, and we will more thoroughly address it in \S{ }\ref{sec:minimal-factorizations}.
\begin{theorem}\label{thm:BF-torsion}
Let $H$ be a monoid. The following hold:
\begin{enumerate}[label={\rm (\roman{*})}]
\item\label{it:thm:BF-torsion(i)} If $H$ is torsion-free and $X\in \PPc_\fun(H)$, then $\sup \mathsf{L}_{\mathcal P_\fun(H)}(X) \le |X|^2-|X|$.
\item\label{it:thm:BF-torsion(ii)} $\PPc_\fun(H)$ is \textup{BF} if and only if $H$ is torsion-free.
\item\label{it:thm:BF-torsion(iii)} $\PPc_\funt(H)$ is \textup{BF} if and only if so is $\mathcal P_\fun(H)$.
\end{enumerate}
\end{theorem}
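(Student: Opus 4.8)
The plan is to prove part~\ref{it:thm:BF-torsion(i)} first, since it carries all the arithmetic content, and then to deduce parts~\ref{it:thm:BF-torsion(ii)} and~\ref{it:thm:BF-torsion(iii)} by feeding it into Theorem~\ref{th:atomicity}, Lemmas~\ref{lem:2-elt-atoms} and~\ref{lem:no-non-id-elts-of-small-order-implies-structure}, Example~\ref{unbounded-fzn}, and Proposition~\ref{prop:funt&fun-have-the-same-system-of-lengths}. The observation that makes everything fit together is that a torsion-free monoid automatically satisfies the hypothesis $1_H \ne x^2 \ne x$ of Theorem~\ref{th:atomicity} (a non-identity idempotent has order $1$ and a non-identity square root of $1_H$ has order $2$), so in that case $\mathcal P_\fun(H)$ is atomic (Theorem~\ref{th:atomicity}) and $H$ is Dedekind-finite (Lemma~\ref{lem:no-non-id-elts-of-small-order-implies-structure}\ref{it:lem:no-non-id-elts-of-small-order-implies-structure(i)}) --- two facts I would record at the outset and use freely below.

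For part~\ref{it:thm:BF-torsion(i)}, I would fix $X \in \mathcal P_\fun(H)$ and an arbitrary factorization $X = A_1 \cdots A_n$ with $A_1, \dots, A_n \in \mathscr A(\mathcal P_\fun(H))$, and pass to the partial products $P_0 := \{1_H\}$ and $P_k := A_1 \cdots A_k$ for $k \in \llb 1, n \rrb$. Applying Proposition~\ref{prop:pm-arith}\ref{it:prop:pm-arith(i)} to $P_{k-1}\cdot A_k$ and to $P_k \cdot (A_{k+1} \cdots A_n)$ gives the chain $\{1_H\} = P_0 \subseteq P_1 \subseteq \cdots \subseteq P_n = X$ with $P_k \subseteq X$ for all $k$. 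The key sublemma I would then establish is that \emph{every inclusion $P_{k-1} \subseteq P_k$ is strict}; granting it, $1 = |P_0| < |P_1| < \cdots < |P_n| = |X|$ forces $n \le |X| - 1 \le |X|^2 - |X|$, and since the factorization was arbitrary this yields $\sup \mathsf L_{\mathcal P_\fun(H)}(X) \le |X|^2 - |X|$ (the argument in fact yields the sharper $\le |X| - 1$). To prove the sublemma I would argue by contradiction: if $P_{k-1} = P_k = P_{k-1}A_k$, then $P_{k-1}a \subseteq P_{k-1}$ for every $a \in A_k$, so $A_k$ is contained in $S := \{h \in H : P_{k-1}h \subseteq P_{k-1}\}$; but $S$ is a submonoid of $H$ and $S \subseteq P_{k-1}$ because $1_H \in P_{k-1}$, hence $S$ is finite, so for every $s \in S$ the set $\gen{s}_H \subseteq S$ is finite, i.e.\ $s$ has finite order, i.e.\ $s = 1_H$ by torsion-freeness; thus $S = \{1_H\}$ and $A_k = \{1_H\}$, which is impossible since $A_k$ is an atom of the reduced monoid $\mathcal P_\fun(H)$.

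Parts~\ref{it:thm:BF-torsion(ii)} and~\ref{it:thm:BF-torsion(iii)} I would then dispatch by case analysis. For~\ref{it:thm:BF-torsion(ii)}: when $H$ is torsion-free, $\mathcal P_\fun(H)$ is atomic (Theorem~\ref{th:atomicity}) with bounded sets of lengths (part~\ref{it:thm:BF-torsion(i)}), hence is BF; when $H$ is not torsion-free, either $1_H = x^2$ or $x^2 = x$ for some $x \ne 1_H$, in which case $\mathcal P_\fun(H)$ is not atomic by Theorem~\ref{th:atomicity}, or else some $y \ne 1_H$ has finite order and Lemma~\ref{lem:no-non-id-elts-of-small-order-implies-structure}\ref{it:lem:no-non-id-elts-of-small-order-implies-structure(ii)} makes $\gen{y}_H$ a cyclic group of some order $m \ge 3$, which must be odd (otherwise $y^{m/2}$ would be a non-identity square root of $1_H$), so Example~\ref{unbounded-fzn} produces an element of $\mathcal P_\fun(H)$ with infinite set of lengths; either way $\mathcal P_\fun(H)$ is not BF. For~\ref{it:thm:BF-torsion(iii)}: when $H$ is torsion-free it is Dedekind-finite, so Proposition~\ref{prop:funt&fun-have-the-same-system-of-lengths} is available, and it gives both that $\mathcal P_\funt(H)$ is atomic (a non-unit of $\mathcal P_\funt(H)$ is a unit times a non-unit of $\mathcal P_\fun(H)$, which factors by Theorem~\ref{th:atomicity}, while atoms of $\mathcal P_\fun(H)$ remain atoms of $\mathcal P_\funt(H)$ up to units, part~\ref{it:prop:funt&fun-have-the-same-system-of-lengths(ii)}) and that $\mathscr L(\mathcal P_\funt(H)) = \mathscr L(\mathcal P_\fun(H))$ (part~\ref{it:prop:funt&fun-have-the-same-system-of-lengths(iv)}), whose members are finite by part~\ref{it:thm:BF-torsion(i)} --- hence $\mathcal P_\funt(H)$ is BF; conversely, if $\mathcal P_\funt(H)$ is BF it is atomic, so Lemma~\ref{lem:2-elt-atoms}\ref{it:lem:2-elt-atoms(ii)} forces $1_H \ne x^2 \ne x$ for all $x \ne 1_H$, whence $H$ is Dedekind-finite, $\mathcal P_\fun(H)$ is atomic, and Proposition~\ref{prop:funt&fun-have-the-same-system-of-lengths}\ref{it:prop:funt&fun-have-the-same-system-of-lengths(iv)} pushes finiteness of the sets of lengths back to $\mathcal P_\fun(H)$, making it BF, so that $H$ is torsion-free by part~\ref{it:thm:BF-torsion(ii)}.

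I expect the main obstacle to be the strictness sublemma in part~\ref{it:thm:BF-torsion(i)} --- it is the only place torsion-freeness is genuinely used --- and, within it, the observation that the right stabilizer $\{h \in H : Yh \subseteq Y\}$ of a finite set $Y \ni 1_H$ is a \emph{finite} submonoid of $H$, and therefore trivial when $H$ is torsion-free. The rest is bookkeeping: keeping the two dichotomies (``$1_H \ne x^2 \ne x$'' holds or not; $H$ torsion-free or not) straight, and, for the statements about $\mathcal P_\funt(H)$, routing the transfer through the equimorphism $\mathcal P_\fun(H) \hookrightarrow \mathcal P_\funt(H)$ of Proposition~\ref{prop:funt&fun-have-the-same-system-of-lengths}, which is only licensed once Dedekind-finiteness has been secured.
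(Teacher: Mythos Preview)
Your argument is correct. Parts~\ref{it:thm:BF-torsion(ii)} and~\ref{it:thm:BF-torsion(iii)} are handled essentially as in the paper (the paper cites an external reference for one direction of~\ref{it:thm:BF-torsion(iii)}, whereas you close the loop internally via Lemma~\ref{lem:2-elt-atoms}\ref{it:lem:2-elt-atoms(ii)}, which is a mild improvement in self-containedness).

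The real difference is in part~\ref{it:thm:BF-torsion(i)}. The paper argues by Pigeonhole: if a factorization has length $\ell>|X|(|X|-1)$, then since each atom $A_i$ is contained in $X$ and contributes a non-identity element, some fixed $x\in X\setminus\{1_H\}$ lies in at least $|X|$ of the $A_i$'s, so $1_H,x,x^2,\dots,x^{|X|}$ all land in $X$, contradicting torsion-freeness. Your route is instead to show the chain of partial products $P_0\subsetneq P_1\subsetneq\cdots\subsetneq P_n$ is strictly increasing, using that the right stabilizer $\{h:P_{k-1}h\subseteq P_{k-1}\}$ is a finite submonoid of $H$ and hence trivial when $H$ is torsion-free. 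This is a genuinely different mechanism, and it buys you the sharper bound $\sup\mathsf L_{\mathcal P_\fun(H)}(X)\le |X|-1$, which is in fact optimal (take $X=\llb 0,k\rrb$ in $\mathcal P_{\fin,0}(\mathbf N)$). It is also the same strict-chain idea that the paper later exploits in Proposition~\ref{prop:bounded-minimal-fzn}\ref{it:prop:bounded-minimal-fzn(i)} for \emph{minimal} factorizations in arbitrary $H$; you are observing that under torsion-freeness the argument works for \emph{all} factorizations, not just minimal ones. The paper's Pigeonhole approach, by contrast, is quicker to state but overshoots the bound by a factor of $|X|$.
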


\begin{proof}
	\ref{it:thm:BF-torsion(i)}
	Set $n := |X| \in \mathbf N^+$, fix an integer $\ell \ge (n-1)n + 1$, and suppose for a contradiction that $X = A_1\cdots A_\ell$ for some $A_1,\dots, A_\ell\in \mathscr{A}(\PPc_\fun(H))$.
	By the Pigeonhole Principle, there are an element $x\in X$ and a subset $I \subseteq \llb 1,\ell \rrb$ such that $m := |I| \ge n$ and $x \in A_i$ for each $i\in I$.
	So, writing $I = \{i_1,\ldots, i_m\}$, we find that $x^k \in A_{i_1} \cdots A_{i_k} \subseteq A_1\cdots A_\ell = X$ for every $k \in \llb 1, m \rrb$, i.e., $\{1_H,x,\dots, x^m\} \subseteq X$.
	However, since $H$ is torsion-free, each power of $x$ is distinct, and hence $n = |X| \ge m+1 > n$ (a contradiction).
	
	\ref{it:thm:BF-torsion(ii)}
	First suppose for a contradiction that $\PPc_\fun(H)$ is BF and has an element $x$ of finite order $m$. Then $\PPc_\fun(H)$ is also atomic, and we know by Theorem \ref{th:atomicity} and Lemma \ref{lem:no-non-id-elts-of-small-order-implies-structure}\ref{it:lem:no-non-id-elts-of-small-order-implies-structure(ii)} that $x^m = 1_H$.
	If $m$ is even, then $(x^{m/2})^2 = 1_H$, contradicting the atomicity of $\PPc_\fun(H)$ since, by Theorem \ref{th:atomicity}, no non-identity element of $H$ can have order 2. 
	If $m$ is odd, then Example \ref{unbounded-fzn} shows that the set of lengths of $\{x^k : k \in \mathbf N \}$ is infinite, contradicting the assumption that $\PPc_\fun(H)$ is BF.
	
	Conversely, assume $H$ is torsion-free. Then all powers of non-identity elements are distinct, so Theorem \ref{th:atomicity} implies that $\PPc_\fun(H)$ is atomic, and \ref{it:thm:BF-torsion(i)} gives an explicit upper bound on the lengths of factorizations.
	
	\ref{it:thm:BF-torsion(iii)} 
	The ``only if'' part follows from \cite[Theorem 2.28(iv) and Corollary 2.29]{FTr17}, so suppose that $\PPc_\fun(H)$ is BF.
	Then $\PPc_\fun(H)$ is atomic, and hence, by Theorem \ref{th:atomicity}, $1_H \neq x^2 \neq x$ for all $x\in H\setminus\{1_H\}$.
	By Lemma \ref{lem:no-non-id-elts-of-small-order-implies-structure}\ref{it:lem:no-non-id-elts-of-small-order-implies-structure(i)}, this implies that $H$ is Dedekind-finite, so the natural embedding $\PPc_\fun(H)\hookrightarrow\PPc_\funt(H)$ is an essentially surjective equimorphism by Proposition \ref{prop:funt&fun-have-the-same-system-of-lengths}\ref{it:prop:funt&fun-have-the-same-system-of-lengths(i)}.
	The result is then an immediate consequence of Proposition  \ref{prop:funt&fun-have-the-same-system-of-lengths}\ref{it:prop:funt&fun-have-the-same-system-of-lengths(iv)}.
\end{proof}

\section{Minimal factorizations and conditions for bounded minimal lengths}
\label{sec:minimal-factorizations}
Example \ref{unbounded-fzn} indicates that, in the presence of torsion in the ground monoid $H$, sets of lengths in $\PPc_\fun(H)$ blow up in a predictable fashion, with the result that most of the invariants classically studied in Factorization Theory lose their significance. 
In the case of Example \ref{unbounded-fzn}, this phenomenon is due to the existence of non-trivial idempotents and has been previously addressed by many authors in the literature on \emph{commutative} rings and monoids (see Remarks \ref{rem:chun&anderson} and \ref{rem:geroldinger&lettl}).
Here we strive for a ``natural approach'' that applies to \emph{arbitrary} monoids, spurring us to consider a refinement of the notions introduced in \S{ }\ref{subsec:factorizations} and to investigate some of their fundamental properties (see, in particular, Definition \ref{def:min-fac} and Proposition \ref{prop:min-equi}), before focusing on the special case of power monoids.

\subsection{Minimal factorizations.}
\label{subsec:min-factorizations}
We start with the definition of a binary relation (in fact, a preorder) on the $\mathscr A(H)$-words of a monoid $H$ that we shall use to ``filter out the redundant factors'' that may contribute to the factorizations of an element of $H$
(recall that, given a set $X$, we denote by $\mathscr{F}(X)$ the free monoid with basis $X$ and by $\varepsilon_X$ the identity of $\mathscr{F}(X)$).
\begin{definition}\label{def:preorder}
	Let $H$ be a monoid. We denote by $\preceq_H$ 
	the binary relation on $\mathscr{F}(\mathscr A(H))$ determined by taking $\mathfrak a \preceq_H \mathfrak b$, for some  
	$\mathscr A(H)$-words $\mathfrak a$ and $\mathfrak b$, if and only if either $\mathfrak a = \mathfrak b = \varepsilon_{\mathscr{A}(H)}$; or $\mathfrak a$ and $\mathfrak b$ are non-empty words of length $m$ and $n$ respectively, say $\mathfrak a = a_1 \ast \cdots \ast a_m$ and $\mathfrak b = b_1 \ast \cdots \ast b_n$, such that $\pi_H(\mathfrak a) = \pi_H(\mathfrak b)$ and
	$a_{\sigma(1)} \simeq_H b_1$, \ldots, $a_{\sigma(m)} \simeq_H b_m$ for some injection $\sigma: \llb 1, m \rrb \to \llb 1, n \rrb$.
	
	We say an $\mathscr A(H)$-word $\mathfrak{a}$ is \emph{$\preceq_H$-minimal}, or simply \emph{minimal} (when no confusion can arise), if there is no $\AAc(H)$-word $\mathfrak{b}$ with $\mathfrak{b} \prec_H \mathfrak{a}$, where $\mathfrak{b} \prec_H \mathfrak{a}$ means that $\mathfrak{b} \preceq_H \mathfrak{a}$ but $\mathfrak{a} \not\preceq_H \mathfrak{b}$.
\end{definition}
\begin{remark}
The preorder $\preceq_H$ in Definition \ref{def:preorder} is closely related to the ``divides-up-to-permutation'' relation $\mid_p$ considered by N.\,R.~Baeth and D.~Smertnig in \cite[Definition 5.2(2)]{BaSm15}, when the latter is spe\-cial\-iz\-ed to the free monoid $\mathscr F(\mathscr A(H))$: More precisely, $\mathfrak a \preceq \mathfrak b$ if and only if $\mathfrak a \mid_p \mathfrak b$ and $\pi_H(\mathfrak a) = \pi_H(\mathfrak b)$.
\end{remark}
The next result highlights a few basic properties of the relation introduced in Definition \ref{def:preorder}.
\begin{proposition}\label{preorder-facts}
Let $H$ be a monoid, and let $\mathfrak{a},\mathfrak{b}\in \mathscr{F}(\AAc(H))$. The following hold:
\begin{enumerate}[label = {\textup{(\roman{*})}}]
\item\label{it:prop:preorder-facts(0)} $\preceq_H$ is a preorder \textup{(}i.e., a reflexive and transitive binary relation\textup{)} on $\mathscr{F}(\mathscr A(H))$.
\item\label{it:prop:preorder-facts(ii)} If $\mathfrak{a} \preceq_H \mathfrak{b}$ then $\| \mathfrak{a} \|_H \le \| \mathfrak{b} \|_H$.
\item\label{it:prop:preorder-facts(iv)} $\mathfrak{a} \preceq_H \mathfrak{b}$ and $\mathfrak{b} \preceq_H \mathfrak{a}$ if and only if $\mathfrak a \preceq_H \mathfrak b$ and $\|\mathfrak a\|_H = \|\mathfrak b\|_H$, if and only if $(\mathfrak{a},\mathfrak{b}) \in \CCc_H$.
\end{enumerate}
\end{proposition}
\begin{proof}
Points \ref{it:prop:preorder-facts(0)} and \ref{it:prop:preorder-facts(ii)} are straightforward from our definitions.

As for \ref{it:prop:preorder-facts(iv)}, set $h := \|\mathfrak a\|_H$ and $k := \|\mathfrak b\|_H$. 
By part \ref{it:prop:preorder-facts(ii)}, $\mathfrak{a} \preceq_H \mathfrak{b}$ and $\mathfrak{b} \preceq_H \mathfrak{a}$ only if  $\mathfrak a \preceq_H \mathfrak b$ and $h = k$; and it is immediate to check that $(\mathfrak{a},\mathfrak{b}) \in \CCc_H$ implies $\mathfrak{a} \preceq_H \mathfrak{b}$ and $\mathfrak{b} \preceq_H \mathfrak{a}$. 
So, to finish the proof, assume that $\mathfrak a \preceq_H \mathfrak b$ and $h = k$. We only need to show that $(\mathfrak a, \mathfrak b) \in \mathscr C_H$. 
For, we have (by definition) that $\mathfrak a \preceq_H \mathfrak b$ if and only if $\pi_H(\mathfrak a) = \pi_H(\mathfrak b)$ and there is an injection $\sigma: \llb 1, h \rrb \to \llb 1, k \rrb$ 
such that $a_i \simeq_H b_{\sigma(i)}$ for every $i \in \llb 1, h \rrb$. But $\sigma$ is actually a bijection (because $h = k$), and we can thus conclude that $(\mathfrak a, \mathfrak b) \in \mathscr C_H$.
\end{proof}
\begin{definition}\label{def:min-fac}
Let $H$ be a monoid, and let $x \in H$. We refer to a $\preceq_H$-minimal $\mathscr A(H)$-word $\mathfrak{a}$ such that $x = \pi_H(\mathfrak a)$ as a \emph{$\preceq_H$-minimal factorization} of $x$, or simply as a \emph{minimal factorization} of $x$ provided $H$ is clear from context. Accordingly, we denote by
\[
\mathcal{Z}_{H}^\m(x) := \left\{ \mathfrak{a}\in \mathcal{Z}_H(x): \mathfrak{a} \textrm{ is $\preceq_H$-minimal} \right\}
\quad\text{and}\quad
\mathsf{Z}_{H}^\m(x):= \mathcal{Z}_{H}^\m(x)/\mathscr{C}_H
\]
the set of $\preceq_H$-minimal factorizations and the set of $\preceq_H$-minimal factorization classes of $x$, respectively (cf. the definitions from \S{ }\ref{subsec:factorizations}). In addition, we take 
\[
\mathsf{L}_{H}^\m(x) := \left\{ \|\mathfrak{a}\|_H : \mathfrak{a} \in \mathcal{Z}_{H}^\m(x) \right\} \subseteq \mathbf N
\]
to be the set of \emph{$\preceq_H$-minimal factorization lengths} of $x$, and
\[
	\LLc^\m(H) := \left\{ \mathsf{L}_{H}^\m(x) : x\in H \right\} \subseteq \mathcal P(\mathbf N)
\]
to be the \emph{system of sets of $\preceq_H$-minimal lengths} of $H$. Lastly, we say that the monoid $H$ is
\begin{itemize}
	\item \textup{BmF} or \emph{bounded-minimally-factorial} (respectively, \textup{FmF} or \emph{finite-minimally-factorial}) if $\mathsf{L}_{H}^\m(x)$ (respectively, $\mathsf{Z}_{H}^\m(x)$) is finite and non-empty for every $x\in H\setminus H^\times$;
	
	\item \textup{HmF} or \emph{half-minimally-factorial} (respectively, \emph{minimally factorial}) if $\mathsf{L}_{H}^\m(x)$ (respectively, $\mathsf{Z}_{H}^\m(x)$) is a singleton for all $x \in H \setminus H^\times$.
\end{itemize}
Note that we may write $\mathcal{Z}^\m(x)$ for $\mathcal{Z}_{H}^\m(x)$, $\mathsf {L}^\m(x)$ for $\mathsf {L}_{H}^\m(x)$, etc. if there is no likelihood of confusion.
\end{definition}

\begin{remark} \label{rem:chun&anderson}
To the best of our knowledge, analogues of the notions introduced in Definition \ref{def:min-fac} have only been considered so far in a \emph{commutative} setting, with one significant example being offered by the work of S.~Chun, D.\,D.~Anderson, and S.~Valdes-Leon \cite{ChAnVLe11} on ``reduced factorizations''.

In detail, let $H$ be the multiplicative monoid of a (unital) ring $R$ and fix a set $\mathcal A \subseteq R$.
We say that a non-empty $\mathcal A$-word $a_1 \ast \cdots \ast a_n$ of length $n$ is a \emph{minimal $\mathcal A$-factorization} of an element $x \in R$ if $\pi_H(\mathfrak a) = x$ but $x \ne \pi_H(\mathfrak b)$ for every non-empty $\mathcal A$-word $\mathfrak b = b_1 \ast \cdots \ast b_m$ of length $m \le n-1$ for which there exists an injection $\sigma: \llb 1, m \rrb \to \llb 1, n \rrb$ such that $b_i \simeq_H a_{\sigma(i)}$ for each $i \in \llb 1, m \rrb$. 

A minimal $\mathscr A(H)$-factorization of a non-unit $x \in R$ is the same as a $\preceq_H$-minimal factorization of $x$ (as per Definition \ref{def:min-fac}). Moreover, it follows from \S{ }\ref{sub:other-factorizations} and Proposition \ref{prop:unit-adjust}\ref{it:prop:unit-adjust(ii)} that, if $R$ is a \emph{commutative} ring and $x$ is not the zero of $R$, then a minimal $\mathscr A(H)$-factorization of $x$ is, in the parlance of \cite[Definition 2.1 and \S{ }3]{ChAnVLe11}, essentially the same as a \emph{strongly $\mu$-reduced $\mu$-factorization} of $x$ into very strongly irreducible elements of $R$. 
Insofar as the discussion is restricted to commutative rings, one can thus refer to \cite{ChAnVLe11} and \cite{AxFoRoSt03} for a comparison of our approach to the study of ``minimal factorizations'' with others in the literature, including the one by C.\,R.~Fletcher \cite{Fl69} and generalizations thereof where the set $\mathcal A$ in the above consists of various types of ``irreducible elements'' of $R$ (cf. \S{ }\ref{sub:other-factorizations}).
\end{remark}

\begin{remark}\label{rem:geroldinger&lettl}
Another approach for managing the ``excess factorizations'' arising from the presence of torsion (though still in a commutative setting), was outlined by A.~Geroldinger and G.~Lettl in \cite{GeLe90}. 

In short, let $H$ be a \emph{commutative} monoid and denote by $\mathcal A$ the set of all $a \in H \setminus H^\times$ such that $b \mid_H a$ only if $b \in H^\times$ or $aH = bH$. Given $u \in H$, we define 
\[
\textrm{ind}_H^{\rm GL}(u) := \inf \{r\in \mathbf{N}: u^i H = u^j H \textrm{ for all } i, j \ge r\}.
\]
Accordingly, we take a \emph{\textup{GL}-factorization} of a non-unit $x \in H$ to be a non-empty $\mathcal A$-word $\mathfrak a = a_1 \ast \cdots \ast a_n$ such that $\pi_H(\mathfrak a) = x$ and $\mathsf v_a^H(\mathfrak a) \le \text{ind}_H^{\rm GL}(a)$ for every $a \in \mathcal A$, where
\[
\mathsf v_a^H(\mathfrak a) := \bigl|\{i \in \llb 1, n \rrb: a_i = a\}\bigr|.
\]
A \textup{GL}-factorization is fundamentally the same as the ``canonical form'' of a factorization in the sense of \cite{GeLe90}; and since it is easily checked that $\mathscr A(H) \subseteq \mathcal A$, every $\preceq_H$-minimal factorization is also a \textup{GL}-factorization. Moreover, the two notions coincide on the level of commutative, unit-cancellative monoids, in which case $\mathcal A = \mathscr A(H)$ and $\text{ind}_H^{\rm GL}(u) = \infty$ for every non-unit $u \in H$.
However, big differences exist in general. E.g., it follows from Lemma \ref{lem:2-elt-atoms}\ref{it:lem:2-elt-atoms(i)} and the above that $\{\bar{0}, \bar{1}\} \ast \{\bar{0}, \bar{2}\} \ast \{\bar{0}, \bar{3}\} \ast \{\bar{0}, \bar{4}\}$ is a \textup{GL}-factorization of $\mathbf Z/5\mathbf Z$ in the reduced power monoid of the cyclic group $(\mathbf Z/5\mathbf Z, +)$; but is not a minimal factorization as per Definition \ref{def:min-fac}, because $\mathbf Z/5\mathbf Z = \{\bar{0}, \bar{1}\} + \{\bar{0}, \bar{2}\} + \{\bar{0}, \bar{3}\}$.
\end{remark}
It is helpful, at this juncture, to observe some fundamental features of minimal factorizations.
\begin{proposition}\label{prop:min-basics}
	Let $H$ be a monoid and let $x\in H$. The following hold:
	\begin{enumerate}[label = {\rm (\roman{*})}]
		\item\label{it:prop:min-basics(i)} Any $\AAc(H)$-word of length $0$, $1$, or $2$ is minimal.
		\item\label{it:prop:min-basics(ii)} $\ZZc_H(x) \ne \emptyset$ if and only if  $\ZZc_H^\m(x) \ne \emptyset$.
		\item\label{it:prop:min-basics(iib)} If $\mathfrak a \in \mathcal Z_H^\m(x)$ and $(\mathfrak a, \mathfrak b) \in \mathscr C_H$, then $\mathfrak b \in \mathcal Z_H^\m(x)$.
		\item\label{it:prop:min-basics(iii)} If $K$ is a divisor-closed submonoid of $H$ and $x \in K$, then $\ZZc_K^\m(x) = \ZZc_H^\m(x)$ and $\mathsf L_K^\m(x) = \mathsf L_H^\m(x)$.
		\item\label{it:prop:min-basics(iv)} If $H$ is commutative and unit-cancellative, then $\ZZc_H^\m(x) = \ZZc_H(x)$, and hence $\mathsf L_H^\m(x) = \mathsf L_H(x)$.
	\end{enumerate}
\end{proposition}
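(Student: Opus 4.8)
The plan is to derive all five items by short arguments from Definitions \ref{def:preorder} and \ref{min-fac} together with the formal properties of $\preceq_H$ collected in Proposition \ref{preorder-facts}; the only point needing a genuinely structural input is part \ref{it:prop:min-basics(iii)}. For \ref{it:prop:min-basics(i)} I would argue by the word length. The empty word is minimal because, by Proposition \ref{preorder-facts}\ref{it:prop:preorder-facts(ii)}, nothing of positive length can lie $\preceq_H$-below it. If $\mathfrak a$ has length $1$ or $2$ and $\mathfrak b \prec_H \mathfrak a$, then $\|\mathfrak b\|_H < \|\mathfrak a\|_H$ — equality of lengths would force $(\mathfrak a,\mathfrak b)\in\mathscr C_H$ by Proposition \ref{preorder-facts}\ref{it:prop:preorder-facts(iv)}, hence $\mathfrak a \preceq_H \mathfrak b$, contradicting $\mathfrak b \prec_H \mathfrak a$. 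In the length-$1$ case this already forces $\mathfrak b = \varepsilon_{\mathscr A(H)}$, which is impossible by Proposition \ref{preorder-facts}\ref{it:prop:preorder-facts(i)}; in the length-$2$ case, write $\mathfrak a = a_1 \ast a_2$, so one is left with $\mathfrak b = \varepsilon_{\mathscr A(H)}$ (again impossible) or $\mathfrak b$ a single atom $b$, in which case $b = \pi_H(b) = \pi_H(\mathfrak a) = a_1 a_2$ would display the atom $b$ as a product of the two non-units $a_1,a_2$, an absurdity.

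For \ref{it:prop:min-basics(ii)}, the inclusion $\mathcal Z_H^\m(x) \subseteq \mathcal Z_H(x)$ gives one direction; for the other, the same length bookkeeping shows $\prec_H$ strictly decreases word length, so a length-minimal element $\mathfrak a$ of $\mathcal Z_H(x)$ is $\preceq_H$-minimal, and it still lies in $\mathcal Z_H(x)$ because $\pi_H$ is constant along $\preceq_H$. Item \ref{it:prop:min-basics(iib)} then follows once one notes, via Proposition \ref{preorder-facts}\ref{it:prop:preorder-facts(iv)}, that $\preceq_H$ induces a partial order on $\mathscr F^\ast(\mathscr A(H))/\mathscr C_H$, so being $\preceq_H$-minimal depends only on the $\mathscr C_H$-class; and $(\mathfrak a,\mathfrak b)\in\mathscr C_H$ also gives $\pi_H(\mathfrak b) = \pi_H(\mathfrak a) = x$, whence $\mathfrak b \in \mathcal Z_H(x)$.

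For \ref{it:prop:min-basics(iii)} the plan hinges on the elementary observation that if $K$ is a divisor-closed submonoid of $H$ and $sxt = y$ with $x,y\in K$, then $s,t\in K$ (because $s\mid_H y$ and $t\mid_H y$). From this I would deduce, in turn, that $K^\times = K\cap H^\times$; that $\mathscr A(K) = \mathscr A(H)\cap K$; that $\mathcal Z_K(x) = \mathcal Z_H(x)$ for $x\in K$ (every atom occurring in an $H$-factorization of $x$ divides $x$, hence lies in $K$); and that $\simeq_K$ agrees with $\simeq_H$ on $K$. Consequently $\preceq_K$ on $\mathscr F^\ast(\mathscr A(K))$ is literally the restriction of $\preceq_H$, and — using the $\pi_H$-invariance of $\preceq_H$ again — a word of $\mathcal Z_K(x) = \mathcal Z_H(x)$ is $\preceq_K$-minimal if and only if it is $\preceq_H$-minimal, giving $\mathcal Z_K^\m(x) = \mathcal Z_H^\m(x)$ and then $\mathsf L_K^\m(x) = \mathsf L_H^\m(x)$. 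I expect this to be the only delicate step: one must make sure that passing to a divisor-closed submonoid changes neither the set of atoms, nor the associate relation, nor the preorder $\preceq$, and the linchpin is exactly the remark that divisor-closedness drags the outer factors $s,t$ of a relation $sxt=y$ (with $x,y\in K$) back into $K$.

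For \ref{it:prop:min-basics(iv)} the goal is to show that, when $H$ is commutative and unit-cancellative, every factorization is already $\preceq_H$-minimal; equivalently, $\mathfrak b \preceq_H \mathfrak a$ with $\mathfrak a \in \mathcal Z_H(x)$ forces $(\mathfrak a,\mathfrak b)\in\mathscr C_H$. The case $\mathfrak b = \varepsilon_{\mathscr A(H)}$ gives $\pi_H(\mathfrak a) = 1_H$, which by commutativity makes the first atom of $\mathfrak a$ a unit unless $\mathfrak a$ is empty; so assume $\mathfrak a = a_1 \ast \cdots \ast a_n$, $\mathfrak b = b_1 \ast \cdots \ast b_m$, with an injection $\sigma\colon \llb 1,m\rrb \to \llb 1,n\rrb$ and $b_i \simeq_H a_{\sigma(i)}$, and suppose $m < n$. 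Writing $b_i = u_i a_{\sigma(i)}$ with $u_i \in H^\times$ and using commutativity, $x = \pi_H(\mathfrak b)$ equals $u\cdot P$ with $P := \prod_i a_{\sigma(i)}$ and $u := \prod_i u_i \in H^\times$, whereas $x = \pi_H(\mathfrak a) = P\cdot w$, where $w$ is the product of the atoms $a_j$ with $j \notin \sigma(\llb 1,m\rrb)$. Cancelling the unit $u$ in $uP = Pw$ yields $P = Pw'$ with $w' := wu^{-1}$, so unit-cancellativity forces $w' \in H^\times$, hence $w\in H^\times$; but $w$ is a non-empty product of atoms, and in a commutative monoid that would make some atom a unit, a contradiction. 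Therefore $m = n$, $\sigma$ is a bijection, and $(\mathfrak a,\mathfrak b)\in\mathscr C_H$ by the very definition of $\mathscr C_H$; the equality of the sets of lengths is then immediate.
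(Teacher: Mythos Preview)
Your proposal is correct and follows essentially the same approach as the paper. The paper is much terser: it dispatches \ref{it:prop:min-basics(i)}, \ref{it:prop:min-basics(ii)}, and \ref{it:prop:min-basics(iib)} as ``immediate from Proposition~\ref{preorder-facts}\ref{it:prop:preorder-facts(ii)}--\ref{it:prop:preorder-facts(iv)}'' (with the same parenthetical observation you make, that a length-$1$ $\mathscr A(H)$-word has an atom as its $\pi_H$-image); it obtains \ref{it:prop:min-basics(iii)} by citing \cite[Proposition~2.21(ii)]{FTr17} for $\mathcal Z_K(x)=\mathcal Z_H(x)$; and for \ref{it:prop:min-basics(iv)} it reduces to showing that no non-empty $\mathscr A(H)$-word has a proper subword with the same product, then uses unit-cancellativity and \cite[Proposition~2.30]{FTr17} exactly as you do. Your version is simply a careful unpacking of these steps --- notably, you make explicit (in \ref{it:prop:min-basics(iii)}) that divisor-closedness forces $K^\times = K\cap H^\times$, $\mathscr A(K)=\mathscr A(H)\cap K$, and $\simeq_K = \simeq_H\!\restriction_K$, and (in \ref{it:prop:min-basics(iv)}) you track the unit $u=\prod u_i$ arising from the associate relations rather than tacitly absorbing it, which the paper's ``subword'' language leaves implicit.
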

\begin{proof}
	\ref{it:prop:min-basics(i)}, \ref{it:prop:min-basics(ii)}, and \ref{it:prop:min-basics(iib)} are an immediate consequence of parts \ref{it:prop:preorder-facts(ii)}-\ref{it:prop:preorder-facts(iv)} of Proposition \ref{preorder-facts} (in particular, note that, if $\mathfrak a$  is an $\mathscr A(H)$-word of length $1$, then $\pi_H(\mathfrak a)$ is an atom of $H$, and therefore $\pi_H(\mathfrak a) \ne \pi_H(\mathfrak b)$ for every  $\mathscr A(H)$-words $\mathfrak b$ of length $\ge 2$); and \ref{it:prop:min-basics(iii)} follows at once from considering that, if $K$ is a divisor-closed submonoid of $H$ and $x \in K$, then $\mathcal Z_K(x) = \mathcal Z_H(x)$ and $\mathsf L_K(x) = \mathsf L_H(x)$, see \cite[Proposition 2.21(ii)]{FTr17}.
	
	\ref{it:prop:min-basics(iv)} Assume $H$ is commutative and unit-cancellative. It suffices to check that no non-empty $\mathscr A(H)$-word $\mathfrak a$ has a proper subword $\mathfrak b$ with $\pi_H(\mathfrak a) \simeq_H \pi_H(\mathfrak b)$. Suppose to the contrary that there exist  $a_1,\dots,a_n \in \mathscr A(H)$ with $\prod_{i \in I} a_i \simeq_H a_1\cdots a_n$ for some $I \subsetneq \llb 1, n \rrb$. Since $H$ is commutative, we can assume without loss of generality that $I = \llb 1, k \rrb$ for some $k \in \llb 0, n-1 \rrb$. Then unit-cancellativity implies $a_{k+1}\cdots a_n \in H^\times$, and we get from parts \ref{it:prop:unit-adjust(0)} and \ref{it:prop:unit-adjust(0b)} of Proposition \ref{prop:unit-adjust} that $a_{k+1},\dots,a_n \in H^\times$, which is however impossible (by definition of an atom).	
\end{proof}
To further elucidate the behavior of minimal factorizations, we give an analogue of Proposition \ref{prop:unit-adjust}\ref{it:prop:unit-adjust(ii)} showing that multiplying a non-unit by units does not change its set of minimal factorizations.
\begin{lemma}\label{lem:min-unit-adjust}
	Let $H$ be a monoid, and fix $x\in H\setminus H^\times$ and $u,v\in H^\times$.
	Then there is a length-preserving bijection $\mathcal{Z}_H^\m(x)\to\mathcal{Z}_H^\m(uxv)$, and in particular $\mathsf L_H^\m(x) = \mathsf L_H^\m(uxv)$.
\end{lemma}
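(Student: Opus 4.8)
The plan is to produce an explicit length-preserving bijection $\mathcal Z_H(x) \to \mathcal Z_H(uxv)$ that is compatible with the preorder $\preceq_H$, and then to check that it carries $\preceq_H$-minimal factorizations to $\preceq_H$-minimal factorizations. The bijection I would use simply absorbs the unit $u$ into the first letter, and the unit $v$ into the last letter, of a factorization.

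Concretely, for units $s, t \in H^\times$ I would define $\Phi_{s,t} \colon \mathscr F^\ast(\mathscr A(H)) \to \mathscr F^\ast(\mathscr A(H))$ by $\Phi_{s,t}(\varepsilon_{\mathscr A(H)}) := \varepsilon_{\mathscr A(H)}$, $\Phi_{s,t}(a_1) := sa_1t$ on one-letter words, and $\Phi_{s,t}(a_1 \ast \cdots \ast a_n) := (sa_1) \ast a_2 \ast \cdots \ast a_{n-1} \ast (a_nt)$ for $n \ge 2$; this is well defined because $sa_1$, $a_nt$, and $sa_1t$ are atoms whenever $a_1, \ldots, a_n$ are, by Proposition \ref{prop:unit-adjust}\ref{it:prop:unit-adjust(i)}. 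I would then record the routine facts that $\|\Phi_{s,t}(\mathfrak w)\|_H = \|\mathfrak w\|_H$, that $\pi_H(\Phi_{s,t}(\mathfrak w)) = s\,\pi_H(\mathfrak w)\,t$, that the $i$-th letter of $\Phi_{s,t}(\mathfrak w)$ is $\simeq_H$-associate to the $i$-th letter of $\mathfrak w$ for every $i$, and that $\Phi_{s^{-1},t^{-1}} \circ \Phi_{s,t} = \Phi_{s,t} \circ \Phi_{s^{-1},t^{-1}}$ is the identity on $\mathscr F^\ast(\mathscr A(H))$. Writing $\Phi := \Phi_{u,v}$, the formula for $\pi_H \circ \Phi_{s,t}$ then shows that $\Phi$ restricts to a length-preserving bijection $\mathcal Z_H(x) \to \mathcal Z_H(uxv)$ whose inverse is the restriction of $\Phi_{u^{-1},v^{-1}}$.

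It remains to transfer $\preceq_H$-minimality. The key step is that $\Phi_{s,t}$ is monotone for $\preceq_H$: if $\mathfrak w' \preceq_H \mathfrak w$, then $\Phi_{s,t}(\mathfrak w') \preceq_H \Phi_{s,t}(\mathfrak w)$ — the empty cases being immediate, while for non-empty $\mathfrak w', \mathfrak w$ one takes an injection $\sigma$ witnessing $\mathfrak w' \preceq_H \mathfrak w$ and notes that the \emph{same} $\sigma$ witnesses $\Phi_{s,t}(\mathfrak w') \preceq_H \Phi_{s,t}(\mathfrak w)$, since each letter of $\Phi_{s,t}(\mathfrak w')$ is $\simeq_H$-associate to the corresponding letter of $\mathfrak w'$, that letter is (via $\sigma$) associate to a letter of $\mathfrak w$, and that letter is associate to the corresponding letter of $\Phi_{s,t}(\mathfrak w)$ — using transitivity of $\simeq_H$, together with the fact that the relevant products match after left-multiplication by $s$ and right-multiplication by $t$. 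Granting this, if $\mathfrak a \in \mathcal Z_H(x)$ is minimal but $\Phi(\mathfrak a)$ is not, pick $\mathfrak d \prec_H \Phi(\mathfrak a)$; then $\|\mathfrak d\|_H < \|\Phi(\mathfrak a)\|_H = \|\mathfrak a\|_H$ by parts \ref{it:prop:preorder-facts(ii)} and \ref{it:prop:preorder-facts(iv)} of Proposition \ref{preorder-facts}, and applying $\Phi_{u^{-1},v^{-1}}$ and monotonicity yields $\Phi_{u^{-1},v^{-1}}(\mathfrak d) \preceq_H \mathfrak a$ with strictly smaller length, hence $\Phi_{u^{-1},v^{-1}}(\mathfrak d) \prec_H \mathfrak a$ (again by Proposition \ref{preorder-facts}\ref{it:prop:preorder-facts(iv)}), contradicting the minimality of $\mathfrak a$; the reverse implication is the same argument with $\Phi_{u^{-1},v^{-1}}$ and $uxv$ in place of $\Phi$ and $x$. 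Therefore $\Phi$ restricts to a length-preserving bijection $\mathcal Z_H^\m(x) \to \mathcal Z_H^\m(uxv)$, and in particular $\mathsf L_H^\m(x) = \mathsf L_H^\m(uxv)$. I expect the only delicate point to be exactly that last observation in the monotonicity step: the injection in the definition of $\preceq_H$ need not send the first position to the first or the last to the last, so letters cannot be matched naively; it works only because $\Phi_{s,t}$ alters letters within their $\simeq_H$-classes while fixing their positions, so that the original injection can be recycled unchanged.
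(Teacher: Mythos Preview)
Your proof is correct and follows essentially the same approach as the paper's: both define the bijection by absorbing $u$ into the first letter and $v$ into the last letter of a factorization (the paper writes this as $\mathfrak a \mapsto u\mathfrak a v$), and both establish well-definedness on minimal factorizations via the key observation that this map alters letters only within their $\simeq_H$-classes, so an injection witnessing $\mathfrak d \prec_H u\mathfrak a v$ can be recycled to produce a word strictly below $\mathfrak a$. The only cosmetic difference is that you abstract this step as ``monotonicity of $\Phi_{s,t}$'' and invoke it twice, while the paper carries out the single contradiction argument directly and then appeals to symmetry.
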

\begin{proof}
	Given $w, z \in H$ and a non-empty word $\mathfrak{z} = y_1 \ast \cdots \ast y_n \in \mathscr{F}(H)$ of length $n$, denote by $w\mathfrak{z}z$ the length-$n$ word $\bar{y}_1 \ast \cdots \ast \bar{y}_n \in \mathscr{F}(H)$ defined by taking $\bar{y}_1 := w y_1 z$ if $n = 1$, and $\bar{y}_1:= wy_1$, $\bar{y}_n := y_nz$, and $\bar{y}_i := y_i$ for all $i\in \llb 2,n-1\rrb$ otherwise. 
	We claim that the function 
	$$
	f: \mathcal{Z}_H^\m(x) \to \mathcal{Z}_H^\m(uxv): \mathfrak a \mapsto u\mathfrak a v
	$$
	is a well-defined length-preserving bijection. 
	In fact, it is sufficient to show that $f$ is well-defined, since this will in turn imply that the map $g: \mathcal{Z}_H^\m(uxv)\to\mathcal{Z}_H^\m(x):\mathfrak{b}\mapsto u^{-1}\mathfrak{b}v^{-1}$ is also well-defined (observe that $uxv \in H \setminus H^\times$ and $x = u^{-1} uxv v^{-1}$), and then it is easy to check that $g$ is the inverse of $f$.
	
	For the claim, let $\mathfrak a \in \mathcal{Z}_H^\m(x)$, and note that, by parts \ref{it:prop:unit-adjust(0)} and \ref{it:prop:unit-adjust(0b)} of Proposition \ref{prop:unit-adjust}, $\|\mathfrak a\|_H$ is a positive integer, so that  $\mathfrak a = a_1 \ast \cdots \ast a_n$ for some $a_1, \ldots, a_n \in \mathscr A(H)$. 
	In view of Proposition \ref{prop:unit-adjust}\ref{it:prop:unit-adjust(i)}, $u\mathfrak{a}v$ is a factorization of $uxv$, and we only need to verify that it is also $\preceq_H$-minimal. For,
suppose to the contrary that $\mathfrak b \prec_H u\mathfrak av$ for some $\mathfrak b \in \mathscr{F}(\mathscr A(H))$.
	Then $\pi_H(\mathfrak b) = \pi_H(u\mathfrak a v) = uxv$ and, by Proposition \ref{preorder-facts}\ref{it:prop:preorder-facts(iv)},
	$
	k := \|\mathfrak b\|_H \in \llb 1, n-1 \rrb
	$
	(recall that $uxv \notin H^\times$). So, $\mathfrak b = b_1 \ast \cdots \ast b_k$ for some atoms  $b_1, \ldots, b_k \in H$, and there exists an injection $\sigma: \llb 1, k \rrb \to \llb 1, n \rrb$ such that $b_i \simeq_H a_{\sigma(i)}$ for each $i \in \llb 1, k \rrb$.
	Define
	$
	\mathfrak{c} := u^{-1} \mathfrak b v^{-1}$. 
	
	By construction and Proposition \ref{prop:unit-adjust}\ref{it:prop:unit-adjust(i)}, there are  $c_1, \ldots, c_k \in \mathscr A(H)$ such that $\mathfrak c = c_1 \ast \cdots \ast c_k$; and it follows from the above that $\pi_H(\mathfrak c) = u^{-1} \pi_H(\mathfrak b) v^{-1} = x$ and $c_i \simeq_H a_{\sigma(i)}$ for every $i \in \llb 1, k \rrb$. Since $k < n$, we can thus conclude from Proposition \ref{preorder-facts}\ref{it:prop:preorder-facts(iv)} that $\mathfrak c \prec_H \mathfrak a$, contradicting the $\preceq_H$-minimality of $\mathfrak{a}$.
\end{proof}
We saw in the previous section that equimorphisms transfer factorizations between monoids (Proposition \ref{prop:equimorphism}).  
Equimorphisms have a similar compatibility with minimal factorizations, in the sense that an equimorphism also satisfies a ``minimal version'' of condition \ref{def:equimorphism(E3)} from Definition \ref{def:equimorphism}.
\begin{proposition}\label{prop:min-equi}
	Let $H$ and $K$ be monoids and $\varphi: H\to K$ an equimorphism. The following hold: 
	\begin{enumerate}[label={\rm (\roman{*})}]
		\item\label{it:prop:min-equi(i)} If $x\in H \setminus H^\times$ and $\mathfrak{b}\in \mathcal{Z}_K^\m(\varphi(x))$, then there is $\mathfrak{a}\in \mathcal{Z}_H^\m(x)$ with $\varphi^*(\mathfrak{a})\in \llb \mathfrak{b} \rrb_{\mathscr{C}_K}$.
		\item\label{it:prop:min-equi(ii)} $\mathsf{L}_K^\m(\varphi(x)) \subseteq \mathsf{L}_H^\m(x)$ for every $x\in H\setminus H^\times$.
		\item\label{it:prop:min-equi(iii)} If $\varphi$ is essentially surjective then, for all $y\in K\setminus K^\times$, there is $x\in H\setminus H^\times$ with $\mathsf{L}_K^\m(y) \subseteq \mathsf{L}_H^\m(x)$. 
	\end{enumerate}
\end{proposition}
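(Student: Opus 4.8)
The plan is to deduce everything from part~\ref{it:prop:min-equi(i)}, whose engine is the following monotonicity property of $\varphi^\ast$: if $\mathfrak c \preceq_H \mathfrak d$ for \emph{non-empty} $\mathscr A(H)$-words $\mathfrak c$ and $\mathfrak d$, then $\varphi^\ast(\mathfrak c) \preceq_K \varphi^\ast(\mathfrak d)$. This is a direct unwinding of the definitions: by \ref{def:equimorphism(E2)} the map $\varphi^\ast$ carries a non-empty $\mathscr A(H)$-word to a non-empty $\mathscr A(K)$-word of the same word length; any monoid homomorphism sends units to units, and hence $\simeq_H$-associates to $\simeq_K$-associates; and $\pi_K \circ \varphi^\ast = \varphi \circ \pi_H$. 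Thus an injection $\sigma$ witnessing $\mathfrak c \preceq_H \mathfrak d$, together with the accompanying equality of $\pi_H$-products, is transported by $\varphi$ to a witness for $\varphi^\ast(\mathfrak c) \preceq_K \varphi^\ast(\mathfrak d)$.

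For part~\ref{it:prop:min-equi(i)}, fix $x \in H\setminus H^\times$ and $\mathfrak b \in \mathcal Z_K^\m(\varphi(x))$. Since $\varphi(x)\notin K^\times$ by \ref{def:equimorphism(E1)}, every element of $\mathcal Z_K(\varphi(x))$ is a non-empty word; in particular so is $\mathfrak b$, and hence \ref{def:equimorphism(E3)} supplies some $\mathfrak a_0 \in \mathcal Z_H(x)$ with $\varphi^\ast(\mathfrak a_0) \in \llb \mathfrak b \rrb_{\mathscr C_K}$. I would then choose $\mathfrak a \in \mathcal Z_H(x)$ of least word length subject to $\mathfrak a \preceq_H \mathfrak a_0$; a short argument using parts~\ref{it:prop:preorder-facts(0)}, \ref{it:prop:preorder-facts(ii)}, and~\ref{it:prop:preorder-facts(iv)} of Proposition~\ref{preorder-facts} shows that any such $\mathfrak a$ is automatically $\preceq_H$-minimal, hence $\mathfrak a \in \mathcal Z_H^\m(x)$. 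By the monotonicity property above and transitivity of $\preceq_K$, we obtain $\varphi^\ast(\mathfrak a) \preceq_K \varphi^\ast(\mathfrak a_0) \preceq_K \mathfrak b$, the last relation holding by Proposition~\ref{preorder-facts}\ref{it:prop:preorder-facts(iv)} because $\varphi^\ast(\mathfrak a_0)$ is $\mathscr C_K$-congruent to $\mathfrak b$. Now the $\preceq_K$-minimality of $\mathfrak b$ forbids $\varphi^\ast(\mathfrak a) \prec_K \mathfrak b$, so that $\mathfrak b \preceq_K \varphi^\ast(\mathfrak a)$ holds as well; and Proposition~\ref{preorder-facts}\ref{it:prop:preorder-facts(iv)} then yields $(\varphi^\ast(\mathfrak a), \mathfrak b) \in \mathscr C_K$, which is exactly the assertion.

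Parts~\ref{it:prop:min-equi(ii)} and~\ref{it:prop:min-equi(iii)} follow quickly. For~\ref{it:prop:min-equi(ii)}, given $\ell \in \mathsf L_K^\m(\varphi(x))$, pick $\mathfrak b \in \mathcal Z_K^\m(\varphi(x))$ with $\|\mathfrak b\|_K = \ell$, apply~\ref{it:prop:min-equi(i)} to obtain $\mathfrak a \in \mathcal Z_H^\m(x)$ with $\varphi^\ast(\mathfrak a) \in \llb \mathfrak b \rrb_{\mathscr C_K}$, and note that $\|\mathfrak a\|_H = \|\varphi^\ast(\mathfrak a)\|_K = \|\mathfrak b\|_K = \ell$ (using \ref{def:equimorphism(E2)} for the first equality and the fact that $\mathscr C_K$-congruent words have equal word length for the second). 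For~\ref{it:prop:min-equi(iii)}, given $y \in K\setminus K^\times$, essential surjectivity provides $x \in H$ and $u, v \in K^\times$ with $y = u\,\varphi(x)\,v$; then $\varphi(x)\notin K^\times$ (otherwise $y\in K^\times$), hence $x\notin H^\times$, and Lemma~\ref{lem:min-unit-adjust} applied inside $K$ gives $\mathsf L_K^\m(y) = \mathsf L_K^\m(\varphi(x))$, a set contained in $\mathsf L_H^\m(x)$ by~\ref{it:prop:min-equi(ii)}.

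I expect the only delicate point to be the step in part~\ref{it:prop:min-equi(i)} where one passes from the arbitrary lift $\mathfrak a_0$ produced by \ref{def:equimorphism(E3)} to a genuinely $\preceq_H$-minimal lift $\mathfrak a$: shrinking $\mathfrak a_0$ within $\mathcal Z_H(x)$ could in principle make its image strictly smaller than $\mathfrak b$, and it is precisely the $\preceq_K$-minimality of $\mathfrak b$ that obstructs this. Everything else is a mechanical manipulation of Definitions~\ref{def:equimorphism} and~\ref{def:preorder} together with Proposition~\ref{preorder-facts}.
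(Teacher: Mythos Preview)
Your proof is correct and rests on the same mechanism as the paper's: the monotonicity of $\varphi^\ast$ with respect to the preorders $\preceq_H$ and $\preceq_K$, combined with the $\preceq_K$-minimality of $\mathfrak b$. The only packaging difference is in part~\ref{it:prop:min-equi(i)}: the paper argues directly that the lift $\mathfrak a$ furnished by \ref{def:equimorphism(E3)} is already $\preceq_H$-minimal (for otherwise any $\mathfrak c \prec_H \mathfrak a$ would yield $\varphi^\ast(\mathfrak c) \prec_K \mathfrak b$, a contradiction), whereas you first shrink $\mathfrak a_0$ to a minimal $\mathfrak a$ and then use minimality of $\mathfrak b$ to rule out $\varphi^\ast(\mathfrak a) \prec_K \mathfrak b$. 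These are contrapositive rephrasings of one another; in particular, your shrinking step is harmless but not needed, since the paper's argument shows \emph{any} \ref{def:equimorphism(E3)}-lift of a minimal $\mathfrak b$ is automatically minimal. Parts~\ref{it:prop:min-equi(ii)} and~\ref{it:prop:min-equi(iii)} are handled identically in both proofs.
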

\begin{proof}
	\ref{it:prop:min-equi(i)}
	Pick $x\in H \setminus H^\times$, and let $\mathfrak b \in \mathcal{Z}_K^\m(\varphi(x))$. Then $\mathfrak b \ne \varepsilon_{\mathscr A(K)}$, otherwise $\varphi(x) = \pi_K(\mathfrak b) = 1_K$ and, by \ref{def:equimorphism(E1)}, $x \in \varphi^{-1}(\varphi(x)) = \varphi^{-1}(1_K) \subseteq H^\times$ (a contradiction). Consequently, \ref{def:equimorphism(E3)} yields the existence of a factorization $\mathfrak{a}\in \mathcal{Z}_H(x)$ with $\varphi^*(\mathfrak{a}) \in \llb \mathfrak{b} \rrb_{\mathscr{C}_K}$, and it only remains to show that $\mathfrak a$ is $\preceq_H$-minimal. 
	
	{Note that} $n := \|\mathfrak a\|_H = \|\varphi^\ast(\mathfrak a)\|_K = \|\mathfrak b\|_K \ge 1$, and write $\mathfrak a = a_1 \ast \cdots \ast a_n$ and $\mathfrak b = b_1 \ast \cdots \ast b_n$, with $a_1, \ldots, a_n \in \mathscr A(H)$ and $b_1, \ldots, b_n \in \mathscr A(K)$. Then suppose to the contrary that $\mathfrak a$ is not $\preceq_H$-minimal, i.e., there exist a (necessarily non-empty) $\mathscr A(H)$-word $\mathfrak{c} = c_1 \ast \cdots \ast c_m$ and an injection $\sigma: \llb 1, m \rrb \to \llb 1, n \rrb$ such that $\pi_H(\mathfrak c) = \pi_H(\mathfrak a) = x$ and $c_i \simeq_H a_{\sigma(i)}$ for every $i \in \llb 1, m \rrb$. Then
	\[
	\pi_K(\varphi^\ast(\mathfrak c)) = \varphi(c_1) \cdots \varphi(c_m) = \varphi(x)
	\quad\text{and}\quad
	\varphi(c_1) \simeq_K \varphi(a_{\sigma(1)}), \ldots, \varphi(c_m) \simeq_K \varphi(a_{\sigma(m)})
	\]
	(recall that monoid hom\-o\-mor\-phisms map units to units; so, if $u \simeq_H v$, then $\varphi(u) \simeq_K \varphi(v)$); and together with Proposition \ref{prop:min-basics}\ref{it:prop:min-basics(iib)}, this proves that 
	$\varphi^\ast(\mathfrak c) \prec_K \mathfrak b$, contradicting the $\preceq_K$-minimality of $\mathfrak b$.
	
	\ref{it:prop:min-equi(ii)} Fix $x \in H \setminus H^\times$, and suppose $\mathsf{L}_K^\m(\varphi(x)) \ne \emptyset$ (otherwise there is nothing to prove). Accordingly, let $k \in \mathsf{L}_K^\m(\varphi(x))$ and $\mathfrak b \in \mathcal Z_K^\m(\varphi(x))$ such that $k = \|\mathfrak b\|_K$. It is sufficient to check that $k \in \mathsf L_H^\m(x)$, and this is straightforward: Indeed, we have by \ref{it:prop:min-equi(i)} that $\varphi^\ast(\mathfrak a)$ is $\mathscr C_K$-congruent to $\mathfrak b$ for some $\mathfrak a \in \mathcal Z_H^\m(x)$, which implies in particular that $k = \|\varphi^\ast(\mathfrak a)\|_K = \|\mathfrak a\|_H \in \mathsf L_H^\m(x)$.
	
	\ref{it:prop:min-equi(iii)} Assume $\varphi$ is essentially surjective, and let $y \in K \setminus K^\times$. Then $y = u \varphi(x) v$ for some $u,v\in K^\times$ and $x\in H$, and neither $x$ is a unit of $H$ nor $\varphi(x)$ is a unit of $K$ (because $\varphi(H^\times) \subseteq K^\times$ and $y \notin K^\times$). Accordingly, we have by Lemma \ref{lem:min-unit-adjust} and part \ref{it:prop:min-equi(ii)} that $\mathsf{L}_K^\m(y) = \mathsf{L}_K^\m(\varphi(x)) \subseteq \mathsf{L}_H^\m(x)$.
\end{proof}

\subsection{Minimal factorizations in power monoids.}
\label{subsec:min-factorization-in-PMs}
Let $H$ be a monoid. Similarly as in \S{ }\ref{sec:atomicity}, we would like to simplify the study of minimal factorizations in $\PPc_\funt(H)$ as much as possible by passing to consideration of the reduced monoid $\PPc_\fun(H)$. For, it is of primary importance to make clear the nature of the relationship between minimal factorizations in $\PPc_\funt(H)$ and those in $\PPc_\fun(H)$.
We shall see that this is possible under \emph{some} circumstances. 
\begin{proposition}\label{prop:comm-pm}
	Let $H$ be a commutative monoid, and let $X\in \PPc_{\fin,1}(H)$. The following hold:
	\begin{enumerate}[label = {\rm (\roman{*})}]
		\item\label{it:prop:comm-pm(i)} $\ZZc_{\PPc_{\fin,1}(H)}^\m(X) \subseteq \ZZc_{\PPc_{\funt}(H)}^\m(X)$.
		\item\label{it:prop:comm-pm(ii)} $\mathsf{L}_{\PPc_{\fin,1}(H)}^\m(X) = \mathsf{L}_{\PPc_\funt(H)}^\m(X)$.
		\item\label{it:prop:comm-pm(iii)} $\LLc^\m(\PPc_{\fin,1}(H)) = \LLc^\m(\PPc_\funt(H))$.
	\end{enumerate}
\end{proposition}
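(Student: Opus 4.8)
The plan is to use the natural embedding $\jmath\colon\PPc_\fun(H)\hookrightarrow\PPc_\funt(H)$ — which is an essentially surjective equimorphism by Proposition~\ref{prop:funt&fun-have-the-same-system-of-lengths}\ref{it:prop:funt&fun-have-the-same-system-of-lengths(i)}, since a commutative monoid is Dedekind-finite — to dispatch the ``soft'' halves of the three assertions, and to prove the one genuinely new containment, part~\ref{it:prop:comm-pm(i)}, by a cardinality argument. Throughout I would use that products of finite subsets of $H$ agree whether computed in $\PPc_\fun(H)$ or in $\PPc_\funt(H)$, that every unit of $\PPc_\funt(H)$ is a singleton $\{t\}$ with $t\in H^\times$ by Proposition~\ref{prop:pm-arith}\ref{it:prop:pm-arith(iv)}, and that $|\{t\}Z|=|Z|$ for every $Z\subseteq H$. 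The case $X=\{1_H\}$ is trivial in all three parts (every minimal-length set in sight is $\{0\}$), so I assume $X\neq\{1_H\}$.

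For part~\ref{it:prop:comm-pm(i)}, fix $\mathfrak a=A_1\ast\cdots\ast A_n\in\ZZc_{\PPc_\fun(H)}^\m(X)$ with $n\ge1$. Since $\jmath$ preserves atoms and $A_1\cdots A_n=X$ in $\PPc_\funt(H)$ as well, $\mathfrak a\in\ZZc_{\PPc_\funt(H)}(X)$, so the point is to show that $\mathfrak a$ is $\preceq_{\PPc_\funt(H)}$-minimal. Suppose not: there would be a word $\mathfrak c=C_1\ast\cdots\ast C_m$ with $\mathfrak c\prec_{\PPc_\funt(H)}\mathfrak a$; then $C_1\cdots C_m=X$, there is an injection $\tau\colon\llb 1,m\rrb\to\llb 1,n\rrb$ with $C_i\simeq_{\PPc_\funt(H)}A_{\tau(i)}$ for every $i$, and $m<n$ by parts~\ref{it:prop:preorder-facts(ii)} and~\ref{it:prop:preorder-facts(iv)} of Proposition~\ref{preorder-facts}. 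Since the units of $\PPc_\funt(H)$ are singletons and $\PPc_\funt(H)$ is commutative, I can write $C_i=\{t_i\}A_{\tau(i)}$ with $t_i\in H^\times$ and, multiplying, $X=\{t\}\,\bigl(A_{\tau(1)}\cdots A_{\tau(m)}\bigr)$ with $t:=t_1\cdots t_m\in H^\times$. Put $Y:=A_{\tau(1)}\cdots A_{\tau(m)}\in\PPc_\fun(H)$: from $\{t\}Y=X$ I obtain $|Y|=|X|$, while from $X=\bigl(\prod_{j\notin\tau(\llb 1,m\rrb)}A_j\bigr)\,Y$ with a first factor containing $1_H$ I obtain $Y\subseteq X$ by Proposition~\ref{prop:pm-arith}\ref{it:prop:pm-arith(i)}; hence $Y=X$. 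Thus $A_{\tau(1)}\ast\cdots\ast A_{\tau(m)}$ is a length-$m$ element of $\ZZc_{\PPc_\fun(H)}(X)$ that embeds into $\mathfrak a$ via $\tau$ (reflexivity gives $A_{\tau(i)}\simeq_{\PPc_\fun(H)}A_{\tau(i)}$), so, since $m<n$, Proposition~\ref{preorder-facts}\ref{it:prop:preorder-facts(iv)} places it strictly below $\mathfrak a$ for $\preceq_{\PPc_\fun(H)}$, contradicting the minimality of $\mathfrak a$.

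Part~\ref{it:prop:comm-pm(ii)} then follows quickly: ``$\subseteq$'' is immediate from~\ref{it:prop:comm-pm(i)}, and ``$\supseteq$'' is Proposition~\ref{prop:min-equi}\ref{it:prop:min-equi(ii)} applied to $\jmath$. For part~\ref{it:prop:comm-pm(iii)}, the inclusion $\LLc^\m(\PPc_\fun(H))\subseteq\LLc^\m(\PPc_\funt(H))$ follows from~\ref{it:prop:comm-pm(ii)} since $\PPc_\fun(H)\subseteq\PPc_\funt(H)$; conversely, for a non-unit $W\in\PPc_\funt(H)$ I would pick $w\in W\cap H^\times$ and set $X:=\{w^{-1}\}W\in\PPc_\fun(H)$, a non-unit with $W=\{w\}X$, so that Lemma~\ref{lem:min-unit-adjust} together with~\ref{it:prop:comm-pm(ii)} gives $\mathsf{L}_{\PPc_\funt(H)}^\m(W)=\mathsf{L}_{\PPc_\funt(H)}^\m(X)=\mathsf{L}_{\PPc_\fun(H)}^\m(X)\in\LLc^\m(\PPc_\fun(H))$; the units of $\PPc_\funt(H)$ contribute only $\{0\}$, matched by $\mathsf{L}_{\PPc_\fun(H)}^\m(\{1_H\})$, and $\emptyset$, which is discarded from the system exactly as in the definition of $\LLc(H)$.

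The step I expect to be the main obstacle is the minimality argument in part~\ref{it:prop:comm-pm(i)}. It does \emph{not} come for free from $\jmath$ being an equimorphism, because $\simeq_{\PPc_\funt(H)}$ is strictly coarser on $\PPc_\fun(H)$ than $\simeq_{\PPc_\fun(H)}$ — writing $\ZZb/5\ZZb$ additively, $\{\overline{0},\overline{2}\}$ and $\{\overline{0},\overline{3}\}$ are associate in $\PPc_\funt(\ZZb/5\ZZb)$ yet distinct — so a word lying strictly below $\mathfrak a$ for $\preceq_{\PPc_\funt(H)}$ need not do so for $\preceq_{\PPc_\fun(H)}$. The argument is rescued, and here commutativity is used essentially, by the fact that all the unit twists collapse into a single singleton $\{t\}$: then $\{t\}Y=X$ forces $|Y|=|X|$, and combined with $Y\subseteq X$ (which holds because every atom of $\PPc_\fun(H)$ contains $1_H$) this pins $Y$ down to $X$.
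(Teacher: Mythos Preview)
Your proof is correct and follows essentially the same line as the paper's: for \ref{it:prop:comm-pm(i)} you run the identical cardinality argument (collapse the unit translates via commutativity into a single $\{t\}$, deduce $|Y|=|X|$ and $Y\subseteq X$, hence $Y=X$, yielding a shorter $\PPc_\fun(H)$-factorization), and parts \ref{it:prop:comm-pm(ii)} and \ref{it:prop:comm-pm(iii)} are dispatched exactly as in the paper via Propositions~\ref{prop:funt&fun-have-the-same-system-of-lengths}\ref{it:prop:funt&fun-have-the-same-system-of-lengths(i)} and~\ref{prop:min-equi}\ref{it:prop:min-equi(ii)} together with Lemma~\ref{lem:min-unit-adjust}. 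One small remark: your closing comment that $\emptyset$ is ``discarded from the system exactly as in the definition of $\LLc(H)$'' is not quite accurate for $\LLc^\m$ as defined in the paper (no $\setminus\{\emptyset\}$ appears there), but the paper's own proof of \ref{it:prop:comm-pm(iii)} glosses over the unit case in the same way, so this is not a divergence from the intended argument.
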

\begin{proof}
	\ref{it:prop:comm-pm(i)} Let $\mathfrak{a}$ be a minimal factorization of $X$ relative to $\PPc_{\fin,1}(H)$. In light of Proposition \ref{prop:min-basics}\ref{it:prop:min-basics(i)}, $\mathfrak{a}$ is a non-empty $\mathscr A(\mathcal P_{\fin,1}(H))$-word, i.e., $\mathfrak a = A_1 \ast \cdots \ast A_n$ for some atoms $A_1, \ldots, A_n \in \mathcal P_{\fin,1}(H)$. 
	
	Assume for the sake of contradiction that $\mathfrak a$ is not a minimal factorization relative to $\PPc_{\fin,\times}(H)$. Then there exist a non-empty $\mathscr A(\mathcal P_{\fin,\times}(H))$-word $\mathfrak b = B_1 * \cdots * B_m$ and an injection $\sigma: \llb 1, m \rrb \to \llb 1, n \rrb$ with 
	\[
	X = A_1 \cdots A_n = B_1 \cdots B_m
	\quad\text{and}\quad
	B_1 \simeq_{\mathcal P_{\fin,\times}(H)} A_{\sigma(1)}, \ldots, B_m \simeq_{\mathcal P_{\fin,\times}(H)} A_{\sigma(m)},
	\]
	and on account of Proposition \ref{preorder-facts}\ref{it:prop:preorder-facts(iv)} we must have $1 \le m < n$.
	Since $H$ is a commutative monoid, this means in particular that, for each $i \in \llb 1, m \rrb$, there is $u_i \in H^\times$ such that $B_i = u_i A_{\sigma(i)}$. Thus we have
	\[
	A_1 \cdots A_n = B_1 \cdots B_m = (u_1 A_{\sigma(1)}) \cdots (u_m A_{\sigma(m)}) = u \cdot A_{\sigma(1)} \cdots A_{\sigma(m)},
	\]
	where $u := u_1 \cdots u_m \in H^\times$. In view of Proposition \ref{prop:pm-arith}\ref{it:prop:pm-arith(ii)}, it follows that
	\[
	\left| A_1 \cdots A_n \right| = \left|A_{\sigma(1)} \cdots A_{\sigma(m)} \right|,
	\]
	which is only possible if 
	\[
	X = A_1 \cdots A_n = A_{\sigma(1)} \cdots A_{\sigma(m)}, 
	\]
	because $1_H \in A_i$ for every $i \in \llb 1, n \rrb$, and hence $A_{\sigma(1)} \cdots A_{\sigma(m)} \subseteq A_1 \cdots A_n$ (note that here we use again that $H$ is commutative). So, letting $\mathfrak a^\prime$ be the $\mathscr A(\mathcal P_{\fin,1}(H))$-word $ A_{\sigma(1)} \ast \cdots \ast A_{\sigma(m)}$ and recalling from the above that $m \le n-1$, we see by Proposition \ref{preorder-facts}\ref{it:prop:preorder-facts(iv)} that $\mathfrak a^\prime \prec_{\mathcal P_{\fin,1}(H)} \mathfrak a$, which contradicts the hypothesis that $\mathfrak a$ is a minimal factorization of $X$ in $\mathcal P_{\fin,1}(H)$.
	
	\ref{it:prop:comm-pm(ii)} It is an immediate consequence of part \ref{it:prop:comm-pm(i)} and Propositions \ref{prop:funt&fun-have-the-same-system-of-lengths}\ref{it:prop:funt&fun-have-the-same-system-of-lengths(i)} and \ref{prop:min-equi}\ref{it:prop:min-equi(ii)}, when considering that every commutative monoid is Dedekind-finite.
	
	\ref{it:prop:comm-pm(iii)} We already know from part \ref{it:prop:comm-pm(ii)} that $\mathscr L^\m(\mathcal P_{\fin,1}(H)) \subseteq \mathscr L^\m(\mathcal P_\funt(H))$. For the opposite inclusion, fix $X \in \PPc_\funt(H)$. We claim that there exists $Y \in \mathcal P_{\fin,1}(H)$ with 
	$\mathsf L_{\PPc_\funt(H)}^\m(X) = \mathsf L_{\PPc_{\fin,1}(H)}^\m(Y)$.
Indeed, pick $x \in X \cap H^\times$. Then $x^{-1} X\in \PPc_{\fin,1}(H)$, and we derive from Lemma \ref{lem:min-unit-adjust} and part \ref{it:prop:comm-pm(ii)} that 
	\[
	\mathsf{L}_{\PPc_\funt(H)}^\m(X) = \mathsf{L}_{\PPc_\funt(H)}^\m(x^{-1}X) = \mathsf{L}_{\PPc_{\fin,1}(H)}^\m(x^{-1} X),
	\]
	which proves our claim and suffices to finish the proof (since $X$ was arbitrary).
\end{proof}
We will now discuss an instance in which equality in Proposition \ref{prop:comm-pm}\ref{it:prop:comm-pm(ii)} does not necessarily hold true in the absence of commutativity, and the best we can hope for is the containment relation implied by Proposition \ref{prop:min-equi}\ref{it:prop:min-equi(ii)} when $\varphi$ is the natural embedding of  Proposition \ref{prop:funt&fun-have-the-same-system-of-lengths}\ref{it:prop:funt&fun-have-the-same-system-of-lengths(i)}.
\begin{example}\label{exa:strict-inclusion}
	Let $n$ be a (positive) multiple of $105$, and $p$ a (positive) prime dividing $n^2 + n + 1$; note that $p \ge 11$ and $3 \le n \bmod p \le p-3$ (where $n \bmod p$ is the remainder of the Euclidean division of $n$ by $p$). Following \cite[p. 27]{Gor1980}, we take $H$ to be the metacyclic group generated by the $2$-element set $\{r, s\}$ subject to $\ord_H(r) = p$, $\ord_H(s) = 3$, and $s^{-1} r s = r^n$. 
	Then $H$ is a non-abelian group of odd order $3p$, and by Theorem \ref{th:atomicity} and Propositions \ref{prop:equimorphism}\ref{it:prop:equimorphism(ii)} and \ref{prop:funt&fun-have-the-same-system-of-lengths}\ref{it:prop:funt&fun-have-the-same-system-of-lengths(i)}, $\mathcal P_{\fin,1}(H)$ and $\mathcal P_{\fin,\times}(H)$ are atomic monoids. 
	
We claim that $X := \langle r \rangle_H$ has minimal factorizations of length $p-1$ in $\mathcal P_{\fin,1}(H)$ but not in $\mathcal P_{\fin,\times}(H)$.
	{Pick $g \in X \setminus \{1_H\}$.} Clearly $\ord_H(g) = p$, and thus we get from Lemma \ref{lem:2-elt-atoms}\ref{it:lem:2-elt-atoms(i)} that $\{1_H, g\}$ is an atom of $\mathcal P_{\fin,1}(H)$. Then it is immediate to see that $\mathfrak a_g := \{1_H, g\}^{\ast (p-1)}$ is a minimal factorization of $X$ in $\mathcal P_{\fin,1}(H)$; most notably, $\mathfrak a_g$ is minimal since otherwise there should exist an exponent $k \in \llb 1, p-2 \rrb$ such that $g^{p-1} = g^k$, contradicting that $\ord_H(g) = p$. Yet, $\mathfrak a_g$ is not a minimal factorization of $X$ in $\mathcal P_{\fin,\times}(H)$.
	Indeed, Proposition \ref{prop:funt&fun-have-the-same-system-of-lengths}\ref{it:prop:funt&fun-have-the-same-system-of-lengths(ii)} and Lemma \ref{lem:2-elt-atoms}\ref{it:lem:2-elt-atoms(i)} guarantee that $\{1_H, g\}$ and $\{1_H, g^n\}$ are associate atoms of $\mathcal P_{\fin,\times}(H)$, because $s^{-1} g^n s = g$ and, hence, $s^{-1} \{1, g\} s = \{1_H, g^n\}$. So, in view of Proposition \ref{preorder-facts}\ref{it:prop:preorder-facts(iv)}, it is straightforward that 
	\[
	\{1_H, g\}^{\ast (p-2)} \ast \{1_H, g^n\} \prec_{\mathcal P_{\fin,\times}(H)} \mathfrak a_g,
	\]
	In particular, note here that we have used that $3 \le n \bmod p \le p-3$ to obtain
	\[
	\{1_H, g, \ldots, g^{p-2}\} \cup \{g^n, g^{n+1}, \ldots, g^{n+p-2}\} = \{1_H, g, \ldots, g^{p-1}\} = X.
	\]
	Given that, suppose for a contradiction that $X$ has a minimal factorization $\mathfrak c$ of length $p-1$ in $\mathcal P_{\fin,\times}(H)$. Then by Propositions \ref{prop:funt&fun-have-the-same-system-of-lengths}\ref{it:prop:funt&fun-have-the-same-system-of-lengths(i)} and \ref{prop:min-equi}\ref{it:prop:min-equi(i)}, $\mathfrak c$ is $\mathscr C_{\mathcal P_{\fin,\times}(H)}$-congruent to a $\preceq_{\mathcal P_{\fin,1}(H)}$-minimal fac\-tor\-ization $\mathfrak a = A_1 \ast \cdots \ast A_{p-1}$ of $X$ of length $p-1$; and we aim to show that $\mathfrak a$ is $\mathscr C_{\mathcal P_{\fin,1}(H)}$-congruent to $\mathfrak a_g$ for some $g \in X \setminus \{1_H\}$, which is however impossible as it would mean that $\mathfrak a_g$ is a minimal factorization of $X$ in $\mathcal P_{\fin,\times}(H)$, in contradiction to what established in the above.
	
	Indeed, let $B_i$ be, for $i \in \llb 1, p-1 \rrb$, the image of $\{k \in \llb 0, p-1 \rrb: r^k \in A_i\} \subseteq \bf Z$ under the canonical map $\mathbf Z \to \mathbf Z/p\mathbf Z$. Then $\mathfrak a$ is a minimal factorization of $X$ in $\mathcal P_{\fin,1}(H)$ only if $\mathfrak b := B_1 \ast \cdots \ast B_{p-1}$ is a minimal factorization of $\mathbf Z/p\mathbf Z$ in the reduced power monoid of $(\mathbf Z/p\mathbf Z, +)$, herein denoted by $\mathcal P_{\fin,0}(\mathbf Z/p\mathbf Z)$.
	
	We want to show that $\mathfrak b$ is $\preceq_{\mathcal P_{\fin,0}(H)}$-minimal only if there is a non-zero $x \in \mathbf Z/p\mathbf Z$ such that $B_i = \{\overline{0}, x\}$ or $B_i = \{\overline{0}, -x\}$, or equivalently $A_i = \{1_H, r^{\hat{x}}\}$ or $A_i = \{1_H, r^{-\hat{x}}\}$, for every $i \in \llb 1, p-1 \rrb$ (for notation, see \S{ }\ref{subsec:generalities}). By the preceding arguments, this will suffice to conclude that $p-1 \notin \mathsf L^\m_{\mathcal P_\funt(H)}(X)$, because it implies at once that $\mathfrak a$ is $\mathcal C_{\mathcal P_\fun(H)}$-congruent to $\mathfrak a_g$ with $g := r^{\hat{x}} \in X \setminus \{1_H\}$.
	
	To begin, let $K$ be a subset of $\llb 1, p-1 \rrb$, and define $\mathcal{S}_K := \sum_{k \in K} B_k$ and $s_K := \{k \in K: |B_k| \ge 3\}$. Then we have by the Cauchy-Davenport inequality (see, e.g., \cite[Theorem 6.2]{Gry13}) that
	\begin{equation}\label{equ:cauchy-davenport-application}
	\mathcal{S}_K = \mathbf Z/p\mathbf Z 
	\quad\text{or}\quad
	|\mathcal{S}_K| \ge 1 + {\sum}_{k \in K} \bigl(|B_k| - 1\bigr) \ge 1 + |K| + s_K.
	\end{equation}
	Now, let $I$ and $J$ be disjoint subsets of $\llb 1, p-1 \rrb$ with $|I \cup J| = |I| + |J| = p-2$. We claim $s_I = s_J = 0$. Indeed, it is clear that $\mathcal{S}_{I \cup J} \ne \mathbf Z/p\mathbf Z$, otherwise $\mathfrak b$ would not be a minimal factorization in $\mathcal P_{\fin,0}(\mathbf Z/p\mathbf Z)$. So, another application of the Cauchy-Davenport inequality, combined with \eqref{equ:cauchy-davenport-application}, yields
	\begin{equation}\label{equ:another-cauchy-davenport-application}
	|S_{I \cup J}| = |S_I + S_J| \ge |S_I| + |S_J| - 1 \ge 1 + |I| + |J| + s_I + s_J = p-1 + s_I + s_J.
	\end{equation}
	This suffices to prove that $|S_I + S_J| = p-1$ and $s_I = s_J = 0$, or else $S_{I \cup J} = \mathbf Z/p\mathbf Z$ (a contradiction). 
	
	It follows $|B_1| = \cdots = |B_{p-1}| = 2$. So, taking $I$ in \eqref{equ:another-cauchy-davenport-application} to range over all $1$-element subsets of $\llb 1, p-1 \rrb$ and observing that, consequently, $|S_J| \ge p-1-|S_I| = p-3 \ge 8 > |S_I|$, we infer from Vosper's theorem (see, e.g., \cite[Theorem 8.1]{Gry13}) that there exists a non-zero $x \in \mathbf Z/p\mathbf Z$ such that, for every $i \in \llb 1, p-1 \rrb$, $B_i$ is an arithmetic progression of $\mathbf Z/p\mathbf Z$ with difference $x$, i.e., $B_i = \{\overline{0}, x\}$ or $B_i = \{\overline{0}, -x\}$ (as wished).
\end{example}

We proceed with an analogue of Theorem \ref{thm:BF-torsion}\ref{it:thm:BF-torsion(i)} and then prove the main results of the section.

\begin{proposition}\label{prop:bounded-minimal-fzn}
	Let $H$ be a monoid and $X \in \PPc_\funt(H)$. The following hold:
	\begin{enumerate}[label={\rm (\roman{*})}]
		\item\label{it:prop:bounded-minimal-fzn(i)} If $X\in \PPc_\fun(H)$, then a minimal factorization of $X$ in $\PPc_\fun(H)$ has length $\le |X|-1$.
		\item\label{it:prop:bounded-minimal-fzn(ii)} If $H$ is Dedekind-finite, then a minimal factorization of $X$ in $\PPc_\funt(H)$ has length $\le |X|-1$.
	\end{enumerate}
\end{proposition}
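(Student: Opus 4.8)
The plan is to establish part~\ref{it:prop:bounded-minimal-fzn(i)} by a telescoping argument on partial products, and then to deduce part~\ref{it:prop:bounded-minimal-fzn(ii)} from it via the equimorphism $\jmath: \PPc_\fun(H)\hookrightarrow\PPc_\funt(H)$ and the results of this section on the behaviour of minimal lengths under units and equimorphisms.

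For~\ref{it:prop:bounded-minimal-fzn(i)}, I would fix a minimal factorization $\mathfrak a$ of $X$ in $\PPc_\fun(H)$. If $\mathfrak a$ is the empty word, then $X=\{1_H\}$ and the bound is clear; otherwise write $\mathfrak a = A_1\ast\cdots\ast A_n$ with $n\ge 1$ and $A_1,\dots,A_n\in\mathscr A(\PPc_\fun(H))$. Put $P_0:=\{1_H\}$ and $P_k:=A_1\cdots A_k$ for $k\in\llb 1,n\rrb$, so that $P_n=X$. Since $1_H\in A_k$, we have $P_{k-1}=P_{k-1}\{1_H\}\subseteq P_{k-1}A_k=P_k$, whence $\{1_H\}=P_0\subseteq P_1\subseteq\cdots\subseteq P_n=X$. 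The key point is that each of these inclusions is strict. Indeed, if $P_{k-1}=P_k$ for some $k$, then multiplying on the right by $A_{k+1}\cdots A_n$ gives $X=A_1\cdots A_{k-1}A_{k+1}\cdots A_n$, so the proper subword $\mathfrak a':=A_1\ast\cdots\ast A_{k-1}\ast A_{k+1}\ast\cdots\ast A_n$ of $\mathfrak a$ again lies in $\mathcal Z_{\PPc_\fun(H)}(X)$; matching each letter of $\mathfrak a'$ with the letter of $\mathfrak a$ from which it descends shows $\mathfrak a'\preceq_{\PPc_\fun(H)}\mathfrak a$, while $\|\mathfrak a'\|=n-1<n=\|\mathfrak a\|$ together with Proposition~\ref{preorder-facts}\ref{it:prop:preorder-facts(ii)} rules out $\mathfrak a\preceq_{\PPc_\fun(H)}\mathfrak a'$, so $\mathfrak a'\prec_{\PPc_\fun(H)}\mathfrak a$, contradicting the $\preceq_{\PPc_\fun(H)}$-minimality of $\mathfrak a$. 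Consequently $1=|P_0|<|P_1|<\cdots<|P_n|=|X|$, which forces $n\le|X|-1$.

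For~\ref{it:prop:bounded-minimal-fzn(ii)}, suppose $H$ is Dedekind-finite and $X\in\PPc_\funt(H)$; we may assume $X$ is a non-unit of $\PPc_\funt(H)$ admitting at least one minimal factorization, the remaining cases being vacuous or immediate, and then $|X|\ge 2$ by Proposition~\ref{prop:pm-arith}\ref{it:prop:pm-arith(iv)}. I would pick $x\in X\cap H^\times$ and pass to $Y:=x^{-1}X=\{x^{-1}\}X$, which lies in $\PPc_\fun(H)$, is a non-unit, and satisfies $|Y|=|X|$ by Proposition~\ref{prop:pm-arith}\ref{it:prop:pm-arith(ii)}. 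Applying Lemma~\ref{lem:min-unit-adjust} inside $\PPc_\funt(H)$ yields $\mathsf L_{\PPc_\funt(H)}^\m(X)=\mathsf L_{\PPc_\funt(H)}^\m(Y)$; since $\jmath$ is an equimorphism by Proposition~\ref{prop:funt&fun-have-the-same-system-of-lengths}\ref{it:prop:funt&fun-have-the-same-system-of-lengths(i)} (this is where Dedekind-finiteness enters), Proposition~\ref{prop:min-equi}\ref{it:prop:min-equi(ii)} gives $\mathsf L_{\PPc_\funt(H)}^\m(Y)=\mathsf L_{\PPc_\funt(H)}^\m(\jmath(Y))\subseteq\mathsf L_{\PPc_\fun(H)}^\m(Y)$; and part~\ref{it:prop:bounded-minimal-fzn(i)} bounds every element of the last set by $|Y|-1=|X|-1$. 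Chaining these (in)equalities gives the claim.

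The only delicate step I anticipate is the strictness claim in~\ref{it:prop:bounded-minimal-fzn(i)} — correctly turning an equality $P_{k-1}=P_k$ of partial products into a strictly $\preceq_{\PPc_\fun(H)}$-smaller factorization of $X$ and invoking Proposition~\ref{preorder-facts}; part~\ref{it:prop:bounded-minimal-fzn(ii)} is then purely bookkeeping layered on top of cited results, and I expect no genuine obstacle.
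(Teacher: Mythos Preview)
Your proposal is correct and follows essentially the same route as the paper. For part~\ref{it:prop:bounded-minimal-fzn(i)} the paper also looks at the chain of partial products $A_1\cdots A_i$ and argues that each inclusion must be strict (else deleting $A_{i+1}$ yields a $\prec$-smaller factorization), concluding $n\le |X|-1$; for part~\ref{it:prop:bounded-minimal-fzn(ii)} the paper invokes Propositions~\ref{prop:funt&fun-have-the-same-system-of-lengths}\ref{it:prop:funt&fun-have-the-same-system-of-lengths(i)} and~\ref{prop:min-equi}\ref{it:prop:min-equi(iii)} in one line, which is exactly what your explicit translate-by-a-unit argument unpacks. One tiny edge case worth noting in your write-up of~\ref{it:prop:bounded-minimal-fzn(i)}: when $n=1$ and $k=1$ the subword $\mathfrak a'$ is empty, so the injection clause in the definition of $\preceq$ does not literally apply; but $P_0=P_1$ would force $A_1=\{1_H\}$, contradicting that $A_1$ is an atom, so the strictness at that step is immediate anyway.
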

\begin{proof}
	 \ref{it:prop:bounded-minimal-fzn(i)} The claim is trivial if $X = \{1_H\}$, when the only factorization of $X$ is the empty word; or if $X \in \mathscr A(\PPc_{\fin,1}(H))$, in which case $|X| \ge 2$ and $X$ has a unique factorization (of length $1$). So, assume that $X$ is neither the identity nor an atom of $\mathcal P_{\fin,1}(H)$, and let $\mathfrak a$ be a minimal factorization of $X$ (relative to $\PPc_{\fin,1}(H)$). Then $\mathfrak a = A_1*\cdots* A_n$, where $A_1, \ldots, A_n \in \mathscr A(\mathcal P_{\fin,1}(H))$ and $n \ge 2$; and we claim that
	 \[
	 A_1\cdots A_i \subsetneq A_1\cdots A_{i+1}, \quad \text{for every }i \in \llb 1, n-1 \rrb.
	 \]
	 In fact, let $i \in \llb 1, n-1 \rrb$. Since $1_H \in A_{i+1}$, it is clear that $A_1\cdots A_i \subsetneq A_1\cdots A_{i+1}$; and the inclusion must be strict, or else $A_1 \ast \cdots \ast A_i \ast \mathfrak b \prec_{\mathcal P_{\fin,1}(H)} \mathfrak a$, where $\mathfrak b := \varepsilon_{\mathscr A(\mathcal P_{\fin,1}(H))}$ if $i = n-1$ and $\mathfrak b := A_{i+2} \ast \cdots \ast A_n$ otherwise (contradicting the minimality of $\mathfrak a$). Consequently, we see that
	 $
	 2 \le |A_1\cdots A_i | < |A_1\cdots A_{i+1}| \le |X|$ for all $i \in \llb 1, n-1 \rrb$, and this implies at once that $n\le |X|-1$.
	
	\ref{it:prop:bounded-minimal-fzn(ii)} The conclusion is immediate from part \ref{it:prop:bounded-minimal-fzn(i)} and Propositions \ref{prop:funt&fun-have-the-same-system-of-lengths}\ref{it:prop:funt&fun-have-the-same-system-of-lengths(i)} and \ref{prop:min-equi}\ref{it:prop:min-equi(iii)}.
\end{proof}
\begin{theorem}\label{BmF-char}
	Let $H$ be a monoid. Then the following are equivalent:
	\begin{enumerate}[label={\rm (\alph{*})}]
		\item\label{it:BmF-char(a)} $1_H \ne x^2 \ne x$ for every $x \in H \setminus \{1_H\}$.
		\item\label{it:BmF-char(b)} $\PPc_\fun(H)$ is atomic.
		\item\label{it:BmF-char(c)} $\PPc_\fun(H)$ is \textup{BmF}.
		\item\label{it:BmF-char(d)} $\PPc_\fun(H)$ is \textup{FmF}.
		\item\label{it:BmF-char(e)} Every $2$-element subset $X$ of $H$ with $1_H \in X$ is an atom of $\mathcal P_\fun(H)$.
		\item\label{it:BmF-char(f)} $\PPc_\funt(H)$ is atomic.
		\item\label{it:BmF-char(g)} $\PPc_\funt(H)$ is \textup{BmF}.
		\item\label{it:BmF-char(h)} $\PPc_\funt(H)$ is \textup{FmF}.
		\item\label{it:BmF-char(i)} Every $2$-element subset $X$ of $H$ with $X \cap H^\times \ne \emptyset$ is an atom of $\mathcal P_\funt(H)$.
	\end{enumerate}
\end{theorem}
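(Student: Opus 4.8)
The plan is to show that each of \ref{it:BmF-char(b)}--\ref{it:BmF-char(i)} is equivalent to \ref{it:BmF-char(a)}. Theorem \ref{th:atomicity} already gives \ref{it:BmF-char(a)} $\Leftrightarrow$ \ref{it:BmF-char(b)}, and Lemma \ref{lem:2-elt-atoms}\ref{it:lem:2-elt-atoms(i)} gives \ref{it:BmF-char(a)} $\Leftrightarrow$ \ref{it:BmF-char(e)} (a $2$-element subset of $H$ containing $1_H$ has the form $\{1_H,x\}$ with $x \in H \setminus \{1_H\}$). Moreover, in \emph{any} monoid one has FmF $\Rightarrow$ BmF $\Rightarrow$ atomic: non-emptiness of $\mathsf{L}_H^\m(x)$ (resp.\ $\mathsf{Z}_H^\m(x)$) at a non-unit $x$ is equivalent, by Proposition \ref{prop:min-basics}\ref{it:prop:min-basics(ii)}, to $\mathcal Z_H(x) \ne \emptyset$, and $\mathsf{L}_H^\m(x)$ is the image of $\mathsf{Z}_H^\m(x)$ under the (well-defined, since $\mathscr{C}_H$ preserves word length) length map; hence \ref{it:BmF-char(d)} $\Rightarrow$ \ref{it:BmF-char(c)} $\Rightarrow$ \ref{it:BmF-char(b)} and \ref{it:BmF-char(h)} $\Rightarrow$ \ref{it:BmF-char(g)} $\Rightarrow$ \ref{it:BmF-char(f)}. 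It therefore remains to prove \ref{it:BmF-char(b)} $\Rightarrow$ \ref{it:BmF-char(d)}, \ref{it:BmF-char(a)} $\Leftrightarrow$ \ref{it:BmF-char(i)}, \ref{it:BmF-char(a)} $\Leftrightarrow$ \ref{it:BmF-char(f)}, and \ref{it:BmF-char(f)} $\Rightarrow$ \ref{it:BmF-char(h)}.

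For \ref{it:BmF-char(b)} $\Rightarrow$ \ref{it:BmF-char(d)}, fix a non-unit $X \in \PPc_\fun(H)$; then $\mathcal Z_{\PPc_\fun(H)}^\m(X) \ne \emptyset$ by atomicity and Proposition \ref{prop:min-basics}\ref{it:prop:min-basics(ii)}. If $A_1 \ast \cdots \ast A_n$ is a minimal factorization of $X$, then $n \le |X|-1$ by Proposition \ref{prop:bounded-minimal-fzn}\ref{it:prop:bounded-minimal-fzn(i)}, while $A_i \subseteq A_1 \cdots A_n = X$ for each $i$ because $1_H$ lies in every $A_j$ (Proposition \ref{prop:pm-arith}\ref{it:prop:pm-arith(i)}); so the atoms occurring in any minimal factorization of $X$ range over the finite set of subsets of $X$, whence $\mathcal Z_{\PPc_\fun(H)}^\m(X)$ — hence also $\mathsf Z_{\PPc_\fun(H)}^\m(X)$ — is finite. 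This same "atoms are subsets of the element being factored'' remark is used again below. For \ref{it:BmF-char(a)} $\Leftrightarrow$ \ref{it:BmF-char(i)} and \ref{it:BmF-char(f)} $\Rightarrow$ \ref{it:BmF-char(a)}: by Lemma \ref{lem:2-elt-atoms}\ref{it:lem:2-elt-atoms(ii)}, if $x^2 \in \{1_H,x\}$ for some $x \in H \setminus \{1_H\}$ then $\{1_H,x\}$ is a non-unit with no factorization into atoms of $\PPc_\funt(H)$ and in particular not an atom of $\PPc_\funt(H)$; since $\{1_H,x\}$ is one of the sets named in \ref{it:BmF-char(i)}, this yields \ref{it:BmF-char(i)} $\Rightarrow$ \ref{it:BmF-char(a)}, and likewise \ref{it:BmF-char(f)} $\Rightarrow$ \ref{it:BmF-char(a)} since an atomic monoid has no non-unit devoid of factorizations. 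Conversely, assuming \ref{it:BmF-char(a)}, Lemma \ref{lem:2-elt-atoms}\ref{it:lem:2-elt-atoms(i)} gives $\{1_H,x\} \in \mathscr{A}(\PPc_\fun(H))$ for every $x \in H \setminus \{1_H\}$, while Lemma \ref{lem:no-non-id-elts-of-small-order-implies-structure}\ref{it:lem:no-non-id-elts-of-small-order-implies-structure(i)} makes $H$ Dedekind-finite, so that $\mathscr{A}(\PPc_\fun(H)) \subseteq \mathscr{A}(\PPc_\funt(H))$ by Proposition \ref{prop:funt&fun-have-the-same-system-of-lengths}\ref{it:prop:funt&fun-have-the-same-system-of-lengths(ii)}; as any $2$-element subset meeting $H^\times$ equals a unit of $\PPc_\funt(H)$ times some such $\{1_H,x\}$, Proposition \ref{prop:unit-adjust}\ref{it:prop:unit-adjust(i)} then gives \ref{it:BmF-char(i)}.

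For \ref{it:BmF-char(a)} $\Rightarrow$ \ref{it:BmF-char(f)}: under \ref{it:BmF-char(a)}, $H$ is Dedekind-finite by Lemma \ref{lem:no-non-id-elts-of-small-order-implies-structure}\ref{it:lem:no-non-id-elts-of-small-order-implies-structure(i)}, so the natural embedding $\jmath \colon \PPc_\fun(H) \hookrightarrow \PPc_\funt(H)$ is an essentially surjective equimorphism by Proposition \ref{prop:funt&fun-have-the-same-system-of-lengths}\ref{it:prop:funt&fun-have-the-same-system-of-lengths(i)}; since $\PPc_\fun(H)$ is atomic (by \ref{it:BmF-char(a)} $\Leftrightarrow$ \ref{it:BmF-char(b)}), every non-unit of $\PPc_\funt(H)$ is associate, by essential surjectivity, to a non-unit $Y$ of $\PPc_\fun(H)$, which factors into atoms of $\PPc_\fun(H)$ and hence into atoms of $\PPc_\funt(H)$ by \ref{def:equimorphism(E2)}; one unit adjustment via Proposition \ref{prop:unit-adjust}\ref{it:prop:unit-adjust(i)} finishes the argument, so $\PPc_\funt(H)$ is atomic.

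The crux is \ref{it:BmF-char(f)} $\Rightarrow$ \ref{it:BmF-char(h)}, where the "atoms are subsets'' counting fails in $\PPc_\funt(H)$ because a factor of $X$ need not contain $1_H$. I would instead pass through $\PPc_\fun(H)$: from \ref{it:BmF-char(f)} $\Rightarrow$ \ref{it:BmF-char(a)} (already proved) we get $H$ Dedekind-finite and, via \ref{it:BmF-char(a)} $\Leftrightarrow$ \ref{it:BmF-char(b)} $\Rightarrow$ \ref{it:BmF-char(d)}, that $\PPc_\fun(H)$ is FmF with each $\mathcal Z_{\PPc_\fun(H)}^\m(Y)$ an \emph{honestly finite set of words}. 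For $Y \in \PPc_\fun(H) \setminus \{\{1_H\}\}$, Proposition \ref{prop:min-equi}\ref{it:prop:min-equi(i)} applied to the equimorphism $\jmath$ shows that every $\mathscr{C}_{\PPc_\funt(H)}$-class in $\mathsf Z_{\PPc_\funt(H)}^\m(Y)$ contains a word of $\mathcal Z_{\PPc_\fun(H)}^\m(Y)$, so $\mathsf Z_{\PPc_\funt(H)}^\m(Y)$ is finite; and for an arbitrary non-unit $X \in \PPc_\funt(H)$, essential surjectivity exhibits $X$ as a unit of $\PPc_\funt(H)$ times such a non-unit $Y$, and Lemma \ref{lem:min-unit-adjust} — whose length-preserving bijection $\mathcal Z_M^\m(z) \to \mathcal Z_M^\m(uzv)$ one checks carries $\mathscr{C}_M$-classes to $\mathscr{C}_M$-classes — transports finiteness of $\mathsf Z_{\PPc_\funt(H)}^\m(Y)$ to $\mathsf Z_{\PPc_\funt(H)}^\m(X)$. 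Non-emptiness follows from atomicity \ref{it:BmF-char(f)}, so $\PPc_\funt(H)$ is FmF, and the equivalences \ref{it:BmF-char(b)} $\Leftrightarrow$ \ref{it:BmF-char(c)} $\Leftrightarrow$ \ref{it:BmF-char(d)} and \ref{it:BmF-char(f)} $\Leftrightarrow$ \ref{it:BmF-char(g)} $\Leftrightarrow$ \ref{it:BmF-char(h)} close both chains. I expect the main obstacle to be precisely this last step: checking that unit adjustment and the equimorphism interact cleanly with $\mathsf Z^\m$ (rather than only with $\mathcal Z^\m$ or with $\mathsf L^\m$, which is all Lemma \ref{lem:min-unit-adjust} and Proposition \ref{prop:min-equi} explicitly assert), bearing in mind that a $\preceq_{\PPc_\fun(H)}$-minimal factorization need not be $\preceq_{\PPc_\funt(H)}$-minimal — cf.\ Example \ref{exa:strict-inclusion} — so the finiteness of $\mathsf Z_{\PPc_\funt(H)}^\m(Y)$ must be obtained by a covering, not a bijection, argument.
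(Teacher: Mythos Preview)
Your proposal is correct and follows essentially the same approach as the paper: the same pivotal ingredients appear in the same roles---Lemma \ref{lem:no-non-id-elts-of-small-order-implies-structure}\ref{it:lem:no-non-id-elts-of-small-order-implies-structure(i)} to get Dedekind-finiteness from \ref{it:BmF-char(a)}, the equimorphism of Proposition \ref{prop:funt&fun-have-the-same-system-of-lengths}\ref{it:prop:funt&fun-have-the-same-system-of-lengths(i)}, the covering argument via Proposition \ref{prop:min-equi}\ref{it:prop:min-equi(i)} to bound $\mathsf Z_{\PPc_\funt(H)}^\m$ by the already-finite $\mathcal Z_{\PPc_\fun(H)}^\m$, and the unit adjustment of Lemma \ref{lem:min-unit-adjust}---and the only differences are organizational (the paper chains \ref{it:BmF-char(b)} $\Rightarrow$ \ref{it:BmF-char(c)} $\Rightarrow$ \ref{it:BmF-char(d)} $\Rightarrow$ \ref{it:BmF-char(h)} and \ref{it:BmF-char(f)} $\Rightarrow$ \ref{it:BmF-char(i)}, whereas you route more implications directly through \ref{it:BmF-char(a)}). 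Your explicit remark that Lemma \ref{lem:min-unit-adjust}'s bijection respects $\mathscr C$-classes, and that the passage from $\mathcal Z_{\PPc_\fun(H)}^\m$ to $\mathsf Z_{\PPc_\funt(H)}^\m$ is a covering rather than a bijection, makes precise exactly the point the paper leaves implicit.
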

\begin{proof}
	We already know from Theorem \ref{th:atomicity} and Lemma \ref{lem:2-elt-atoms} that \ref{it:BmF-char(b)} $\Leftrightarrow$ \ref{it:BmF-char(a)} $\Leftrightarrow$ \ref{it:BmF-char(e)} and \ref{it:BmF-char(i)} $\Rightarrow$ \ref{it:BmF-char(a)}; while it is straightforward from our definitions that \ref{it:BmF-char(h)} $\Rightarrow$ \ref{it:BmF-char(g)} $\Rightarrow$ \ref{it:BmF-char(f)}. So, it will suffice to prove that \ref{it:BmF-char(b)}
	$\Rightarrow$ \ref{it:BmF-char(c)} $\Rightarrow$ \ref{it:BmF-char(d)} $\Rightarrow$ \ref{it:BmF-char(h)}
 and \ref{it:BmF-char(f)} $\Rightarrow$
	\ref{it:BmF-char(i)}.
	
	\ref{it:BmF-char(b)} $\Rightarrow$ \ref{it:BmF-char(c)}: If $X \in \mathcal P_\fun(H)$ is a non-unit, then $\mathcal{Z}_{\mathcal P_\fun(H)}(X)$ is non-empty, and by Propositions \ref{prop:min-basics}\ref{it:prop:min-basics(ii)} and \ref{prop:bounded-minimal-fzn}\ref{it:prop:bounded-minimal-fzn(i)} we have that $\emptyset \ne \mathsf{L}_{\mathcal P_\fun(H)}^\m(X) \subseteq \llb 1, |X|-1\rrb$. So, $\mathcal P_\fun(H)$ is BmF.
	
	\ref{it:BmF-char(c)} $\Rightarrow$ \ref{it:BmF-char(d)}: Let $X \in \mathcal P_\fun(H)$ be a non-unit. 
	By Proposition \ref{prop:pm-arith}\ref{it:prop:pm-arith(i)}, any atom of $\mathcal P_\fun(H)$ dividing $X$ must be a subset of $X$, and there are only finitely many of these (since $X$ is finite).
	Because a minimal factorization of $X$ is a bounded $\mathscr A(\mathcal P_\fun(H))$-word (by the assumption that $H$ is BmF), it follows that $X$ has finitely many minimal factorizations, and hence $\mathcal P_\fun(H)$ is FmF (since $X$ was arbitrary).
	
	\ref{it:BmF-char(d)} $\Rightarrow$ \ref{it:BmF-char(h)}: Pick a non-unit $X \in \mathcal P_{\fin,\times}(H)$, and let $u \in H^\times$ such that $uX \in \mathcal P_{\fin,1}(H)$. Since $\mathcal P_{\fin,1}(H)$ is FmF (by hypothesis), it is also atomic. Hence, by Theorem \ref{th:atomicity} and Lemma \ref{lem:no-non-id-elts-of-small-order-implies-structure}\ref{it:lem:no-non-id-elts-of-small-order-implies-structure(i)}, $H$ is Dedekind-finite, and so we have by Proposition \ref{prop:funt&fun-have-the-same-system-of-lengths}\ref{it:prop:funt&fun-have-the-same-system-of-lengths(i)} that the natural embedding $\mathcal P_{\fin,1}(H) \hookrightarrow \mathcal P_{\fin,\times}(H)$ is an essentially surjective equimorphism.  
	In particular, we infer from Proposition \ref{prop:min-equi}\ref{it:prop:min-equi(i)} that any minimal factorization of $uX$ in $\mathcal P_{\fin,\times}(H)$ is $\mathscr C_{\mathcal P_{\fin,\times}(H)}$-congruent to a minimal factorization of $uX$ in $\mathcal P_{\fin,1}(H)$.
	However, this makes $\mathsf{Z}_{\mathcal P_{\fin,\times}(H)}^\m(uX)$ finite, whence $\mathsf{Z}_{\mathcal P_{\fin,\times}(H)}^\m(X)$ must also be finite as a consequence of Lemma \ref{lem:min-unit-adjust}.

	\ref{it:BmF-char(f)} $\Rightarrow$ \ref{it:BmF-char(i)}: Let $X$ be a $2$-element subset of $H$ with $X \cap H^\times \ne \emptyset$. Then $X = uA$ for some unit $u \in H^\times$, where $A := u^{-1}X$ is a $2$-element subset of $H$ with $1_H \in H$; and since $\mathcal P_{\fin,\times}(H)$ is atomic (by hypothesis), we are guaranteed by Lemmas \ref{lem:2-elt-atoms} and \ref{lem:no-non-id-elts-of-small-order-implies-structure}\ref{it:lem:no-non-id-elts-of-small-order-implies-structure(i)} that $A$ is an atom of $\mathcal P_\fun(H)$ and $H$ is Dedekind-finite. Therefore, we conclude from Proposition \ref{prop:funt&fun-have-the-same-system-of-lengths}\ref{it:prop:funt&fun-have-the-same-system-of-lengths(ii)} that $X \in \mathscr A(\mathcal P_\funt(H))$.
\end{proof}

\begin{theorem}\label{prop:HF-exp-3}
	Let $H$ be a monoid. Then $\mathcal P_\fun(H)$ is \textup{HmF} if and only if $H$ is trivial or a cyclic group of order $3$.
\end{theorem}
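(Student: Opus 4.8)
The plan is to reduce the forward implication, via divisor-closed submonoids, to three ``building blocks'': $(\mathbf N,+)$, a cyclic group of odd order $\ge 5$, and the direct product of two cyclic groups of order $3$. This works because, for every submonoid $K$ of $H$, $\PPc_\fun(K)$ is a divisor-closed submonoid of $\PPc_\fun(H)$ (Proposition \ref{prop:pm-arith}\ref{it:prop:pm-arith(iii)}), so $\mathsf L_{\PPc_\fun(K)}^\m(X)=\mathsf L_{\PPc_\fun(H)}^\m(X)$ for all $X\in\PPc_\fun(K)$ (Proposition \ref{prop:min-basics}\ref{it:prop:min-basics(iii)}); hence exhibiting a $K\le H$ whose reduced power monoid has a non-unit with at least two $\preceq$-minimal lengths will contradict HmF. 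I also use throughout that $\PPc_\fun(H)$ is reduced (Proposition \ref{prop:pm-arith}\ref{it:prop:pm-arith(iv)}), so the associate relation on it is trivial and ``$\mathfrak a\prec_{\PPc_\fun(H)}\mathfrak b$'' simply means that $\mathfrak a$ is strictly shorter than $\mathfrak b$, has the same product, and uses a sub-multiset of the atoms of $\mathfrak b$.

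For ($\Leftarrow$): if $H$ is trivial there are no non-units, and if $H\cong C_3=\langle c\rangle$ the only non-units of $\PPc_\fun(H)$ are the atoms $\{1_H,c\}$ and $\{1_H,c^2\}$ (Lemma \ref{lem:2-elt-atoms}\ref{it:lem:2-elt-atoms(i)}), each with set of minimal lengths $\{1\}$, together with $\{1_H,c,c^2\}=\{1_H,c\}^{\ast 2}$, whose $\preceq$-minimal factorizations all have length $2$ (the only atoms dividing it are $\{1_H,c\}$ and $\{1_H,c^2\}$, and any product of three or more of them again equals $\{1_H,c,c^2\}$, hence is not minimal); so $\PPc_\fun(H)$ is HmF. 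For ($\Rightarrow$), assume $\PPc_\fun(H)$ is HmF. Then $\mathsf L^\m(X)\ne\emptyset$ for every non-unit $X$, so $\PPc_\fun(H)$ is atomic by Proposition \ref{prop:min-basics}\ref{it:prop:min-basics(ii)}, and Theorem \ref{th:atomicity} gives $1_H\ne x^2\ne x$ for all $x\in H\setminus\{1_H\}$; in particular $H$ has no element of order $2$, hence none of finite even order $2k$ (otherwise $x^k$ has order $2$), so every non-identity element of $H$ has order in $\{3,5,7,\dots\}\cup\{\infty\}$.

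I now rule out the undesired cases. If some $x\in H$ has infinite order, then $M:=\{x^i:i\in\mathbf N\}$ is a submonoid of $H$ isomorphic to $(\mathbf N,+)$, and in $\PPc_\fun(\mathbf N)$ the set $\{0,1,2,3\}$ has the factorizations $\{0,1\}\ast\{0,2\}$ (length $2$, minimal by Proposition \ref{prop:min-basics}\ref{it:prop:min-basics(i)}) and $\{0,1\}^{\ast 3}$ (length $3$, $\preceq$-minimal since every proper sub-multiset of its atoms has product $\{0,\dots,j\}$ with $j\le 2$, never $\{0,1,2,3\}$), so $\PPc_\fun(\mathbf N)$ is not HmF — a contradiction. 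Hence every non-identity element of $H$ has finite odd order $\ge 3$, and $H$ is a group by Lemma \ref{lem:no-non-id-elts-of-small-order-implies-structure}\ref{it:lem:no-non-id-elts-of-small-order-implies-structure(ii)}. If some $x\in H$ has order $m\ge 5$, then $\langle x\rangle_H$ is a subgroup of $H$ of order $m$, and in $\PPc_\fun(\langle x\rangle_H)$ the $4$-element set $Y:=\{1_H,x,x^2,x^3\}$ has the factorizations $\{1_H,x\}\ast\{1_H,x^2\}$ into atoms (both $\{1_H,x\}$ and $\{1_H,x^2\}$ are atoms, because $m$ is odd, so $x$ and $x^2$ both have order $m\ge 5$; length $2$, minimal by Proposition \ref{prop:min-basics}\ref{it:prop:min-basics(i)}) and $\{1_H,x\}^{\ast 3}$ (length $3$, $\preceq$-minimal since every proper sub-multiset of its atoms has product of cardinality $\le 3<4=|Y|$) — again a contradiction, so $H$ has exponent $3$. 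Finally, if $H$ has exponent $3$ and $|H|>3$, then $H$ is non-cyclic of exponent $3$ and so contains a subgroup $K\cong\langle g\rangle\times\langle h\rangle$ with $g,h$ of order $3$ (a non-cyclic group of exponent $p$ always has such a subgroup, a minimal non-cyclic subgroup of it having no element of order $p^2$ and thus being elementary abelian of rank $2$). In $\PPc_\fun(K)$, the sets $A:=\{1_H,g,h\}$ and $B:=\{1_H,gh,gh^2\}$ are atoms (by a short computation in the spirit of Lemma \ref{lem:2-elt-atoms}, using $h\notin\langle g\rangle$ and $gh^2\notin\langle gh\rangle$), and one checks that $Y:=AB=\{1_H,g\}\{1_H,h\}\{1_H,gh\}=\{1_H,g,h,gh,g^2h,gh^2,g^2h^2\}$ has cardinality $7$. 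Then $Y=A\ast B$ is a length-$2$ minimal factorization, while $Y=\{1_H,g\}\ast\{1_H,h\}\ast\{1_H,gh\}$ is a length-$3$ factorization into atoms that is $\preceq$-minimal, because every product of at most two of $\{1_H,g\}$, $\{1_H,h\}$, $\{1_H,gh\}$ has cardinality $\le 4<7=|Y|$. This last contradiction forces $|H|\le 3$, so $H$ is trivial or isomorphic to $C_3$.

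The main point requiring care is the exponent-$3$, $|H|>3$ case: one must produce a single set $Y$ in the reduced power monoid of $\langle g\rangle\times\langle h\rangle$ carrying both a $2$-atom decomposition and an \emph{irredundant} $3$-atom decomposition. The $7$-element set $\{1_H,g\}\{1_H,h\}\{1_H,gh\}$ — which conveniently also equals $\{1_H,g,h\}\{1_H,gh,gh^2\}$ — does this, and the crude bound ``each pairwise product of the three $2$-element atoms has at most $4<7$ elements'' is precisely what makes the length-$3$ factorization $\preceq$-minimal. The only external input is the elementary group-theoretic fact that a non-cyclic group of exponent $3$ contains a copy of $C_3\times C_3$; everything else is routine bookkeeping.
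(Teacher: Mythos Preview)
Your proof is correct and follows the same skeleton as the paper: deduce atomicity, use the set $\{1_H,x,x^2,x^3\}$ (with the pair of factorizations $\{1_H,x\}\ast\{1_H,x^2\}$ and $\{1_H,x\}^{\ast 3}$) to force exponent $3$, and then locate a copy of $C_3\times C_3$ inside $H$ to produce a non-unit with two distinct minimal lengths. The only substantive difference is the witness in the last step: the paper works with the full $9$-element group $K=\langle y,z\rangle$ and the pair of minimal factorizations $\{1_H,y,z\}^{\ast 2}$ (length $2$) and $\{1_H,y\}^{\ast 2}\ast\{1_H,z\}^{\ast 2}$ (length $4$), whereas you use a $7$-element subset with minimal lengths $2$ and $3$; both choices work equally well.

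One point to tighten: your appeal to a ``minimal non-cyclic subgroup'' silently assumes finiteness, but $H$ could a priori be an infinite group of exponent $3$. The fix is immediate---pick $g\ne 1_H$ and $h\notin\langle g\rangle$, note that $\langle g,h\rangle$ is a $2$-generated group of exponent $3$ and hence finite (a quotient of $B(2,3)$, which has order $27$), and then run your minimal-non-cyclic argument inside this finite $3$-group. (The paper's direct appeal to a nontrivial center of $H$ via Gorenstein has the same gap, patched the same way.)
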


\begin{proof}
	The ``if'' part is an easy consequence of Theorem \ref{BmF-char} and Propositions \ref{prop:bounded-minimal-fzn}\ref{it:prop:bounded-minimal-fzn(i)} and \ref{prop:min-basics}\ref{it:prop:min-basics(i)}, when considering that, if $H$ is trivial or a cyclic group of order $3$, then $1_H \ne x^2 \ne x$ for all $x \in H \setminus \{1_H\}$ and every non-empty subset of $H$ has at most $3$ elements.

	As for the other direction, suppose $\mathcal P_\fun(H)$ is HmF and $H$ is non-trivial. Then $\mathcal P_\fun(H)$ is atomic, and we claim that $H$ is a $3$-group. By Theorem \ref{th:atomicity} and Lemma \ref{lem:no-non-id-elts-of-small-order-implies-structure}\ref{it:lem:no-non-id-elts-of-small-order-implies-structure(ii)}, it suffices to show that
	$
	x^3 \in \{1_H, x, x^2\}$ for every $x \in H$, since this in turn implies (by induction) that $\langle x \rangle_H \subseteq \{1_H, x, x^2\}$ and $\ord_H(x) \le 3$.
	
	{Assume to the contrary} that $x^3 \notin \{1_H, x, x^3\}$ for some $x \in H$, and set $X := \{1_H, x, x^2, x^3\}$. By Theorem \ref{BmF-char}, $\mathfrak a := \{1_H, x\}^{\ast 3}$ and $\mathfrak b := \{1_H, x\} \ast \{1_H, x^2\}$ are both factorizations of $X$ in $\mathcal P_\fun(H)$; and in light of Proposition \ref{prop:min-basics}\ref{it:prop:min-basics(i)}, $\mathfrak b$ is in fact a minimal factorization (of length $2$). Then $\mathfrak a$ cannot be minimal, because $\mathcal P_\fun(H)$ is HmF and $\mathfrak a$ has length $3$. However, since $\mathcal P_\fun(H)$ is a reduced monoid (and $X$ is not an atom), this is only possible if $x^3 \in X = \{1_H, x\}^2$, a contradiction.
	
	So, $H$ is a $3$-group, and as such it has a non-trivial center $Z(H)$, see e.g. \cite[Theorem 2.11(i)]{Gor1980}. Let $z$ be an element in $Z(H) \setminus \{1_H\}$, and suppose for a contradiction that $H$ is not cyclic. Then we can choose some element $y \in H \setminus \gen{z}_H$, and it follows from the above that $K := \gen{y,z}_H$ is an abelian subgroup of $H$ with $\ord_H(y) = \ord_H(z) = 3$ and $|K| = 9$. 
	We will prove that $K$ has $\preceq_{\mathcal P_\fun(H)}$-minimal factorizations of more than one length, which is a contradiction and finishes the proof.
	
	Indeed, we are guaranteed by Theorem \ref{BmF-char} that $\mathfrak c := \{1_H,y\}^{\ast 2} \ast \{1_H,z\}^{\ast 2}$ is a length-$4$ factorization of $K$ in $\PPc_\fun(H)$; and it is actually a minimal factorization, because removing one or more atoms from $\mathfrak c$ yields an $\mathscr A(\mathcal P_\fun(H))$-word whose image under $\pi_{\mathcal P_\fun(H)}$ has cardinality at most $8$ (whereas we have already noted that $|K| = 9$).
	On the other hand, it is not difficult to check that $A := \{1_H, y, z\}$ is an atom of $\mathcal P_\fun(H)$: If $\{1_H, y, z\} = YZ$ for some $Y, Z \in \mathcal P_\fun(H)$ with $|Y|, |Z| \ge 2$, then $Y, Z \subseteq \{1_H, y, z\}$ and $Y \cap Z = \{1_H\}$, whence $YZ = \{1_H, y\}\cdot\{1_H, z\} = K \ne A$. This in turn implies that $A^{\ast 2}$ is a length-$2$ factorization of $K$ in $\PPc_\fun(H)$, and it is minimal by Proposition \ref{prop:min-basics}\ref{it:prop:min-basics(i)}.
	So, we are done.
\end{proof}

\begin{corollary}\label{cor:when-reduced-pm-is-minimally-factorial}
	Let $H$ be a monoid. Then $\mathcal P_\fun(H)$ is minimally factorial if and only if $H$ is trivial.
\end{corollary}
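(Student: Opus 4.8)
The plan is to derive the statement as a quick consequence of Theorem~\ref{prop:HF-exp-3}. The first step is to record the (essentially formal) observation that minimal factoriality implies HmF: if $\mathsf Z_{\mathcal P_\fun(H)}^\m(X)$ is a singleton, then all minimal factorizations of $X$ lie in one $\mathscr C_{\mathcal P_\fun(H)}$-class, and since $\mathscr C_{\mathcal P_\fun(H)}$-congruent words share the same word length (cf.\ Proposition~\ref{preorder-facts}\ref{it:prop:preorder-facts(iv)}), the set $\mathsf L_{\mathcal P_\fun(H)}^\m(X)$ is a singleton too. Hence, if $\mathcal P_\fun(H)$ is minimally factorial, it is in particular HmF, and Theorem~\ref{prop:HF-exp-3} forces $H$ to be trivial or a cyclic group of order $3$.

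The second step is to eliminate the order-$3$ case by producing a single non-unit of $\mathcal P_\fun(H)$ with two non-congruent minimal factorizations. Write $H = \langle x \rangle_H$ with $\ord_H(x)=3$ and take $X := H = \{1_H, x, x^2\}$, which is a non-unit of $\mathcal P_\fun(H)$ (it has three elements, while the units are the singletons). By Lemma~\ref{lem:2-elt-atoms}\ref{it:lem:2-elt-atoms(i)} both $\{1_H,x\}$ and $\{1_H,x^2\}$ are atoms of $\mathcal P_\fun(H)$, and a one-line computation gives $\{1_H,x\}^2 = \{1_H,x^2\}^2 = X$. Thus $\mathfrak a := \{1_H,x\}^{\ast 2}$ and $\mathfrak b := \{1_H,x^2\}^{\ast 2}$ are factorizations of $X$ of length $2$, hence both minimal by Proposition~\ref{prop:min-basics}\ref{it:prop:min-basics(i)}. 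Since $\mathcal P_\fun(H)$ is reduced, the atoms $\{1_H,x\}$ and $\{1_H,x^2\}$ are associate only if equal, which they are not; so $\mathfrak a$ and $\mathfrak b$ are not $\mathscr C_{\mathcal P_\fun(H)}$-congruent, and $|\mathsf Z_{\mathcal P_\fun(H)}^\m(X)| \ge 2$, contradicting minimal factoriality. This leaves only the trivial case.

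For the converse, if $H$ is trivial then $\mathcal P_\fun(H) = \{\{1_H\}\}$ is itself the trivial monoid, so it has no non-units and is vacuously minimally factorial. I do not expect a genuine obstacle here: the only things to be careful about are the bookkeeping facts that ``$\mathsf Z^\m$ a singleton'' implies ``$\mathsf L^\m$ a singleton'' and that associativity collapses to equality in the reduced monoid $\mathcal P_\fun(H)$; neither is deep, and the whole corollary is really Theorem~\ref{prop:HF-exp-3} plus a one-line witness in the order-$3$ case.
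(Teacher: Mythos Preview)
Your argument is correct and follows essentially the same route as the paper: use Theorem~\ref{prop:HF-exp-3} to reduce to $H$ trivial or cyclic of order $3$, then in the latter case exhibit the two non-congruent minimal factorizations $\{1_H,x\}^{\ast 2}$ and $\{1_H,x^2\}^{\ast 2}$ of $H$ (invoking Lemma~\ref{lem:2-elt-atoms}\ref{it:lem:2-elt-atoms(i)}, Proposition~\ref{prop:min-basics}\ref{it:prop:min-basics(i)}, and reducedness of $\mathcal P_\fun(H)$). Your explicit remark that minimal factoriality implies HmF is the one small step the paper leaves implicit, but otherwise the proofs coincide.
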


\begin{proof}
	The ``if'' part is obvious. For the other direction, assume by way of contradiction that $\mathcal P_\fun(H)$ is minimally factorial but $H$ is non-trivial. Then $\mathcal P_\fun(H)$ is HmF, and we obtain from Theorem \ref{prop:HF-exp-3} that $H$ is a cyclic group of order $3$. Accordingly, let $x$ be a generator of $H$. By Lemma \ref{lem:2-elt-atoms}\ref{it:lem:2-elt-atoms(i)} and Proposition \ref{prop:min-basics}\ref{it:prop:min-basics(i)}, $\mathfrak a := \{1_H, x\}^{\ast 2}$ and $\mathfrak b := \{1_H, x^2\}^{\ast 2}$ are both minimal factorizations of $H$ in $\mathcal P_{\fin,1}(H)$. However, $(\mathfrak a, \mathfrak b) \notin \mathscr C_{\mathcal P_\fun}(H)$, because $\mathcal P_\fun(H)$ is a reduced monoid. Therefore, $\mathcal P_\fun(H)$ is not minimally factorial, so leading to a contradiction and completing the proof.
\end{proof}
At this point, we have completely characterized the correlation between the ground monoid $H$ and whether $\PPc_\fun(H)$ has factorization properties such as atomicity, BFness, etc., and their minimal counterparts.
In most cases, this extends to a characterization of whether the same properties hold for $\PPc_\funt(H)$, with the exception of the gap suggested by Theorem \ref{prop:HF-exp-3} and Corollary \ref{cor:when-reduced-pm-is-minimally-factorial}.
In particular, it still remains to determine the monoids $H$ which make $\PPc_\funt(H)$ HmF or minimally factorial.
However, what we have shown indicates, we believe, that the arithmetic of $\PPc_\fun(H)$ and $\PPc_\funt(H)$ is robust and ripe for more focused study.

\section{Cyclic monoids and interval length sets}
\label{sec:cyclic-case} 
For those monoids $H$ with $\PPc_\fun(H)$ atomic, we have by Proposition \ref{lem:no-non-id-elts-of-small-order-implies-structure} that the semigroup generated by an element $x\in H$ is isomorphic either to $\ZZb/n\ZZb$ or to $\NNb$ under addition.
As such, we will concentrate throughout on factorizations in $\PPc_{\fin,0}(\mathbf{Z}/n\mathbf{Z})$ and also mention some results on $\PPc_{\fin,0}(\NNb)$ which are discussed in detail in \cite[\S{ }4]{FTr17}.
At the end we will return to the general case, where the preceding discussion will culminate in a realization result (Theorem \ref{th:interval-lengths}) for sets of minimal lengths of $\PPc_\fun(H)$.

We invite the reader to review \S{ }\ref{subsec:generalities} before reading further. Also, note that, through the whole section, we have replaced the notation $\PPc_\fun(H)$ with $\PPc_{\fin,0}(H)$ when $H$ is written additively (cf. Example \ref{exa:strict-inclusion}).

\begin{definition}\label{NR-factorization}
	Let $X \in \mathcal P_{\fin,0}(\ZZb/n\ZZb)$. We say that a non-empty factorization $\mathfrak a = A_1 \ast \cdots \ast A_\ell \in \mathcal{Z}(X)$ is a \emph{non-reducible factorization} (or, shortly, an \emph{\textup{NR}-factorization}) if $ \max\hat{A}_1 + \dots + \max\hat{A}_\ell = \max \hat{X}$.
\end{definition}

This condition on factorizations will allow us to bring calculations up to the integers, where sumsets are more easily understood.
More importantly, NR-factorizations are very immediately relevant to our investigation of minimal factorizations.

\begin{lemma}\label{NR-factorizations-are-minimal}
Any \textup{NR}-factorization in $\PPc_{\fin,0}(\mathbf{Z}/n\mathbf{Z})$ is a minimal factorization.
\end{lemma}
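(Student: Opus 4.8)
The plan is to argue by contradiction: suppose $\mathfrak a = A_1 \ast \cdots \ast A_\ell$ is an NR-factorization of $X$ in $\PPc_{\fin,0}(\mathbf Z/n\mathbf Z)$ that is \emph{not} $\preceq$-minimal. Since every word of length $0$, $1$, or $2$ is automatically minimal by Proposition \ref{prop:min-basics}\ref{it:prop:min-basics(i)}, we may assume $\ell \ge 3$, and by Proposition \ref{preorder-facts}\ref{it:prop:preorder-facts(iv)} the failure of minimality means there is a \emph{proper} sub-multiset of the atoms $\{A_1, \dots, A_\ell\}$, indexed by some $I \subsetneq \llb 1, \ell \rrb$, whose setwise sum $\sum_{i \in I} A_i$ still equals $X$ (using commutativity of $(\mathbf Z/n\mathbf Z,+)$, associativity up to units is just equality of the sumset, and each $A_i$ contains $\overline 0$ so the sub-sum is contained in $X$; the point is that it is all of $X$). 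Fix such an $I$ and let $j \in \llb 1, \ell \rrb \setminus I$.

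The key step is to pass to the integers via the hat-lifting notation of \S\ref{subsec:generalities}. For any subset $S$ of $\llb 1, \ell \rrb$, write $M(S) := \sum_{i \in S} \max \hat A_i \in \mathbf N$. The NR-condition says $M(\llb 1,\ell\rrb) = \max \hat X$. On the other hand, for the sumset over any index set $S$ we always have the inequality $\max \widehat{\bigl(\sum_{i \in S} A_i\bigr)} \le \sum_{i \in S} \max \hat A_i = M(S)$, because a representative of a maximal element of the sumset is a sum of representatives bounded by $\max \hat A_i$ each, and reducing mod $n$ can only decrease the minimal non-negative representative (or more carefully: $\widehat{a+b} \le \hat a + \hat b$ for all residues $a,b$). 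Applying this with $S = I$ gives
\[
\max \hat X = \max \widehat{\Bigl(\textstyle\sum_{i \in I} A_i\Bigr)} \le M(I) = M(\llb 1,\ell\rrb) - \sum_{i \notin I} \max \hat A_i \le \max \hat X - \max \hat A_j.
\]
Hence $\max \hat A_j \le 0$, i.e. $\max \hat A_j = 0$, forcing $A_j = \{\overline 0\}$. But $\{\overline 0\}$ is the identity of $\PPc_{\fin,0}(\mathbf Z/n\mathbf Z)$, hence a unit, so it is not an atom — contradicting that $A_j$ is one of the atoms appearing in the factorization $\mathfrak a$.

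I expect the main obstacle to be purely bookkeeping: making the inequality $\widehat{a+b} \le \hat a + \hat b$ and its iterated form $\max \widehat{(\sum A_i)} \le \sum \max \hat A_i$ clean and precise, and being careful that "not minimal" really does deliver a proper \emph{sub}-word whose image is still exactly $X$ (this is where Proposition \ref{preorder-facts}\ref{it:prop:preorder-facts(iv)} and the injection $\sigma$ in the definition of $\prec$ are used, together with the observation that each $A_i \ni \overline 0$ so dropping atoms can only shrink the sumset, never enlarge it — so equality to $X$ persists precisely when the non-minimality witness has the same product). Everything else is immediate, and the argument is short; no combinatorics beyond the trivial containment $\sum_{i\in I} A_i \subseteq X$ is needed here, in contrast with the Cauchy–Davenport/Vosper machinery used elsewhere in the section.
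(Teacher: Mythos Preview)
Your proof is correct and follows essentially the same approach as the paper. Both arguments reduce non-minimality to the existence of a proper subfamily $I \subsetneq \llb 1,\ell\rrb$ with $\sum_{i\in I} A_i = X$, and then exploit the NR-condition to get a contradiction via the lifted maxima: the paper observes that $\hat a_1 + \cdots + \hat a_k \le \sum_{i\le k}\max\hat A_i < \max\hat X$ gives a modular impossibility, while you package the same inequality as $\max\hat X \le M(I) = \max\hat X - \sum_{i\notin I}\max\hat A_i$, forcing $\max\hat A_j = 0$. These are two phrasings of the same estimate; your version is arguably cleaner, since it avoids the explicit ``$\hat x \equiv \hat a_1+\cdots+\hat a_k \bmod n$ but both lie in $[0,n-1]$'' step.
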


\begin{proof}
Let $\mathfrak{a} = A_1 \ast \cdots \ast A_\ell$ be an NR-factorization in $\mathcal P_{\fin,0}(\mathbf Z/n\mathbf Z)$ of length $\ell$, and assume for the sake of contradiction that $\mathfrak a$ is not minimal. Since
$\PPc_{\fin,0}(\mathbf{Z}/n\mathbf{Z})$ is reduced and commutative, the factorizations which are $\mathscr C_{\PPc_{\fin,0}(\mathbf{Z}/n\mathbf{Z})}$-congruent to $\mathfrak{a}$ are exactly the words $A_{\sigma(1)} \ast \cdots \ast A_{\sigma(\ell)}$, where $\sigma$ is an arbitrary permutation of the interval $\llb 1, \ell \rrb$.
So, on account of Proposition \ref{prop:min-basics}\ref{it:prop:min-basics(i)}, the non-minimality of $\mathfrak{a}$ implies without loss of generality that $\ell \ge 3$ and $
X := A_1 + \cdots + A_\ell = A_1 + \cdots + A_k$ for some $k \in \llb 1, \ell-1 \rrb$. 

Now, let $x \in X$ such that $\hat{x} = \max \hat{X}$. Using that $\mathfrak a$ is an NR-factorization, and considering that, for each $i \in \llb 1, \ell \rrb$, $A_i$ is an atom of $\mathcal P_{\fin,0}(\mathbf Z/n\mathbf Z)$ and hence $\max \hat{A}_i \ge 1$, it follows from the above that
\begin{equation}\label{equ:inequ-with-maxima}
\hat{x} = \max\hat{A}_1+\max\hat{A}_2+\cdots+\max\hat{A}_\ell > \max\hat{A}_1 + \dots + \max\hat{A}_k,
\end{equation}
On the other hand, since $X = A_1 + \cdots + A_k$, there are $a_1 \in A_1,\dots, a_k \in A_k$ such that $a_1+\dots+ a_k = x$, from which we see that $\hat{x} \equiv \hat{a}_1 + \cdots + \hat{a}_k \bmod n$. But it follows from \eqref{equ:inequ-with-maxima} that
$
0 \le \hat{a}_1+\dots+ \hat{a}_k < \hat{x} < n$, and this implies $\hat{x} \not\equiv \hat{a}_1 + \cdots + \hat{a}_k \bmod n$ (recall that, by definition, $\hat{X} \subseteq \llb 0, n-1 \rrb$). So we got a contradiction, showing that $\mathfrak a$ was minimal and completing the proof.
\end{proof}

We are aiming to find, for every $k\in\llb 2, n-1\rrb$, a set $X_k\in\PPc_{\fin,0}(\ZZb/n\ZZb)$ for which $\mathsf{L}^\m(X_k) = \llb 2, k \rrb$, on the assumption that $n\ge 5$ is odd:
Most of the difficulty lies in showing that $2\in \mathsf{L}^\m(X_k)$.
To do this, we first need to produce some large atoms.

\begin{proposition}\label{large-atom-construction}
Let $n\ge 5$ be odd.
Then the following sets are atoms of $\mathcal P_{\fin,0}(\mathbf Z/n\mathbf Z)$:
\begin{enumerate}[label={\rm (\roman{*})}]
\item\label{it:large-atom-construction(i)} $B_h := \bigl\{\overline{0}\} \cup \{\overline{1},\overline{3},\dots, \overline{h}\bigr\}$ for odd $h\in \llb 1,(n-1)/2 \rrb$.
\item\label{it:large-atom-construction(ii)} $C_1 := \bigl\{\overline{0}, \overline{2}\bigr\}$, $C_3 := \bigl\{\overline{0},\overline{2},\overline{3},\overline{4}\bigr\}$, and $C_\ell := B_\ell\cup\bigl\{\overline{\ell+1}\bigr\}$ for odd $\ell\in \llb 5, (n-1)/2 \rrb$.
\end{enumerate}
\end{proposition}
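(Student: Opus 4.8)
The plan is to exploit that $\PPc_{\fin,0}(\mathbf{Z}/n\mathbf{Z})$ is reduced, so a subset $X$ with $|X|\ge 2$ is an atom if and only if it cannot be written as $Y+Z$ with $|Y|,|Z|\ge 2$; moreover, in any such expression both $Y$ and $Z$ contain $\overline{0}$, whence $Y,Z\subseteq Y+Z=X$. First I would dispatch the two-element sets $B_1=\{\overline{0},\overline{1}\}$ and $C_1=\{\overline{0},\overline{2}\}$ directly from Lemma \ref{lem:2-elt-atoms}\ref{it:lem:2-elt-atoms(i)}, since $n\ge 5$ forces $\overline{0}\ne\overline{2}\ne\overline{1}$ and $\overline{0}\ne\overline{4}\ne\overline{2}$. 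For the larger sets the key device is to lift to $\mathbf Z$: each $X$ in the statement has $\hat X\subseteq\llb 0,m\rrb$ with $m:=\max\hat X$ (so $m=h$, $4$, $\ell+1$ for $B_h$, $C_3$, $C_\ell$), and as long as $2m\le n-1$ no pair-sum of elements of $X$ wraps around modulo $n$; hence a decomposition $X=Y+Z$ yields $\max\hat Y+\max\hat Z=\max\hat X=m$ with $\max\hat Y,\max\hat Z\ge 1$.

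For $B_h$ with $h\ge 3$ this finishes the job by a parity count: writing $p:=\max\hat Y$ and $q:=\max\hat Z$, we have $p+q=h$ with $1\le p,q\le h-1$, but every positive residue of $\hat B_h$ below $h$ is odd, so $p,q$ are odd, $p+q$ is even, and $h$ is odd — contradiction. For $C_\ell$ with odd $\ell\ge 5$ and $2(\ell+1)\le n-1$: from $\max\hat Y+\max\hat Z=\ell+1$ and an inspection of how $\overline{\ell}$ is realized as a sum, one factor — say $Z$ — must equal $\{\overline{0},\overline{1}\}$; then $C_\ell=Y\cup(Y+\overline{1})$ with $\overline{0},\overline{\ell}\in Y$, and chasing $\overline{2},\overline{3},\overline{4}$ (from $\overline{2}\notin C_\ell$ get $\overline{1}\notin Y$, hence $\overline{3}\in Y$, hence $\overline{4}\in Y+\overline{1}\subseteq C_\ell$) contradicts $\overline{4}\notin C_\ell$. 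For $C_3=\{\overline{0},\overline{2},\overline{3},\overline{4}\}$ the same computation forces $Y=Z=\{\overline{0},\overline{2}\}$, whose sumset $\{\overline{0},\overline{2},\overline{4}\}$ omits $\overline{3}$; this argument needs $2\cdot 4\le n-1$, and the value $n=7$ is verified directly (note that for $n=5$ one has $C_3=\{\overline{0},\overline{3}\}+\{\overline{0},\overline{4}\}$, so there the hypothesis should be taken as $n\ge 7$).

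The delicate point — and the step I expect to be the real obstacle — is $C_\ell$ at the boundary $\ell=(n-1)/2$, equivalently $n\equiv 3\pmod{4}$: then $2(\ell+1)=n+1$, so the pair-sums $\ell+(\ell+1)$ and $(\ell+1)+(\ell+1)$ wrap to $\overline{0}$ and $\overline{1}$, and the clean identity $\max\hat Y+\max\hat Z=\max\hat X$ can fail. Here I would branch on $\max\hat Y+\max\hat Z\in\{\ell+1,n,n+1\}$ — the only options, since any intermediate value would put a residue into $\llb\ell+2,n-1\rrb$, which is disjoint from $\hat C_\ell$. The value $\ell+1$ reduces to the previous case, because then no pair-sum actually wraps. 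For the values $n$ and $n+1$, $\overline{\ell+1}$ lies in $\hat Y$ (and, when the sum is $n+1$, also in $\hat Z$), and since $\overline{\ell+1}$ added to any odd residue in $\llb 1,\ell-2\rrb$ falls outside $C_\ell$, we are forced to $\hat Y,\hat Z\subseteq\{\overline{0},\overline{\ell},\overline{\ell+1}\}$; a direct computation of the sumset of two such three-element sets then shows $Y+Z$ cannot contain $\overline{3}$, whereas $\overline{3}\in C_\ell$ for $\ell\ge 5$ — contradiction. Keeping the wraparound bookkeeping straight across these sub-cases is the most error-prone part; everything else is a short interval-and-parity argument.
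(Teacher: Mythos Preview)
Your argument works but follows a different path from the paper's. For (i) the paper fixes the element $\overline{1}$ and notes that $\overline{1}+a$ is even for any nonzero $a$ in the other factor, hence outside $\hat B_h$; your max-parity trick ($p+q=h$ with $p,q$ odd) is arguably tidier. For (ii) with $\ell\ge 5$ the paper does \emph{not} separate boundary from non-boundary: it splits instead on whether $\overline{\ell+1}$ appears in some factor. If not, all nonzero pair-sums are even and $\le 2\ell<n$, forcing both factors to be two-element sets; if $\overline{\ell+1}\in Y$, then the same observation you make (that $\overline{\ell+1}$ plus a small odd residue overshoots $\hat C_\ell$ without wrapping) gives $X\subseteq\{\overline{0},\overline{\ell},\overline{\ell+1}\}$ and then $Y\subseteq\{\overline{0},\overline{1},\overline{\ell},\overline{\ell+1}\}$, after which the possibly-wrapped terms $\overline{2\ell+1},\overline{2\ell+2}$ either equal $\overline{0},\overline{1}$ or lie outside $C_\ell$, so $|X+Y|\le 4$ uniformly. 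Thus the paper's case division absorbs your ``delicate'' wraparound bookkeeping; in exchange you get the pleasant reduction to $Z=\{\overline{0},\overline{1}\}$ in the generic range.

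One small slip: in your sub-case $\max\hat Y+\max\hat Z=n=2\ell+1$ you only have $\overline{\ell+1}\in Y$ and $\overline{\ell}\in Z$, and $\overline{\ell}+\overline{1}=\overline{\ell+1}\in C_\ell$, so the correct containment for $Y$ is $\{0,1,\ell,\ell+1\}$ rather than $\{0,\ell,\ell+1\}$. This does no damage, since $\overline{3}$ is still absent from the resulting sumset. Finally, your remark about $n=5$ is correct: $C_3=\{\overline{0},\overline{3}\}+\{\overline{0},\overline{4}\}$ in $\mathbf Z/5\mathbf Z$, so the proposition as stated fails there. The paper glosses $C_3$ as ``not too difficult to see'' and in fact only invokes it downstream for $n\ge 9$, so the oversight is harmless for the paper's applications.
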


\begin{proof}
\ref{it:large-atom-construction(i)} Let $h \in \llb 1, (n-1)/2 \rrb$ be odd, and suppose that $B_h = X + Y$ for some $X,Y\in \mathcal P_{\fin,0}(\mathbf Z/n\mathbf Z)$.
Then $X$ and $Y$ are subsets of $B_h$, so
\[
\max\hat{X} + \max\hat{Y} \le 2\max\hat{B}_h = 2h \le n-1.
\]
Because $\overline{1}\in B_h$, we must have $\overline{1}\in X\cup Y$.
However, if $\overline{1}\in X$ and $a\in Y$ for some $a \in B_h\setminus\{\overline{0}\}$, then  $1+\hat{a} \in \hat{X} + \hat{Y}$ is even, which is impossible since $\max\hat{X} + \max\hat{Y} < n$ and $\hat{B}_h \setminus\{0\}$ consists only of odd numbers.
Thus $Y = \{\overline{0}\}$, and hence $B_h$ is an atom.

\ref{it:large-atom-construction(ii)}
$C_1$ is an atom by Lemma \ref{lem:2-elt-atoms}\ref{it:lem:2-elt-atoms(i)} and it is not too difficult to see that so is $C_3$. Therefore, let $\ell\ge 5$ and suppose $C_\ell = X+Y$ for some $X,Y\in \mathcal P_{\fin,0}(\mathbf Z/n\mathbf Z)$ with $X,Y\neq\bigl\{\overline{0}\bigr\}$.

First assume that $\overline{\ell+1}\notin X\cup Y$. Then $\hat{X}$ and $\hat{Y}$ consist only of odd integers, so $\hat{x}+\hat{y}$ is an even integer in the interval $\llb 2, n-1 \rrb$ for all $x\in X\setminus\bigl\{\overline{0}\bigr\}$ and $y\in Y\setminus\bigl\{\overline{0}\bigr\}$.
However, $\hat{X}+\hat{Y} = \hat{C}_\ell$ and the only non-zero even element of $\hat{C}_\ell$ is $\ell+1$. Thus, it must be that $X = \bigl\{\overline{0},x\bigr\}$ and $Y = \bigl\{\overline{0},y\bigr\}$ for some non-zero $x, y \in \mathbf Z/n\mathbf Z$, with the result that $|X+Y| \le 4 < |C_\ell|$, a contradiction.

It follows (without loss of generality) that $\overline{\ell+1}\in Y$.
Then $X \subseteq \bigl\{\overline{0},\overline{\ell},\overline{\ell+1}\bigr\}$, for, if $x\in X$ with $0 < \hat{x} < \ell$, then $\hat{x} + \ell+1 \in \hat{C}_\ell$, which is impossible since $\hat{x}+\ell+1 \in \llb \max\hat{C}_\ell+1 , n-1 \rrb$.
This in turn implies that $Y \subseteq \bigl\{\overline{0},\overline{1},\overline{\ell},\overline{\ell+1}\bigr\}$ for similar reasons. As a consequence,
\[
X+Y
\subseteq \bigl\{\overline{0},\overline{\ell},\overline{\ell+1}\bigr\}
+ \bigl\{\overline{0},\overline{1},\overline{\ell},\overline{\ell+1}\bigr\}
= \bigl\{ \overline{0}, \overline{1}, \overline{\ell}, \overline{\ell+1}, \overline{2\ell},\overline{2\ell+1}, \overline{2\ell+2} \bigr\}
\]
However, $\ell+1 < 2\ell \le n-1$, so we cannot have $\overline{2\ell}\in X+Y$. Then $2\ell+1 = n$, in which case $\overline{2\ell+1} = \overline{0}$ and $\overline{2\ell+2} = \overline{1}$; or $2\ell+1 < n$, so that $\overline{2\ell+1}, \overline{2\ell+2} \notin C_h$ (recall that $\ell \le (n-1)/2$). In either case, we get $X+Y \subseteq \bigl\{ \overline{0}, \overline{1}, \overline{\ell}, \overline{\ell+1}\bigr\}$, hence $|X+Y| \le 4 < |C_\ell|$, which is a contradiction and leads us to conclude that $C_\ell$ is an atom.
\end{proof}

Now that we have found large atoms in $\PPc_{\fin,0}(\ZZb/n\ZZb)$, we can explicitly give, for each $k\in \llb 2,n-1\rrb$, an element $X_k\in \PPc_{\fin,0}(\ZZb/n\ZZb)$ which has a (minimal) factorization of length $2$.

\begin{lemma}\label{2-atom-factorization}
Fix an odd integer $n\ge 5$ and let $k\in \llb 2, n-1 \rrb$.
Then the set $X_k = \{\overline{0},\overline{1},\dots, \overline{k}\}$ has an \textup{NR}-factorization into two atoms in $\PPc_{\fin,0}(\mathbf{Z}/n\mathbf{Z})$.
\end{lemma}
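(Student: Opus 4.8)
The plan is to recast the statement as one about sumsets of integer intervals. The first step is the observation that an NR-factorization of $X_k$ into two atoms of $\mathcal P_{\fin,0}(\mathbf Z/n\mathbf Z)$ is the same datum as a pair of subsets $S,T$ of the integer interval $\llb 0,k\rrb$ with $0\in S\cap T$, $S+T=\llb 0,k\rrb$ (sumset taken in $\mathbf Z$), and with the property that the residue sets $\overline S:=\{\bar s:s\in S\}$ and $\overline T$ are atoms. Indeed, since $k\le n-1$, every sum $s+t$ with $s\in S$ and $t\in T$ already lies in $\llb 0,k\rrb\subseteq\llb 0,n-1\rrb$, so reduction modulo $n$ transports $S+T=\llb 0,k\rrb$ faithfully to $\overline S+\overline T=X_k$; and as $S,T$ consist of canonical representatives we have $\widehat{\overline S}=S$, $\widehat{\overline T}=T$, whence $\max\widehat{\overline S}+\max\widehat{\overline T}=\max S+\max T=\max(S+T)=k=\max\widehat{X_k}$, i.e.\ $\overline S\ast\overline T$ is automatically an NR-factorization (and conversely every one arises this way). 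So it suffices to produce such a pair $(S,T)$ for each $k\in\llb 2,n-1\rrb$.

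I would then construct $(S,T)$ by an explicit case analysis on the parity of $k$ and on whether $k$ is ``small'' (roughly $k\le(n+1)/2$, so that one of $S,T$ may be taken to be a $2$-element set) or ``large''. The building bricks are the $2$-element atoms $\{\bar 0,\bar x\}$ with $\bar x\ne\bar 0$ (Lemma \ref{lem:2-elt-atoms}\ref{it:lem:2-elt-atoms(i)}) and the long atoms $B_h$, $C_\ell$ of Proposition \ref{large-atom-construction} (together with, for the larger values of $k$, variants of the $B_h$ of larger ``width'', whose irreducibility is checked by the same parity argument as in Proposition \ref{large-atom-construction}). The computational engine is a short list of sumset identities for arithmetic-progression-like sets, for instance
\[
\{0,1,3,\dots,h\}+\{0,1\}=\llb 0,h+1\rrb,\qquad\{0,1,3,\dots,h\}+\{0,2,3,4\}=\llb 0,h+4\rrb,
\]
which dispose of the small even and odd $k$, and, for odd $h\ge\ell\ge 5$,
\[
\widehat{B_h}+\widehat{C_\ell}=\llb 0,h+\ell+1\rrb,\qquad\widehat{C_h}+\widehat{C_\ell}=\llb 0,h+\ell+2\rrb,\qquad\widehat{C_3}+\widehat{C_\ell}=\llb 0,\ell+5\rrb,
\]
which push $k$ up towards $n-1$. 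Each identity is verified via the elementary fact that the sumset of two arithmetic progressions of common difference $2$ is again an arithmetic progression of difference $2$, so one only has to track which residues modulo $2$ occur, and over which ranges. It then remains to check that, as $h$ and $\ell$ range over the admissible odd values at most $(n-1)/2$, the resulting right endpoints cover all of $\llb 2,n-1\rrb$; this is where one separates out the residue of $n$ modulo $4$ and inspects the smallest odd $n$ directly.

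The hard part is the regime in which $k$ is close to $n-1$, so that $X_k$ almost fills $\mathbf Z/n\mathbf Z$ and both $S$ and $T$ are forced to have top element near $(n-1)/2$. Two complications conspire there. First, the sumset of two ``sparse'' long atoms (each essentially $\bar 0$ together with odd residues) leaves a parity gap inside $\llb 0,k\rrb$, which is why one must pair a $B_h$ with a $C_\ell$, or two $C$'s, rather than two $B$'s, and why the parity of $k$ decides the pairing. Second, being NR requires $\max S+\max T$ to equal $k$ \emph{exactly}, not $n$: a decomposition in which the two top elements sum to $n$ still yields $\overline S+\overline T=X_k$ but ``wraps around'' and is therefore not NR, so such near-misses have to be discarded and replaced. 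To handle whatever few values of $k$ near $n-1$ the families above miss, one may resort to a handful of ad hoc atoms such as $\{\bar 0,\bar 2,\bar 3\}$, whose irreducibility for $n\ge 7$ is immediate from the remark that a $3$-element subset $\{\bar 0,\bar x,\bar y\}$ of $\mathbf Z/n\mathbf Z$ admits a non-trivial factorization in $\mathcal P_{\fin,0}(\mathbf Z/n\mathbf Z)$ only when $\bar y=-\bar x$, $\bar y=2\bar x$, or $\bar x=2\bar y$.
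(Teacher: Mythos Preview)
Your plan is the paper's approach: lift to integer sumsets via the canonical representatives, pair the atoms $B_h$ and $C_\ell$ of Proposition~\ref{large-atom-construction}, and verify that the resulting interval endpoints sweep out $\llb 2,n-1\rrb$. The paper organises the case split more tightly than you propose: writing $k=2m+1$ or $k=2m$ and branching only on the parity of $m$, it assigns exactly one of the four pairs $(B_m,C_m)$, $(B_{m+1},C_{m-1})$, $(C_m,C_{m-2})$, $(C_{m-1},C_{m-1})$, with $k=2,4$ handled separately by $B_1+B_1$ and $B_1+B_3$. There is no small/large dichotomy, no ``variants of $B_h$ of larger width'', and no dependence on $n\bmod 4$; the key is simply that $\hat B_{h+2r}+\hat C_h=\llb 0,2h+2r+1\rrb$ and $\hat C_{h+2r}+\hat C_h=\llb 0,2h+2r+2\rrb$ for $r\in\{0,1\}$, which already parametrises every $k\ge 3$.

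That said, your instinct that a handful of values need patching is not paranoia. The paper's two identities above, stated for \emph{all} odd $h\ge 1$, actually fail whenever neither summand contains $\bar 1$: for instance $\hat B_3+\hat C_1=\{0,1,2,3,5\}$ and $\hat C_3+\hat C_3=\{0,2,3,4,5,6,7,8\}$, so the paper's prescriptions for $k=5$, $k=6$, and $k=8$ do not produce $X_k$. These glitches are easy to repair---$k=5$ is $B_1+C_3$, and for $k\in\{6,8\}$ with $n$ small one can use an auxiliary atom such as your $\{\bar 0,\bar 2,\bar 3\}$ (indeed $\{0,2,3\}+\hat B_3=\llb 0,6\rrb$)---but the repair is a short finite check, not the layered machinery you outline. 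So: same method, the paper's parametrisation is cleaner in the main regime, and your caution about residual cases is justified but should shrink to three explicit exceptions rather than a structural case analysis.
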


\begin{proof}
We will use the atoms $B_h$ and $C_\ell$ as defined in Proposition \ref{large-atom-construction}. We claim that, for every $r\in\{0,1\}$ and all odd $h\in \llb 1, (n-1)/2 \rrb$,
\[
\hat{B}_{h+2r}+\hat{C}_h = \llb 0, 2h+2r+1 \rrb
\quad\text{and}\quad
\hat{C}_{h+2r} + \hat{C}_{h} = \llb 0, 2r+2h+2\rrb.
\]
We will only demonstrate that $\hat{B}_h + \hat{C}_h = \llb 0, 2h+1 \rrb$ (the other cases are an easy consequence).
The claim is trivial if $h=1$ or $h=3$, so suppose $h \ge 5$.
Then
\[
\hat{B}_h + \hat{C}_h \supseteq \{1,3,\dots,h\} + \{0, h+1\} = \{1,3,\dots, 2h+1\}
\]
and
\[
\hat{B}_h + \hat{C}_h \supseteq \{1,3,\dots,h\} + \{1, h\} = \{2,4,\dots, 2h\},
\]
so $\hat{B}_h +\hat{C}_h \supseteq \llb 0 , 2h+1 \rrb$.
This gives that $\hat{B}_h +\hat{C}_h = \llb 0 , 2h+1 \rrb$, since $\max\hat{B}_h+\max\hat{C}_h = h+(h+1)$.

Accordingly, we now prove that $X_k$ can be expressed as a two-term sum involving $B_h$ and $C_\ell$, for some suitable choices of $h$ and $\ell$ depending on the parity of $k$.
\begin{enumerate}[leftmargin=1.8cm,label={\textsc{Case }\arabic{*}:}]
	\item $k = 2m+1$ (i.e., $k$ is odd). Then it is immediate to verify that $X_k = B_m+C_m$ if $m$ is odd, and $X_k = B_{m+1} +C_{m-1}$ if $m$ is even.
	\item $k = 2m$ (i.e., $k$ is even). Since $X_2 = B_1+B_1$ and $X_4 = B_1+B_3$, we may assume $m\ge3$. Then it is seen that $X_k = C_{m} + C_{m-2}$ if $m$ is odd, and $X_k = C_{m-1}+C_{m-1}$ if $m$ is even.
\end{enumerate}
We are left to show that the decompositions given above do in fact correspond to minimal factorizations. As an example, consider the case when $k=2m+1$ and $m$ is odd (the computation will be essentially identical in the other cases).
Then $\max\hat{B}_{m}+\max\hat{C}_m = 2m+1$, so that $B_m \ast C_m$ is an NR-factorization of $X_k$, and is hence minimal by Proposition \ref{NR-factorizations-are-minimal}.
\end{proof}

\begin{lemma}\label{lem:interval-minimal-length-sets}
Fix an odd integer $n\ge 3$ and, for each $k\in \llb2,n-1\rrb$, let $X_k := \{\overline{0},\overline{1}, \dots, \overline{k} \} \in  \PPc_{\fin,0}(\mathbf{Z}/n\mathbf{Z})$.
Then $\mathsf{L}^\m(X_k) = \llb 2,k \rrb$.
\end{lemma}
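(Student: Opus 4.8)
The plan is to sandwich $\mathsf{L}^\m(X_k)$ between the two obvious bounds and then to realize every value in $\llb 2,k\rrb$ by an explicit NR-factorization, so that minimality comes for free from Lemma~\ref{NR-factorizations-are-minimal}.

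First I would dispose of the inclusion $\emptyset\ne\mathsf{L}^\m(X_k)\subseteq\llb 2,k\rrb$ by cheap general arguments. Since $n$ is odd, no $x\in\mathbf Z/n\mathbf Z\setminus\{\overline 0\}$ satisfies $2x=\overline 0$ or $2x=x$, so $\PPc_{\fin,0}(\mathbf Z/n\mathbf Z)$ is atomic by Theorem~\ref{th:atomicity}; hence $\ZZc(X_k)\ne\emptyset$, and then $\ZZc^\m(X_k)\ne\emptyset$ by Proposition~\ref{prop:min-basics}\ref{it:prop:min-basics(ii)}. Because $|X_k|=k+1$, Proposition~\ref{prop:bounded-minimal-fzn}\ref{it:prop:bounded-minimal-fzn(i)} bounds every minimal factorization of $X_k$ by length $k$. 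Finally, $X_k$ is neither the identity nor an atom of $\PPc_{\fin,0}(\mathbf Z/n\mathbf Z)$: indeed $X_k=\{\overline 0,\overline 1\}+\{\overline 0,\overline 1,\dots,\overline{k-1}\}$ (using $k\le n-1$), with both factors non-units since each has at least two elements ($k\ge 2$); so $X_k$ has no factorization into $0$ or $1$ atoms, and together with the length bound this gives $\mathsf L^\m(X_k)\subseteq\llb 2,k\rrb$.

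For the reverse inclusion I would fix $j\in\llb 2,k\rrb$, set $\ell:=k-j+2\in\llb 2,k\rrb\subseteq\llb 2,n-1\rrb$, and produce a two-atom NR-factorization $E_1\ast E_2$ of $X_\ell$: if $\ell=2$, take $E_1:=E_2:=\{\overline 0,\overline 1\}$, which is an atom by Lemma~\ref{lem:2-elt-atoms}\ref{it:lem:2-elt-atoms(i)} (as $2\cdot\overline 1=\overline 2\notin\{\overline 0,\overline 1\}$, since $n\ge 3$) and satisfies $\max\hat{E}_1+\max\hat{E}_2=2=\max\hat{X}_2$; if $\ell\ge 3$, then $\ell\ge 3$ forces $k\ge j+1\ge 3$, hence $n\ge 5$, and Lemma~\ref{2-atom-factorization} supplies such an $E_1\ast E_2$. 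Now I would consider the word
\[
\mathfrak a:=\{\overline 0,\overline 1\}^{\ast(j-2)}\ast E_1\ast E_2 .
\]
Lifting to $\mathbf Z$, the $\pi$-image of $\{\overline 0,\overline 1\}^{\ast(j-2)}$ is $\{\overline 0,\overline 1,\dots,\overline{j-2}\}$ (as $j-2\le n-1$), so $\pi(\mathfrak a)$ is the image in $\mathbf Z/n\mathbf Z$ of $\{0,\dots,j-2\}+\{0,\dots,\ell\}=\{0,\dots,k\}$, i.e.\ $\pi(\mathfrak a)=X_k$ (again using $k\le n-1$); thus $\mathfrak a\in\ZZc(X_k)$ and $\|\mathfrak a\|_{\PPc_{\fin,0}(\mathbf Z/n\mathbf Z)}=(j-2)+2=j$. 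Moreover the sum of the integer maxima of the atoms of $\mathfrak a$ is $(j-2)\cdot 1+(\max\hat{E}_1+\max\hat{E}_2)=(j-2)+\ell=k=\max\hat{X}_k$, so $\mathfrak a$ is an NR-factorization and hence minimal by Lemma~\ref{NR-factorizations-are-minimal}; therefore $j\in\mathsf L^\m(X_k)$. Since $j$ was arbitrary, $\llb 2,k\rrb\subseteq\mathsf L^\m(X_k)$, and the two inclusions give $\mathsf L^\m(X_k)=\llb 2,k\rrb$.

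The only genuine idea is the padding in the definition of $\mathfrak a$: because the NR-condition is additive in the integer maxima of the atoms, one may lengthen a two-atom NR-factorization of the shorter interval $X_\ell$ by inserting copies of the unit-step atom $\{\overline 0,\overline 1\}$ (which is vacuously NR) to hit any target length in $\llb 2,k\rrb$, while the corresponding sumsets of integer intervals telescope back to $\{0,\dots,k\}$. I do not expect any real obstacle beyond bookkeeping the degenerate values $\ell=2$ (whose direct choice $E_i=\{\overline 0,\overline 1\}$ also covers the case $n=3$, where Lemma~\ref{2-atom-factorization} does not apply) and $j=2$ (where $\mathfrak a$ is just the two-atom factorization already given by Lemma~\ref{2-atom-factorization}).
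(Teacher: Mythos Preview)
Your proof is correct and follows essentially the same approach as the paper: both obtain $\llb 2,k\rrb\subseteq\mathsf L^\m(X_k)$ by padding a two-atom NR-factorization of a shorter interval with copies of $\{\overline 0,\overline 1\}$ (the paper does this by induction on $k$, you write down the length-$j$ word directly) and invoke Lemma~\ref{NR-factorizations-are-minimal} for minimality, and both get the reverse inclusion from Proposition~\ref{prop:bounded-minimal-fzn}\ref{it:prop:bounded-minimal-fzn(i)}. Your version is in fact slightly more careful, since you explicitly rule out lengths $0$ and $1$ and handle the boundary case $n=3$ (where Lemma~\ref{2-atom-factorization} does not apply) via the ad hoc choice $E_1=E_2=\{\overline 0,\overline 1\}$.
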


\begin{proof}
We have already established in Lemma \ref{2-atom-factorization} that $X_2$ has an NR-factorization of length $2$.
Now fix $k \in \llb 3, n-1 \rrb$ and suppose that, for all $h \in \llb 2, k-1 \rrb$ and $\ell \in \llb 2,h \rrb$, $X_h$ has an NR-factorization of length $\ell$.
Choose some $\ell \in \llb 2,k-1 \rrb$; $X_{k-1}$ has an NR-factorization $\mathfrak{a}$, and it is straightforward to see that $\{\overline{0},\overline{1}\}*\mathfrak{a}$ is an NR-factorization of $X_k$.
Letting $\ell$ range over $\llb 2,k-1\rrb$, this argument, Lemma \ref{NR-factorizations-are-minimal}, and Lemma \ref{2-atom-factorization} imply that $\mathsf{L}^\m(X_k) \supseteq \llb 2, k \rrb$.
Moreover, Proposition \ref{prop:bounded-minimal-fzn}\ref{it:prop:bounded-minimal-fzn(i)} yields the other inclusion and so we have  $\mathsf{L}^\m(X_k) = \llb 2,k \rrb$.
\end{proof}

\begin{lemma}\label{prop:intervals-in-N}
Let $H$ be a non-torsion monoid.
Then $\mathscr{L}(\PPc_{\fin,0}(\NNb)) \subseteq \mathscr{L}^\m(\PPc_\fun(H))$, and for every $k\ge 2$ there exists $Y_k\in \PPc_\fun(H)$ with $\mathsf{L}^\m(Y_k) = \llb 2, k \rrb$.
\end{lemma}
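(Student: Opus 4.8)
The plan is to realize $(\NNb,+)$ as a submonoid of $H$ and to transport factorization data through the power-monoid construction; the second assertion then reduces to a realization statement inside $\PPc_{\fin,0}(\NNb)$ which is available from \cite{FTr17}.

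First I would use that $H$ is non-torsion to fix an element $x \in H$ with $\ord_H(x) = \infty$, and set $K := \{x^k : k \in \NNb\}$. Since all powers of $x$ are distinct, $k \mapsto x^k$ is a monoid isomorphism $(\NNb,+) \to K$, and hence $S \mapsto \{x^k : k \in S\}$ is a monoid isomorphism $\PPc_{\fin,0}(\NNb) \to \PPc_\fun(K)$; as isomorphisms preserve atoms and the relation $\preceq$, hence minimal factorizations and their lengths, this gives $\LLc^\m(\PPc_{\fin,0}(\NNb)) = \LLc^\m(\PPc_\fun(K))$. By Proposition \ref{prop:pm-arith}\ref{it:prop:pm-arith(iii)}, $\PPc_\fun(K)$ is a divisor-closed submonoid of $\PPc_\fun(H)$, so Proposition \ref{prop:min-basics}\ref{it:prop:min-basics(iii)} yields $\mathsf{L}^\m_{\PPc_\fun(K)}(X) = \mathsf{L}^\m_{\PPc_\fun(H)}(X)$ for every $X \in \PPc_\fun(K)$, whence $\LLc^\m(\PPc_{\fin,0}(\NNb)) = \LLc^\m(\PPc_\fun(K)) \subseteq \LLc^\m(\PPc_\fun(H))$. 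Finally, $\PPc_{\fin,0}(\NNb)$ is commutative and, being reduced (Proposition \ref{prop:pm-arith}\ref{it:prop:pm-arith(iv)}) with $\max(X+Y) = \max X + \max Y > \max X$ whenever $Y \ne \{0\}$, also unit-cancellative; so Proposition \ref{prop:min-basics}\ref{it:prop:min-basics(iv)} gives $\LLc^\m(\PPc_{\fin,0}(\NNb)) = \mathscr{L}(\PPc_{\fin,0}(\NNb))$, and the first assertion $\mathscr{L}(\PPc_{\fin,0}(\NNb)) \subseteq \LLc^\m(\PPc_\fun(H))$ follows.

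For the second assertion, for each $k \ge 2$ I would take $X_k := \llb 0, k\rrb \in \PPc_{\fin,0}(\NNb)$ and $Y_k := \{x^j : j \in X_k\} \in \PPc_\fun(H)$. By the identifications above, $\mathsf{L}^\m_{\PPc_\fun(H)}(Y_k) = \mathsf{L}^\m_{\PPc_{\fin,0}(\NNb)}(X_k) = \mathsf{L}_{\PPc_{\fin,0}(\NNb)}(X_k)$, so it suffices to show $\mathsf{L}_{\PPc_{\fin,0}(\NNb)}(X_k) = \llb 2, k\rrb$. The inclusion $\subseteq$ is immediate: in any factorization $X_k = A_1 + \cdots + A_\ell$ into non-units each $\max A_i \ge 1$, hence $k = \max X_k = \sum_i \max A_i \ge \ell$, while $\ell \ge 2$ because $X_k = \{0,1\} + \llb 0, k-1\rrb$ is not an atom. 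For the reverse inclusion one produces a factorization of $X_k$ of each length $\ell \in \llb 2, k\rrb$: length $k$ from $\{0,1\}^{\ast k}$, lengths $3,\dots,k$ by prepending $\{0,1\}$ to a factorization of $X_{k-1}$ (induction on $k$), and length $2$ from a two-atom factorization of $X_k$, which is where the large atoms of Proposition \ref{large-atom-construction} enter. This is carried out in \cite[\S 4]{FTr17}; alternatively, replacing the residue classes by their non-negative integer representatives turns the atoms $B_h, C_\ell$ and the factorization identities of Lemmas \ref{2-atom-factorization} and \ref{lem:interval-minimal-length-sets} into literal statements about $\PPc_{\fin,0}(\NNb)$ whose (easier) proofs go through verbatim, the NR-condition — which there served only to prevent wrap-around — now holding automatically.

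The only step that is not pure bookkeeping is the last one, namely actually producing a two-atom (and hence, by the inductive trick, every) minimal factorization of some element of $\PPc_{\fin,0}(\NNb)$; and even this is already settled, either by citing \cite{FTr17} or by porting the constructions of the present section, where dropping the modular arithmetic only simplifies the argument. Everything else is formal manipulation of divisor-closed submonoids, monoid isomorphisms, and unit-cancellativity.
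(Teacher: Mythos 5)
Your proposal is correct and follows essentially the same route as the paper: fix an infinite-order element, identify $\PPc_{\fin,0}(\NNb)$ with $\PPc_\fun(\gen{x}_H)$, pass through the divisor-closed submonoid via Proposition \ref{prop:min-basics}\ref{it:prop:min-basics(iii)}--\ref{it:prop:min-basics(iv)}, and invoke \cite[\S{ }4, Proposition 4.8]{FTr17} for the realization of the intervals $\llb 2,k\rrb$ in $\PPc_{\fin,0}(\NNb)$. The extra details you supply (the unit-cancellativity check via maxima and the sketch of the interval realization) are sound but only flesh out steps the paper delegates to the cited reference.
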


\begin{proof}
Suppose that $y\in H$ has infinite order, and set $Y := \{y^k: k \in \mathbf N\}$. Clearly, $Y$ is a submonoid of $H$, and the (monoid) homomorphism $(\NNb,+) \to Y: k \mapsto y^k$ determined by sending $1$ to $y$ induces an iso\-morphism $\PPc_{\fin,0}(\NNb) \to \PPc_\fun(Y)$.
Since, by Proposition \ref{prop:pm-arith}\ref{it:prop:pm-arith(iii)}, $\PPc_\fun(Y)$ is a divisor-closed submonoid of $\PPc_\fun(H)$, we thus have by parts \ref{it:prop:min-basics(iii)} and \ref{it:prop:min-basics(iv)} of Proposition \ref{prop:min-basics} that
\[
\mathscr{L}(\PPc_{\fin,0}(\NNb)) = \mathscr{L}^\m(\PPc_{\fin,0}(\NNb)) = \mathscr{L}^\m(\PPc_\fun(Y)) \subseteq \mathscr{L}^\m(\PPc_\fun(H)).
\]
The rest of the statement now follows from the above and \cite[Proposition 4.8]{FTr17}.
\end{proof}

\begin{theorem}\label{th:interval-lengths}
Assume $H$ is a monoid such that $1_H \neq x^2 \neq x$ for all $x\in H\setminus\{1_H\}$, and set $N := \sup\{\ord_H(x) : x\in H \}$.
Then $\llb 2, k \rrb \in \mathscr{L}^\m(\PPc_\fun(H))$ for every $k \in \llb 2, N-1 \rrb$.
\end{theorem}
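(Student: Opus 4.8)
The plan is to split on whether $H$ is torsion and, in each case, to realize $\llb 2, k \rrb$ as a set of minimal lengths inside a divisor-closed submonoid of $\PPc_\fun(H)$ of an already-understood type (the reduced power monoid of a finite cyclic group, or that of $(\mathbf{N},+)$), and then to pull the conclusion back to $\PPc_\fun(H)$ via Proposition \ref{prop:min-basics}\ref{it:prop:min-basics(iii)}.

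First I would dispose of the case where $H$ is not torsion: then some element of $H$ has infinite order, so $N = \infty$ and $\llb 2, N-1 \rrb = \{k \in \mathbf{N} : k \ge 2\}$, and the statement follows immediately from Lemma \ref{prop:intervals-in-N}, which produces for each $k \ge 2$ a set $Y_k \in \PPc_\fun(H)$ with $\mathsf{L}^\m(Y_k) = \llb 2, k \rrb$.

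So assume $H$ is torsion and fix $k \in \llb 2, N-1 \rrb$ (there is nothing to prove if $N \le 2$). I would first record that every element of $H$ of finite order has \emph{odd} order: by Lemma \ref{lem:no-non-id-elts-of-small-order-implies-structure}\ref{it:lem:no-non-id-elts-of-small-order-implies-structure(ii)} such an element generates a cyclic group of order $\ge 3$, and were that order even the group would contain a non-identity square root of $1_H$, contradicting the hypothesis on $H$. Since $k+1 \le N = \sup\{\ord_H(x) : x \in H\}$, and this supremum is attained when finite and the set of orders is unbounded otherwise, I would pick $x \in H$ with $n := \ord_H(x) \ge k+1$; then $n$ is finite, odd, and $\ge 3$, and by the same part of Lemma \ref{lem:no-non-id-elts-of-small-order-implies-structure} $\gen{x}_H$ is a cyclic group of order $n$. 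Hence $x^n = 1_H$, the assignment $\overline{j} \mapsto x^j$ is a group isomorphism $(\mathbf{Z}/n\mathbf{Z},+) \to \gen{x}_H$, and it induces a monoid isomorphism $\PPc_{\fin,0}(\mathbf{Z}/n\mathbf{Z}) \to \PPc_\fun(\gen{x}_H)$ carrying $X_k := \{\overline{0}, \overline{1}, \dots, \overline{k}\}$ to $\widetilde X_k := \{1_H, x, \dots, x^k\}$.

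To conclude, I would invoke Proposition \ref{prop:pm-arith}\ref{it:prop:pm-arith(iii)} to see that $\PPc_\fun(\gen{x}_H)$ is a divisor-closed submonoid of $\PPc_\fun(H)$, so that Proposition \ref{prop:min-basics}\ref{it:prop:min-basics(iii)} gives $\mathsf{L}^\m_{\PPc_\fun(H)}(\widetilde X_k) = \mathsf{L}^\m_{\PPc_\fun(\gen{x}_H)}(\widetilde X_k)$; transporting along the isomorphism, this equals $\mathsf{L}^\m_{\PPc_{\fin,0}(\mathbf{Z}/n\mathbf{Z})}(X_k)$, which is $\llb 2, k \rrb$ by Lemma \ref{lem:interval-minimal-length-sets} (applicable since $n \ge 3$ is odd and $2 \le k \le n-1$). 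Thus $\llb 2, k \rrb \in \mathscr{L}^\m(\PPc_\fun(H))$, as wanted. I expect no real obstacle here: almost everything is bookkeeping, the one genuinely new point being the oddness of finite orders, while the substantive work lives in the already-established Lemmas \ref{lem:interval-minimal-length-sets} and \ref{prop:intervals-in-N} (the Cauchy--Davenport/Vosper analysis in $\mathbf{Z}/n\mathbf{Z}$ and the $(\mathbf{N},+)$ case from \cite{FTr17}).
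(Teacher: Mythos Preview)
Your proposal is correct and follows essentially the same approach as the paper's proof: split on whether $H$ is torsion, invoke Lemma \ref{prop:intervals-in-N} in the non-torsion case, and in the torsion case pass to the divisor-closed submonoid $\PPc_\fun(\gen{x}_H)$ and apply Lemma \ref{lem:interval-minimal-length-sets} together with Propositions \ref{prop:pm-arith}\ref{it:prop:pm-arith(iii)} and \ref{prop:min-basics}\ref{it:prop:min-basics(iii)}. In fact you are slightly more careful than the paper, since you explicitly justify that $n = \ord_H(x)$ is odd (which is needed for Lemma \ref{lem:interval-minimal-length-sets} but left implicit in the paper's argument).
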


\begin{proof}
If $H$ is non-torsion, this follows immediately from Lemma \ref{prop:intervals-in-N}.
Otherwise, let $k\in \llb 2, N-1 \rrb$ and $y\in H$ with $n := \ord_H(x) > k$.
Then $Y :=\gen{y}_H \cong \ZZb/n\ZZb$, so we have by Proposition \ref{prop:pm-arith}\ref{it:prop:pm-arith(iii)}, Lemma \ref{lem:interval-minimal-length-sets}, and Proposition \ref{prop:min-basics}\ref{it:prop:min-basics(iii)} that
$\llb 2, k \rrb \in \mathscr{L}^\m(\PPc_\fun(Y)) \subseteq \mathscr{L}^\m(\PPc_\fun(H))$.
\end{proof}
\section{Closing remarks}
This preliminary foray into minimal factorizations raises several questions. In particular, to what extent can finite subsets of $\NNb_{\ge 2}$ be realized as sets of minimal lengths of some power monoid?
Which families of subsets can be simultaneously realized by a single power monoid?

There are also questions beyond sets of lengths that can be addressed; we should expect to be able to formulate and study the ``minimal versions'' of other invariants commonly considered in Factorization Theory (unions of length sets, sets of distances, elasticities, catenary and tame degrees, etc.).
The results we have seen in the present paper suggest that the study of these types of invariants in power monoids is almost never trivial.

\section*{Acknowledgments}
We are grateful to Qinghai Zhong for helping with the proof of Theorem \ref{thm:BF-torsion}\ref{it:thm:BF-torsion(i)}; to Alfred Geroldinger for several productive discussions; and to an anonymous reviewer for comments that pushed us to improve this manuscript.
The first-named author also wishes to thank the OSU Department of Mathematics for the opportunity to travel for an extended period, and everyone at the  Institute for Mathematics and Scientific Computing at University of Graz for being gracious hosts.

\end{document}